\tikzstyle{empty}=[circle,draw=black!80,thick]
\tikzstyle{emptyn}=[circle,draw=black!80,fill=white,scale=0.5] 
\tikzstyle{nero}=[circle,draw=black!80,fill=black!80,thick]
\newcommand{\fS}{{\mathfrak{S}}}
\newcommand{\Irr}{{\operatorname{Irr}}}
\newtheorem{thm}{Theorem}[section]
\newtheorem{lem}[thm]{Lemma}
\newtheorem{cor}[thm]{Corollary}
\newtheorem{prop}[thm]{Proposition}
\newtheorem*{thmA}{Theorem A}
\newtheorem*{thmB}{Theorem B}
\newtheorem*{thmC}{Theorem C}
\newtheorem*{thmD}{Theorem D}
\newtheorem*{thm1}{Theorem 1}
\theoremstyle{definition}
\newtheorem{defn}[thm]{Definition}
\newtheorem{notation}[thm]{Notation}
\newtheorem{rem}[thm]{Remark}
\begin{document}

\title{On the $p'$-subgraph of the Young graph}


\author{Eugenio Giannelli}
\address[E. Giannelli]{Department of Pure Mathematics and Mathematical Statistics, University of Cambridge, Cambridge CB3 0WA, UK}
\email{eg513@cam.ac.uk}

\author{Stacey Law}
\address[S. Law]{Department of Pure Mathematics and Mathematical Statistics, University of Cambridge, Cambridge CB3 0WA, UK}
\email{swcl2@cam.ac.uk}

\author{Stuart Martin}
\address[S. Martin]{Department of Pure Mathematics and Mathematical Statistics, University of Cambridge, Cambridge CB3 0WA, UK}
\email{sm137@cam.ac.uk}

\thanks{The first author's research was funded by Trinity Hall, University of Cambridge.}
 
\begin{abstract}
Let $p$ be a prime number. In this article we study the restriction to $\fS_{n-1}$ of irreducible characters of degree coprime to $p$ of $\fS_n$. In particular, we study the combinatorial properties of the subgraph $\mathbb{Y}_{p'}$ of the Young graph $\mathbb{Y}$. This is an extension to odd primes of the work done in \cite{APS} for $p=2$.
\end{abstract}

\keywords{}


\maketitle

\section{Introduction}\label{sec:intro}

Let $\mathcal{P}$ denote the set of partitions of natural numbers. For $\lambda$ a partition of $n$ and $\mu\in\mathcal{P}$ we let $(\lambda, \mu)\in\mathcal{E}$ if and only if $\chi^\mu$ is an irreducible constituent of $(\chi^\lambda)_{\fS_{n-1}}$. Here we denoted by $\chi^\lambda$ the ordinary irreducible character of the symmetric group $\fS_n$ naturally labelled by $\lambda$ (this notation will be kept throughout the article). 
The Young graph $\mathbb{Y}$ has $\mathcal{P}$ as its set of vertices and $\mathcal{E}$ as its set of edges. 

In the representation theory of symmetric groups, the study of the Young graph has proved a fruitful tool in the modern development of the subject.
For example, the recent approach to this area first presented in \cite{OV} (see also \cite[Chapter 2]{KBook}) derives the entire representation theory of symmetric groups from the combinatorial properties of $\mathbb{Y}$.
It is somewhat surprising that only very recently in \cite{APS}, the following remarkable fact was shown to hold. 
\begin{thm1}[Unique Parent Theorem in \cite{APS}]\label{sn} 
Let $n\in\mathbb{N}$ and let $\chi$ be an irreducible character of odd degree of $\fS_n$. Then the restriction $\chi_{\fS_{n-1}}$ has a unique irreducible constituent of odd degree.
\end{thm1}

Let $p$ be a prime number and let $\mathbb{Y}_{p'}$ be the induced subgraph of $\mathbb{Y}$ on those vertices (partitions) labelling irreducible characters of degree coprime to $p$. Theorem~\ref{sn} shows that $\mathbb{Y}_{2'}$ is a rooted tree. 
Starting from this beautiful observation, the rest of \cite{APS} is devoted to describing the \textit{highly} regular combinatorial structure of $\mathbb{Y}_{2'}$. 
We remark that the relevance of \cite{APS} transcends the study of the Young graph. In fact, Theorem~\ref{sn} was recently used to construct several types of character correspondences (see \cite{BGO}, \cite{GKNT} and \cite{INOT}).

\medskip

The main aim of this paper is to study the combinatorial structure of $\mathbb{Y}_{p'}$ for any odd prime $p$. As remarked in \cite[Section 7]{APS}, Theorem 1 is false for odd primes and $\mathbb{Y}_{p'}$ is never a tree for $p\geq 3$.
Yet notably, given \emph{any} prime $p$ and any irreducible character $\chi$ of degree coprime to $p$ of $\fS_n$, Theorem A (below) describes the number of irreducible constituents of degree coprime to $p$ of $\chi_{\fS_{n-1}}$. In particular, this extends Theorem 1 to all primes.

For $\lambda\in\mathcal{P}(n)$ we denote by $\lambda^-_{p'}$ the subset of $\mathcal{P}(n-1)$ consisting of all partitions $\mu$ such that $\chi^\mu$ is an irreducible constituent of degree coprime to $p$ of $(\chi^\lambda)_{\fS_{n-1}}$. Moreover we define $\mathcal{E}_n$ to be the set $$\mathcal{E}_n=\left\{|\lambda^-_{p'}|\ :\ \lambda\vdash n\ \ \text{and}\ \ p\nmid\chi^\lambda(1)\right\},$$
and we let $br(n)$ be the maximal value in $\mathcal{E}_n$ (i.e. $br(n)=\mathrm{max}\{|\lambda^-_{p'}|\ :\ \lambda\vdash n\ \ \text{and}\ \ p\nmid\chi^\lambda(1)\}$).
Our first result describes $\mathcal{E}_n$ and gives a recursive formula for the exact value of $br(n)$.

\begin{thmA}
Let $n\in\mathbb{N}$ and let $p$ be a prime.  Let $n=\sum_{j=1}^t a_jp^{n_j}$ be the $p$-adic expansion of $n$, for some $0\leq n_1<n_2<\cdots <n_t$.
Then $\mathcal{E}_n=\{1,2,\ldots, br(n)-1, br(n)\}$ and $$br(n)=br(a_1p^{n_1})+\sum_{j=2}^t\Phi(a_j, br(m_j))$$
where $m_j=\sum_{i=1}^{j-1} a_ip^{n_i}$, and where $\Phi$ is the function described explicitly in Definition~\ref{def:PhiPsi} below.
\end{thmA}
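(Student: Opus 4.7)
The plan is to induct on the length $t$ of the $p$-adic expansion of $n$. The base case $t = 1$ deals with $n = a p^m$ for some $1 \le a < p$, which can be handled by a direct analysis of the abacus displays (or $p$-quotients) of $p'$-partitions of $\fS_{ap^m}$. The inductive step exploits the natural block structure of $p'$-characters coming from the $p$-adic expansion of $n$.

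To set up, I would first translate the statement into purely combinatorial terms. By the branching rule, $(\chi^\lambda)_{\fS_{n-1}} = \sum_A \chi^{\lambda \setminus A}$ where $A$ runs over removable nodes of $\lambda$, so $|\lambda^-_{p'}|$ equals the number of removable nodes of $\lambda$ whose removal produces a partition labelling a $p'$-character of $\fS_{n-1}$. Using a standard criterion (via hook lengths, or equivalently via Macdonald's abacus/$p$-core characterisation) for a character of $\fS_n$ to be of $p'$-degree, the problem reduces to a counting question entirely in the combinatorics of $p'$-partitions.

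The central tool will then be the Macdonald-style decomposition: writing $n = \sum_{j=1}^t a_j p^{n_j}$, the $p'$-partitions of $n$ are parametrised by tuples $(\lambda^{(1)}, \ldots, \lambda^{(t)})$ with each $\lambda^{(j)}$ a $p'$-partition of $a_j p^{n_j}$. The key lemma to prove will describe how removable nodes of $\lambda$ translate, under this parametrisation, into modifications of a single component $\lambda^{(j)}$: removing a node affecting only $\lambda^{(1)}$ is relatively clean (since $n-1$ and $n$ share all $p$-adic digits above position $n_1$) and gives the $br(a_1 p^{n_1})$ summand; removing a node affecting some higher $\lambda^{(j)}$ with $j \ge 2$ forces a $p$-adic carry and a non-local reorganisation of the block decomposition of $\lambda \setminus A$, whose count is governed by the pair $(a_j, br(m_j))$ and yields the corresponding $\Phi$ term.

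Granted this lemma, the recursion for $br(n)$ follows by choosing each component $\lambda^{(j)}$ independently to maximise its contribution. The consecutivity $\mathcal{E}_n = \{1, 2, \ldots, br(n)\}$ is then established by induction: given any $1 \le k \le br(n)$, decompose $k$ additively along the recursion and invoke the inductive hypotheses on $\mathcal{E}_{a_j p^{n_j}}$ and $\mathcal{E}_{m_j}$ to realise the required intermediate values in each component. The main obstacle, in my view, will be the bijective combinatorial analysis of removable nodes in the presence of $p$-adic carries, where the block decomposition of $\lambda \setminus A$ is not a local modification of that of $\lambda$; designing $\Phi$ so that it counts these contributions uniformly in $j$, and verifying that every valid removable node is accounted for exactly once, is the technical heart of the proof.
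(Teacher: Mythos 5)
Your skeleton --- induction on the $p$-adic length $t$, with $n=ap^k$ as the base case and a reduction exploiting the $p$-adic block structure of $p'$-partitions --- matches the paper's, and you are right that everything hinges on one combinatorial lemma. But that lemma, which you defer as ``the technical heart'', is essentially the whole content of the theorem, and the picture you sketch for it does not work. You propose to sort the removable nodes of $\lambda$ according to which tuple component $\lambda^{(j)}$ they ``affect'', with the $br(a_1p^{n_1})$ summand coming from nodes touching only $\lambda^{(1)}$ and each $\Phi(a_j,br(m_j))$ coming from nodes touching $\lambda^{(j)}$. No such dichotomy exists: writing $n=a_tp^{n_t}+m_t$ and $\gamma=C_{p^{n_t}}(\lambda)$, \emph{every} $\alpha\in\lambda^-_{p'}$ satisfies $C_{p^{n_t}}(\alpha)\in\gamma^-_{p'}$ (Theorem~\ref{thm:coremap}), so every good node removal simultaneously decrements the core and hence ``affects'' all lower levels at once; the removable nodes cannot be partitioned by level. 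Relatedly, the carry in passing from $n$ to $n-1$ only disturbs the digits in positions $\le n_1$ and never touches $a_j$ for $j\ge 2$, so ``removing a node affecting $\lambda^{(j)}$ forces a $p$-adic carry'' does not describe any actual class of nodes.

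The decomposition that yields the formula is different. One peels off the \emph{leading} digit and proves that the core map $C_{p^{n_t}}\colon\lambda^-_{p'}\to\gamma^-_{p'}$ is well-defined and surjective; on the abacus this gives $|\lambda^-_{p'}|=\sum_{j\in\mathcal{R}_{A_\gamma}}\mathrm{Rem}(B_j)$ (Corollary~\ref{cor:removable beads}), a sum over the $L=|\gamma^-_{p'}|$ good runner positions of the core, where each summand is $1$ plus an excess. A $2$-abacus computation (Lemma~\ref{lem:T}) shows that the excess on a pair of runners carrying local weight $a_i$ is at most $f(a_i)$ with $f(x)=\max\{y\in\mathbb{N}_0 : y(y+1)\le x\}$, and that this is attained; maximizing over distributions $a_1+\cdots+a_L\le a_t$ gives exactly $br(n)=br(m_t)+\Phi(a_t,br(m_t))$ (Proposition~\ref{prop:br_m}), and iterating gives the stated recursion. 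Nothing in your outline produces this specific function $f$, nor the surjectivity that guarantees the base term $br(m_t)$ is actually attained rather than merely an upper bound on the core's contribution. For consecutivity of $\mathcal{E}_n$ you additionally need that every value $1,\dotsc,f(x)+1$ of $\mathrm{Rem}$ is realizable on such a $2$-abacus (Lemma~\ref{lem:T2}), together with swapping $\gamma$ for cores $\gamma(i)$ with $|\gamma(i)^-_{p'}|=i$ to reach the values below $br(m_t)$; your plan to ``decompose $k$ along the recursion'' is the right instinct but rests entirely on the unproven lemma. As it stands the proposal is a plausible strategy outline rather than a proof.
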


In Section~\ref{sec:apk} we determine exactly $br(ap^k)$ for any prime $p$, any $k\in\mathbb{N}_0$ and any $a\in\{1,\ldots, p-1\}$. The following result serves as the base case for computing $br(n)$ for any natural number $n$, using the recursive expression given in Theorem A.

\begin{thmB}
Let $p$ be an odd prime, $k\in\mathbb{N}_0$ and $a\in\{1,\dotsc,p-1\}$. 
Then $$br(ap^k)= \begin{cases}
f(2a) & \mathrm{if}\ k=0,\\
p-1 + 2\lfloor \frac{2a-(p-1)}{6}\rfloor & \mathrm{if}\ k=1\ \mathrm{and}\ \frac{p}{2}<a<p,\\
2a & \mathrm{otherwise}.
\end{cases}$$
Here $f(x)=\mathrm{max}\{y\in\mathbb{N}_0\ |\ y(y+1)\leq x\}.$
\end{thmB}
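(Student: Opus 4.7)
The proof of Theorem B naturally splits into three cases matching the piecewise formula, treated in increasing order of difficulty.

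\textbf{Case $k = 0$.} Since $a < p$, every irreducible character of $\fS_a$ and of $\fS_{a-1}$ has degree coprime to $p$, so $|\lambda^-_{p'}|$ equals the number of removable boxes of $\lambda$, which is the number of distinct parts of $\lambda$. The maximum number of distinct parts in a partition of $a$ is the largest $r$ with $r(r+1)/2 \leq a$, namely $f(2a)$, realised by the staircase $(r, r-1, \ldots, 1)$ padded in its first row to reach exactly $a$ boxes.

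\textbf{Setup for $k \geq 1$.} I would invoke the classical characterisation (Macdonald) of $p'$-degree irreducible characters of $\fS_n$ via the $p$-core tower: $p \nmid \chi^\lambda(1)$ iff the tower encodes the base-$p$ expansion of $n$ level by level. For $n = ap^k$ with $a < p$, any $p'$-degree $\lambda$ must have empty $p$-core and a $p$-quotient $(\lambda^{(0)}, \ldots, \lambda^{(p-1)})$ of $p'$-degree partitions with $\sum_i |\lambda^{(i)}| = ap^{k-1}$; analogously, any $p'$-degree $\mu \vdash ap^k - 1$ must have $p$-core of size $p - 1$. Removing a box from $\lambda$ corresponds on the $p$-abacus to a single bead moving from some runner $j$ to runner $j - 1$ modulo $p$ (with a descent in abacus height at the wraparound $j=0$), and the problem reduces to counting those bead moves that yield a $\mu$ satisfying the $p$-core tower conditions for $ap^k - 1$.

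\textbf{Cases $k \geq 2$, or $k = 1$ with $a \leq p/2$.} To establish $br(ap^k) = 2a$, I would construct an explicit $\lambda$ whose $p$-quotient concentrates its $ap^{k-1}$ boxes onto two adjacent runners in such a way that each of the $a$ mass-units contributes two good bead moves (one outgoing, one incoming across the adjacent pair), yielding $|\lambda^-_{p'}| = 2a$. For the matching upper bound, the key observation is that each good bead move transfers one unit of mass from the $p$-quotient to the growing $p$-core of size $p-1$; hence at most $2a$ good moves can be available, given the quotient budget and the constraint that $\mu$'s $p$-core have the prescribed size.

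\textbf{Case $k = 1$ and $p/2 < a < p$.} This is the delicate case: since $a > p/2$, the $p$-quotient cannot be spread over the $p$ runners without forced collisions that block good bead moves. The baseline $p - 1$ comes from a maximally spread configuration yielding one good bead move per active runner, with one lost to the wraparound at runner $0$; the further gain $2\lfloor(2a-(p-1))/6\rfloor$ is obtained by packing triples of quotient boxes into $(2,1)$-staircases on individual runners, each triple purchasing two extra good bead moves at a cost of three quotient boxes, which accounts precisely for the ratio $2/6$ in the floor. The principal obstacle will be this final case, which requires a careful combinatorial optimisation: one must exhaust all permissible $p$-quotient configurations to match the upper bound, construct the matching extremal $\lambda$, and carefully handle the asymmetric wraparound behaviour at runner $0$ on the $p$-abacus.
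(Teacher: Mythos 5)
Your case $k=0$ is correct and matches the paper's argument, and your overall architecture (explicit extremal constructions for the lower bounds, a combinatorial optimisation for the upper bounds, with $k=1$, $\frac{p}{2}<a<p$ singled out as the delicate case) is the right shape. However, two of your upper-bound arguments have genuine gaps. First, for $br(ap^k)\le 2a$ you argue from the ``$p$-quotient budget'', but the $p$-quotient of a $p'$-partition of $ap^k$ has total size $ap^{k-1}$, which for $k\ge 2$ is far larger than $2a$, so no accounting of mass transferred from the $p$-quotient to the $p$-core can produce the bound $2a$. The bound comes from the $p^k$-structure, not the $p$-structure: Macdonald's criterion gives $C_{p^k}(\lambda)=\emptyset$ and $w_{p^k}(\lambda)=a$, so the $p^k$-abacus of $\lambda$ is obtained from that of $\emptyset$ by $a$ down-moves, each of which increases the number of removable beads by at most two; hence already $|\lambda^-|\le 2a$. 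Your proposal never passes to $p^k$-hooks, and without that reduction both this bound and the verification that your constructed partitions are $p'$-partitions are substantially harder than you suggest.

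Second, and more seriously, in the case $k=1$ and $\frac{p}{2}<a<p$ the ``careful combinatorial optimisation'' you defer is where the theorem actually lives, and the local analysis you sketch does not close. Set $x=a-\frac{p-1}{2}$. The natural relaxation --- treat each pair of adjacent runners $(B_{j-1},B_j)$ as an independent $2$-abacus of weight $b_j$ and maximise $\sum_j\lfloor\sqrt{b_j}\rfloor$ subject to $\sum_j b_j\le 2a$ --- yields the upper bound $p-1+\lfloor 2x/3\rfloor$, which coincides with the claimed value $p-1+2\lfloor x/3\rfloor$ only when $x\not\equiv 2\ (\mathrm{mod}\ 3)$. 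When $x\equiv 2\ (\mathrm{mod}\ 3)$ the relaxation overshoots by exactly one, and one must show the relaxed optimum is unattainable by a genuine abacus: the $2$-abacus windows overlap in shared runners, and a parity argument (optimality forces an odd number of weight-four windows, while compatibility of overlapping windows forces that number to be even) is required to rule out the extra $+1$. Your proposal shows no awareness that the local bound is not tight, so as written it would only prove $br(ap)\le p-1+\lfloor 2x/3\rfloor$, leaving a one-unit gap in a third of the cases. A smaller issue: your baseline of ``one good move per active runner, minus one for the wraparound'' is not realisable, since adjacent weight-one runners interfere and there is not enough weight for $p-1$ active runners; the correct baseline uses $\frac{p-1}{2}$ pairwise non-adjacent runners of weight one, each contributing two good bead moves.
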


Theorem B is stated only for odd primes since we know that if $p=2$, then $br(2^k)=1$ for all $k\in\mathbb{N}_0$, by \cite{APS}.

Theorems A and B provide us with a recursive formula for $br(n)$, the maximal number of downward edges from level $n$ to level $n-1$ of $\mathbb{Y}_{p'}$. 
In the second part of our article we show that the slightly involved expression for the value of $br(n)$ described in Theorem A can be bounded from above by a much nicer function of the $p$-adic digits of $n$. 

\begin{thmC}
Let $n\in\mathbb{N}$ and let $p$ be a prime.  Let $n=\sum_{j=1}^t a_jp^{n_j}$ be the $p$-adic expansion of $n$, for some $0\leq n_1<n_2<\cdots <n_t$.
Then $1\leq br(n)\leq \mathcal{B}_n$, where 
$$\mathcal{B}_n = br(a_1p^{n_1}) + \sum_{j=2}^t \left\lfloor \frac{a_j}{2} \right\rfloor\leq 2a_1+ \sum_{j=2}^t \left\lfloor \frac{a_j}{2} \right\rfloor.$$ 
\end{thmC}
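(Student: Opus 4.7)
The plan is to apply the recursive formula from Theorem A, namely
$$br(n) = br(a_1p^{n_1}) + \sum_{j=2}^t \Phi(a_j, br(m_j)),$$
to reduce Theorem C to two independent estimates: a uniform per-term bound $\Phi(a, b) \leq \lfloor a/2 \rfloor$ governing the tail sum, and the base-case bound $br(a_1p^{n_1}) \leq 2a_1$ for the leading term. Summing the first across $j = 2, \ldots, t$ and substituting into Theorem A yields $br(n) \leq \mathcal{B}_n$; combining with the second gives $\mathcal{B}_n \leq 2a_1 + \sum_{j \geq 2}\lfloor a_j/2\rfloor$.

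The lower bound $br(n) \geq 1$ is immediate: taking $\lambda = (n)$, the trivial character $\chi^{(n)}$ has degree $1$ and restricts to the trivial character $\chi^{(n-1)}$ of $\fS_{n-1}$, whose degree is also $1$, so $|(n)^-_{p'}| = 1$ and $(n)$ is a competitor in the maximum defining $br(n)$.

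For the upper bound $br(a_1p^{n_1}) \leq 2a_1$, I would case split using Theorem B. The generic ``otherwise'' case gives equality. In the case $k=0$, any $y$ with $y(y+1) \leq 2a_1$ satisfies $y^2 \leq 2a_1$ and hence $y \leq 2a_1$, so $f(2a_1) \leq 2a_1$. In the exceptional case $k=1$ with $p/2 < a_1 < p$, dropping the floor gives
$$br(a_1p) \leq p - 1 + \tfrac{2a_1 - p + 1}{3},$$
which rearranges to $br(a_1 p) \leq 2a_1 \iff 2p - 2 \leq 4a_1 \iff p - 1 \leq 2a_1$, and this holds since $a_1 > p/2$.

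The main obstacle will be the uniform bound $\Phi(a,b) \leq \lfloor a/2 \rfloor$ on the function $\Phi$ from Definition~\ref{def:PhiPsi}. Since $\Phi(a,b)$ measures the additional $p'$-branches produced when adjoining $a$ blocks of size $p^{n_j}$ to a configuration whose earlier levels already contribute $b$ branches, I anticipate that the explicit definition of $\Phi$ will involve a case split according to the parity of $a$ and the range of $b$ (capturing, roughly, how many of the $a$ new blocks can ``pair up'' with existing branches). Verifying $\Phi(a,b) \leq \lfloor a/2 \rfloor$ in each case should be direct but will require inspecting the exact expression from Definition~\ref{def:PhiPsi}. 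Once established, the proof of Theorem C is completed by direct substitution into the recursion of Theorem A.
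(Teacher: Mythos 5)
Your proposal is correct and follows essentially the same route as the paper: the recursion of Theorem A (Corollary~3.10) combined with the uniform bound $\Phi(a,L)\le\lfloor a/2\rfloor$, which is exactly the paper's Lemma~2.2, plus the base-case bound $br(ap^k)\le 2a$ (Proposition~5.2, or your case analysis via Theorem~B, both of which work). The one piece you deferred --- verifying $\Phi(a,L)\le\lfloor a/2\rfloor$ --- needs no case split on the parity of $a$ or the range of $L$: since $\Phi(a,L)$ is defined as the maximum of $\sum_i f(a_i)$ over compositions $a_1+\cdots+a_L\le a$, the bound follows at once from $f(x)\le f(2)+f(x-2)=1+f(x-2)$ for $x\ge 2$, i.e.\ $f(x)\le\lfloor x/2\rfloor$.
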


Theorem C has some interesting direct applications (see Section~\ref{sec:D}). For instance,
in Remark~\ref{rem:appl} below, we observe that when $p\in\{2,3\}$ then $\mathcal{B}_n=br(n)$. In particular our result is a generalization of Theorem~\ref{sn}. Moreover, for any prime $p$ we observe that the upper bound $\mathcal{B}_n$ is attained for every $n$ having all of its $p$-adic digits lying in $\{0,1,2,3\}$.

We further show that the upper bound $\mathcal{B}_n$ given in Theorem C is indeed a good approximation of $br(n)$. 
In fact, the following result shows that the difference $\varepsilon_n:=\mathcal{B}_n-br(n)$ can be bounded by a constant depending only on the prime $p$, and not on $n\in\mathbb{N}$.

\begin{thmD}
For any $n\in\mathbb{N}$, we have $\varepsilon_n < \frac{p}{2}\mathrm{log}_2(p)$.
\end{thmD}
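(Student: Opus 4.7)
The plan is to expand $\varepsilon_n$ as a sum of local ``defects'' using Theorem A, and then to show that only logarithmically many of these defects can be strictly positive, each of size less than $p/2$.

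Set $m_{j+1}=m_j+a_jp^{n_j}$ with $m_2=a_1p^{n_1}$. Since the $p$-adic expansion of $m_{j+1}$ is a prefix of that of $n$, a single application of Theorem A to $m_{j+1}$ (compared with its application to $m_j$) telescopes to the one-step recursion
\[
br(m_{j+1}) = br(m_j) + \Phi(a_j, br(m_j)).
\]
The corresponding recursion for the upper bound from Theorem C is obvious from its definition: $\mathcal{B}_{m_{j+1}} = \mathcal{B}_{m_j} + \lfloor a_j/2\rfloor$. Subtracting and iterating from $j=2$ to $t$ yields
\[
\varepsilon_n = \sum_{j=2}^t \delta_j, \qquad \delta_j := \lfloor a_j/2\rfloor - \Phi(a_j, br(m_j)),
\]
with every $\delta_j\geq 0$ by Theorem C. Thus the task reduces to bounding the sum on the right.

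The second step is to exploit the explicit formula for $\Phi$ in Definition~\ref{def:PhiPsi} to establish a \emph{saturation statement}: there exists an explicit threshold $T\leq p$ such that whenever $br(m_j)\geq T$ one has $\Phi(a, br(m_j))=\lfloor a/2\rfloor$ for every $a\in\{1,\dotsc,p-1\}$. Equivalently $\delta_j=0$ as soon as $br(m_j)\geq T$. Since $br(m_j)$ is non-decreasing, setting $J:=\min\{j : br(m_j)\geq T\}$ gives $\varepsilon_n=\sum_{j=2}^{J-1}\delta_j$, and each such $\delta_j$ is automatically at most $\lfloor(p-1)/2\rfloor<p/2$.

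The third step is to bound $J$. Here I would read off from Definition~\ref{def:PhiPsi} a \emph{growth estimate} of the form $\Phi(a_j, br(m_j))\geq br(m_j)$ (or, more conservatively, $\Phi(a_j, br(m_j))\geq \lceil br(m_j)/c\rceil$ for some small constant $c>1$) whenever $br(m_j)<T$ and $\delta_j>0$; for indices with $\delta_j=0$ the counting argument simply skips ahead. Plugging this into the recursion gives geometric growth $br(m_{j+1})\geq 2\,br(m_j)$ (or $(1+1/c)\,br(m_j)$) between successive ``active'' indices, so that starting from $br(m_2)\geq 1$ one reaches the threshold $T\leq p$ in at most $\lfloor\log_2 T\rfloor\leq \log_2 p$ active steps. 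Combining with the pointwise bound $\delta_j<p/2$ yields $\varepsilon_n<\frac{p}{2}\log_2 p$.

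The main obstacle is the third step: extracting a clean geometric-growth rate for $br(m_j)$ below the saturation threshold from the piecewise definition of $\Phi$. Special care is needed for the small-digit cases $a_j\in\{0,1\}$ where $\lfloor a_j/2\rfloor=0$ and hence $\delta_j=0$ for free but $br(m_j)$ may fail to grow; the counting argument must restrict attention to the sub-sequence of indices with $a_j\geq 2$, using the recursion to confirm that only such indices can produce a positive defect in the first place.
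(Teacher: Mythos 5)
Your proposal is correct in outline and follows the same basic strategy as the paper (decompose $\varepsilon_n$ into per-digit defects $\delta_j=\lfloor a_j/2\rfloor-\Phi(a_j,br(m_j))$, show that only about $\log_2 p$ of them can be positive because $br(m_j)$ doubles across each contributing index, and bound each by $\tfrac{p-1}{2}<\tfrac{p}{2}$), but your bookkeeping of \emph{which} indices contribute is genuinely different and, in fact, cleaner. The paper selects a subsequence $i_1<\cdots<i_k$ \emph{a priori} by digit-size thresholds ($i_j$ is the first index with $a_{i_j}\ge 2^j+2$), proves $br(m_{i_j})\ge 2^{j-1}$ by induction using Lemma~\ref{lem:2powers}, and then checks separately that all indices off this subsequence have zero defect; you instead look directly at the indices with $\delta_j>0$ and argue that $br$ doubles at each of them. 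The step you flag as the main obstacle is not actually an obstacle: if $\delta_j>0$ then Lemma~\ref{lem:PhiBound} forces $L:=br(m_j)<\lfloor a_j/2\rfloor$, hence $2L\le a_j-2$, and taking all $L$ parts equal to $2$ in Definition~\ref{def:PhiPsi} gives
\[
\Phi(a_j,L)\;\ge\; L\cdot f(2)\;=\;L,
\]
so $br(m_{j+1})=br(m_j)+\Phi(a_j,br(m_j))\ge 2\,br(m_j)$ by Proposition~\ref{prop:br_m}. Combined with monotonicity of $br(m_j)$ in $j$ (Lemma~\ref{lem:br_incr}, or just $\Phi\ge 0$), the $r$-th active index satisfies $2^{r-1}\le br(m_{j_r})<\tfrac{p-1}{2}$, so $r<\log_2 p$ and $\varepsilon_n<r\cdot\tfrac{p}{2}<\tfrac{p}{2}\log_2 p$. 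Your handling of the small digits $a_j\le 1$ (where $\delta_j=0$ trivially and no growth is needed) is also right. In short: the argument is sound once the one-line estimate above is inserted, and it dispenses with the paper's Lemma~\ref{lem:2powers} and its threshold subsequence entirely.
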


A consequence of Theorem D is that for any odd prime $p$ we have $\mathrm{sup}\{br(n)\ :\ n\in\mathbb{N}\}=\infty$. This is false when $p=2$, by Theorem 1.


\section{Notation and Preliminaries}\label{sec:2}
In this section we fix the notation that will be used throughout the article and recall some basic facts in the representation theory of symmetric groups (we refer the reader to \cite{JK} and \cite{OlssonBook} for detailed accounts of the theory). We begin by introducing the technical notation necessary to state and prove Theorem A.
\begin{defn}\label{def:PhiPsi}
For $a\in\mathbb{N}_0$ and $L\in\mathbb{N}$, define
$$\Phi(a,L):=\max\left\{\sum_{i=1}^Lf(a_i)\ \middle|\ a_1+\cdots+a_L\le a\ \ \mathrm{and}\ \ a_i\in\mathbb{N}_0\ \ \forall\ 1\le i\le L 
\right\},$$ where $f(x)=\mathrm{max}\{y\in\mathbb{N}_0\ |\ y(y+1)\leq x\}.$
\end{defn}

We now record some properties of this function $\Phi$ which will be useful for later proofs.

\begin{lem}\label{lem:PhiBound}
Let $a\in\mathbb{N}_0$ and $L\in\mathbb{N}$. Then $\Phi(a,L)\le\lfloor a/2\rfloor$. In particular, if $L\ge\lfloor a/2\rfloor$ then $\Phi(a,L)=\lfloor a/2\rfloor$.
\end{lem}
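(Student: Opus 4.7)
The plan is to first establish the key pointwise inequality $f(x)\le\lfloor x/2\rfloor$ for every $x\in\mathbb{N}_0$, and then deduce the bound on $\Phi(a,L)$ by summing. For the pointwise bound I would argue as follows: if $y=f(x)\ge 1$ then by definition $y(y+1)\le x$, and since $y+1\ge 2$ this gives $2y\le y(y+1)\le x$, hence $y\le x/2$; because $y$ is an integer we conclude $y\le\lfloor x/2\rfloor$. The case $y=0$ is trivial. It is also worth recording that equality $f(x)=\lfloor x/2\rfloor$ holds for $x\in\{0,1,2,3\}$ (one can check $f(2)=1=\lfloor 2/2\rfloor$), which is exactly the fact needed to build an optimal configuration later.

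Next I would combine this with the standard observation that $\sum_i\lfloor a_i/2\rfloor\le\lfloor(\sum_i a_i)/2\rfloor$ (each summand satisfies $\lfloor a_i/2\rfloor\le a_i/2$, and the left-hand side is an integer). Thus for any admissible tuple $(a_1,\dotsc,a_L)$ with $a_1+\cdots+a_L\le a$,
\[
\sum_{i=1}^L f(a_i)\;\le\;\sum_{i=1}^L\left\lfloor\frac{a_i}{2}\right\rfloor\;\le\;\left\lfloor\frac{a_1+\cdots+a_L}{2}\right\rfloor\;\le\;\left\lfloor\frac{a}{2}\right\rfloor,
\]
and taking the maximum over all such tuples yields $\Phi(a,L)\le\lfloor a/2\rfloor$.

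For the second part, assuming $L\ge\lfloor a/2\rfloor$, I would exhibit an explicit tuple achieving the bound: set $a_i=2$ for $1\le i\le\lfloor a/2\rfloor$ and $a_i=0$ for the remaining indices. The hypothesis $L\ge\lfloor a/2\rfloor$ guarantees there are enough slots, and $\sum_i a_i=2\lfloor a/2\rfloor\le a$, so the tuple is admissible. Since $f(2)=1$ and $f(0)=0$, the corresponding value of $\sum_i f(a_i)$ is exactly $\lfloor a/2\rfloor$, matching the upper bound.

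There is no serious obstacle here; the argument is entirely elementary once one notices the pointwise inequality $f(x)\le\lfloor x/2\rfloor$ and the fact that it is saturated at $x=2$. The only minor subtlety is to be careful that the bound $\sum\lfloor a_i/2\rfloor\le\lfloor\sum a_i/2\rfloor$ uses integrality and not just $\lfloor a_i/2\rfloor\le a_i/2$, but this is standard.
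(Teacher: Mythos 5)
Your proof is correct and rests on essentially the same elementary facts as the paper's: the paper bounds $\Phi$ via the subadditivity $f(x)\le f(2)+f(x-2)$ together with $f(2)=1$, $f(1)=f(0)=0$, which amounts to the same pointwise estimate $f(x)\le\lfloor x/2\rfloor$ that you prove directly from $y(y+1)\le x$. Your explicit all-twos tuple for the equality case is exactly what the paper leaves implicit, so no further comment is needed.
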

\begin{proof}
Observe that for all integers $x\ge 2$, we have $f(x)\le f(2)+f(x-2)$. Hence $$\Phi(a,L)\le \lfloor a/2\rfloor\cdot f(2) + f(\delta)$$ where $\delta\in\{0,1\}$ and $\delta\equiv a\ (\operatorname{mod} 2)$. But $f(2)=1$ and $f(1)=f(0)=0$, so the assertions follow.
\end{proof}

\begin{lem}\label{lem:2powers}
Let $k\in\mathbb{N}$. Then $2^{k-1}\le \Phi(2^k+2,2^{k-1})\le 2^{k-1}+1$.
\end{lem}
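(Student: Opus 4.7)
The plan is to handle the two inequalities separately, both of which should be short.

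For the upper bound, I would simply invoke Lemma~\ref{lem:PhiBound} with $a=2^k+2$, which immediately gives
\[
\Phi(2^k+2, 2^{k-1}) \le \left\lfloor \frac{2^k+2}{2} \right\rfloor = 2^{k-1}+1.
\]
No further work is needed here.

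For the lower bound $\Phi(2^k+2, 2^{k-1}) \ge 2^{k-1}$, the natural strategy is to exhibit an explicit admissible tuple $(a_1, \ldots, a_{2^{k-1}})$ achieving the value $2^{k-1}$. I would take $a_i = 2$ for every $i \in \{1, \ldots, 2^{k-1}\}$. Then $\sum_{i} a_i = 2 \cdot 2^{k-1} = 2^k \le 2^k + 2$, so the constraint in Definition~\ref{def:PhiPsi} is satisfied. Since $f(2) = 1$ (as $1 \cdot 2 \le 2 < 2 \cdot 3$), we obtain $\sum_{i=1}^{2^{k-1}} f(a_i) = 2^{k-1}$, giving the claim.

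Since Lemma~\ref{lem:PhiBound} does all the work on the upper side and the lower side is witnessed by the trivial ``all twos'' tuple, I do not expect any real obstacle; the entire argument should be three or four lines.
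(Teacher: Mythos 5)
Your proof is correct and follows essentially the same route as the paper: the upper bound is exactly Lemma~\ref{lem:PhiBound} in both cases, and the lower bound is an explicit admissible tuple. The only difference is the witness — the paper uses the decomposition $2^k+2 = 6 + 2\cdot(2^{k-1}-2) + 0$, which forces a separate check for $k=1$, whereas your all-twos tuple with sum $2^k \le 2^k+2$ is slightly cleaner and works uniformly for all $k\ge 1$.
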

\begin{proof}
When $k=1$, we note that $\Phi(4,1)=1$. Now assume $k\ge 2$. The upper bound follows from Lemma~\ref{lem:PhiBound}. The lower bound follows from the fact that $2^k+2 = 6 + 2\cdot(2^{k-1}-2)+0$, and $f(6)+f(2)\cdot(2^{k-1}-2)+f(0) = 2^{k-1}$.
\end{proof}

\subsection{Combinatorics of partitions}\label{subs:comb}
Let $n$ be a natural number. We denote by $\mathcal{P}(n)$ the set of partitions of $n$ and we let $$\mathcal{P}=\bigcup_{n\in\mathbb{N}}\mathcal{P}(n).$$  The notation $\lambda\in\mathcal{P}(n)$ is sometimes replaced by $\lambda\vdash n$. For any natural number $e$, we denote by $C_e(\lambda)$ and $Q_e(\lambda)=(\lambda_0,\lambda_1,\ldots, \lambda_{e-1})$ the $e$-core and the $e$-quotient of $\lambda$ respectively. The $e$-weight of $\lambda$ is the natural number $w_e(\lambda)$ defined by 
$w_e(\lambda)=|\lambda_0|+|\lambda_1|+\cdots+|\lambda_{e-1}|.$
We remark that given a partition $\lambda$ of $n$, the $e$-quotient $\mathcal{Q}_e(\lambda)$ is uniquely determined up to a cyclic shift of its components. 
Moreover, it is well-known that (up to the above mentioned shift) any partition is uniquely determined by its $e$-core and $e$-quotient (we refer the reader to \cite{OlssonBook} for a detailed discussion on the topic).

Let $\mathcal{H}_e(\lambda)$ be the set of hooks of $\lambda$ having length divisible by $e$ and let $\mathcal{H}(Q_e(\lambda))=\cup_{i=1}^e\mathcal{H}(\lambda_{i})$. As explained in \cite[Theorem 3.3]{OlssonBook}, there is a bijection between $\mathcal{H}_e(\lambda)$ and $\mathcal{H}(Q_e(\lambda))$ mapping hooks in $\lambda$ of length $ex$ to hooks in the quotient of length~$x$.
Moreover the bijection respects the process of hook removal. Namely, the partition $\mu$ obtained by removing a $ex$-hook from $\lambda$ is such that $C_e(\mu)=C_e(\lambda)$ and the $e$-quotient of $\mu$ is obtained by removing a $x$-hook from one of the $e$ partitions involved in the $e$-quotient 
of~$\lambda$.
The other fundamental result we need to recall is \cite[Proposition 3.6]{OlssonBook}, which can be stated as follows. 

\begin{prop}\label{prop:hooks}
Let $\lambda\in\mathcal{P}(n)$. The number of $e$-hooks that must be removed from $\lambda$ to obtain $C_e(\lambda)$ is $w_e(\lambda)$. Moreover 
ì$w_e(\lambda)=|\mathcal{H}_e(\lambda)|=(|\lambda|-|C_e(\lambda)|)/e.$
\end{prop}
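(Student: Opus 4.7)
The plan is to reduce everything to the hook-preserving bijection $\mathcal{H}_e(\lambda)\longleftrightarrow\mathcal{H}(Q_e(\lambda))$ recorded in the discussion immediately preceding the proposition, together with the elementary observation that any partition $\mu$ satisfies $|\mathcal{H}(\mu)|=|\mu|$ (each cell being the NW-corner of a unique hook).

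First, I would derive the equality $w_e(\lambda)=|\mathcal{H}_e(\lambda)|$ by summing the previous observation over the components of the quotient:
\[
|\mathcal{H}_e(\lambda)|=|\mathcal{H}(Q_e(\lambda))|=\sum_{i=0}^{e-1}|\mathcal{H}(\lambda_i)|=\sum_{i=0}^{e-1}|\lambda_i|=w_e(\lambda).
\]
Next, I would count how many removals are required to reduce $\lambda$ to $C_e(\lambda)$. By the hook-removal compatibility of the bijection, stripping a single $e$-hook from $\lambda$ corresponds to deleting exactly one cell from one component of $Q_e(\lambda)$, with the $e$-core left unchanged; this drops $w_e$ by $1$ and leaves $|\mathcal{H}_e|$ well-defined for the new partition by the bijection applied at the next step. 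Iterating, the process cannot terminate until the quotient has become empty, at which point $|\mathcal{H}_e|=0$ and $\lambda$ has been reduced to $C_e(\lambda)$. Thus every sequence of $e$-hook removals producing $C_e(\lambda)$ consists of exactly $w_e(\lambda)$ steps, proving the first claim.

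Finally, since each such step removes exactly $e$ cells, performing $w_e(\lambda)$ of them yields $|\lambda|-|C_e(\lambda)|=e\cdot w_e(\lambda)$, which rearranges to $w_e(\lambda)=(|\lambda|-|C_e(\lambda)|)/e$ and completes the proof. The only delicate point is the stability of the bijection under iteration — that after removing an $e$-hook and forming the quotient of the resulting partition, the correspondence of remaining $e$-hooks with hooks of the new quotient still holds — but this is precisely the content of the phrase ``the bijection respects the process of hook removal'' in the cited paragraph, so once that statement is granted as a black box the entire argument is essentially bookkeeping.
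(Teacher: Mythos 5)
Your argument is correct. Note, however, that the paper itself offers no proof of this proposition: it is stated as a recalled result, cited directly as Proposition 3.6 of Olsson's book, so there is no internal proof to compare yours against. What you have written is the standard derivation from the hook-preserving bijection $\mathcal{H}_e(\lambda)\leftrightarrow\mathcal{H}(Q_e(\lambda))$ that the paper records in the paragraph immediately before the statement (Olsson's Theorem 3.3), and all the steps check out: the count $|\mathcal{H}(\mu)|=|\mu|$ gives $|\mathcal{H}_e(\lambda)|=w_e(\lambda)$; the compatibility of the bijection with hook removal shows each $e$-hook removal decreases $w_e$ by exactly $1$ while fixing the $e$-core, and (since a nonempty quotient component always has a removable node, hence $\lambda$ has a removable $e$-hook) the process can only halt when the quotient is empty, i.e.\ at $C_e(\lambda)$; and the size count $|\lambda|-|C_e(\lambda)|=e\,w_e(\lambda)$ follows. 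The one point worth making explicit, which you only gesture at, is that a partition with empty $e$-quotient and $e$-core $C_e(\lambda)$ is equal to $C_e(\lambda)$ — this uses the uniqueness of a partition given its core and quotient, which the paper also recalls. Granted the black-box bijection, your proof is complete and is essentially the argument one finds in Olsson.
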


\noindent\textbf{James' Abacus.} All of the operations on partitions concerning addition and removal of $e$-hooks described above are best performed on James' abacus. We give here a brief description of this important object, and introduce some pieces of notation that will be used extensively throughout. We refer the reader to \cite[Chapter 2]{JK} for a complete account of the combinatorial properties of James' abacus.

Let $\lambda$ be a partition of $n$ and let $A$ be an $e$-abacus configuration for $\lambda$. Denote by $A_0,$ $A_1,$ $\ldots,$ $A_{e-1}$ the runners in $A$ from left to right and label the rows by integers such that the row numbers increase downwards. As is customary, all abaci contain finitely many rows and hence finitely many beads, but in all instances enough to perform all of the necessary operations. For $j\in\{0,\ldots, e-1\}$, denote by $|A_j|$ the number of beads on runner $j$. Moreover, we denote by $A^{\uparrow}$ the $e$-abacus obtained from $A$ by sliding all beads on each runner as high as possible. Extending the notation just introduced, we denote by $A_0^{\uparrow},\ldots, A_{e-1}^{\uparrow}$ the runners of $A^{\uparrow}$.   
As explained in \cite[Chapter 2]{JK}, $A^{\uparrow}$ is an $e$-abacus for $C_{e}(\lambda)$. Let the operation of sliding any single bead down (resp.~up) one row on its runner be called a \emph{down-move} (resp.~\emph{up-move}). Of course, such a move is only possible for a bead in position $(i,j)$ (that is, in row $i$ on runner $A_j$) if the respective position $(i\pm 1,j)$ was empty initially. Sometimes we call an empty position a \textit{gap}. We say that position $(x,y)$ is the \textit{first} gap in $A$ if there are beads in positions $(i,j)$ for all $i<x$ and all $j$, and in positions $(x,j)$ for all $j<y$.

On the level of partitions, performing a down- or up-move corresponds to adding or removing an $e$-hook, respectively. In analogy with the notation used for partitions, we denote by $w(A)$ the total number of up-moves needed to obtain $A^\uparrow$ from $A$. Similarly, for $i\in\{0,\ldots, e-1\}$ we let $w(A_i)$ be the number of those up-moves that were performed on runner $i$ in the transition from $A$ to $A^\uparrow$. It is easy to see that $w_e(\lambda)=w(A)=w(A_0)+\cdots +w(A_{e-1})$. 

Suppose that $c$ is a bead in position $(i,j)$ of $A$. We say that $c$ is a \textit{removable bead} if $j\ne 0$ and there is no bead in $(i,j-1)$, or if $j=0$ and there is no bead in $(i-1,e-1)$. Denote by $A^{\leftarrow c}$ the abacus configuration obtained by sliding $c$ into position $(i,j-1)$ (respectively $(i-1,e-1)$).
Clearly $A^{\leftarrow c}$ is an abacus configuration for a partition $\mu\in\lambda^-$, and conversely any $\mu\in\lambda^-$ can be represented by $A^{\leftarrow c}$ for some such $c$, since removable beads in an abacus of $\lambda$ correspond to removable nodes in the Young diagram of $\lambda$. Here and throughout the remainder of the article we denote by $\lambda^-$ the subset of $\mathcal{P}(n-1)$ consisting of all partitions whose Young diagram can be obtained from that of $\lambda$ by removing a node.

Finally, for $j\in\{0,\ldots, e-1\}$ we denote by $\mathrm{Rem}(A_j)$ the number of removable beads in $A$ lying on runner $A_j$. In particular, we have that $|\lambda^-|=\mathrm{Rem}(A_0)+\cdots+\mathrm{Rem}(A_{e-1})$.

\begin{lem}\label{lem:weightgeneral}
Let $e\in\mathbb{N}$. Let $\lambda$ be a partition of any natural number, and denote by $A$ an $e$-abacus configuration for $\lambda$. Suppose $c$ is a removable bead on runner $A_j$ and let $\mu\vdash n-1$ be the partition represented by $A^{\leftarrow c}$. Then
\[ w_e(\mu) = w_e(\lambda) + \begin{cases}
|A_j| - |A_{j-1}| - 1 & \mathrm{if}\ j\ne 0\\
|A_0| - |A_{e-1}| - 2 & \mathrm{if}\ j = 0.\\
\end{cases} \]
\end{lem}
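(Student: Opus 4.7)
The plan is to compute the runner-by-runner up-move counts $w(A_0),\ldots,w(A_{e-1})$ via a closed formula, and then to track exactly which of these change when bead $c$ is moved. The key observation is: if runner $A_j$ carries $k:=|A_j|$ beads at row positions $r_1<r_2<\cdots<r_k$ (with row $0$ at the top, as in the convention just introduced), then after up-compression these beads occupy rows $0,1,\ldots,k-1$, and since $r_i\ge i-1$ the up-move count on this runner equals
$$w(A_j)=\sum_{i=1}^k\bigl(r_i-(i-1)\bigr)=S(A_j)-\binom{|A_j|}{2},$$
where $S(A_j):=\sum_i r_i$ denotes the sum of the row positions of the beads on $A_j$. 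Since $w_e(\lambda)=w(A)=\sum_j w(A_j)$, computing $w_e(\mu)-w_e(\lambda)$ reduces to bookkeeping the changes in $|A_\bullet|$ and $S(A_\bullet)$ on the (at most two) runners affected by moving $c$.

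First I would treat the case $j\ne 0$: here $A\to A^{\leftarrow c}$ slides bead $c$ from $(i,j)$ to $(i,j-1)$, so only runners $j-1$ and $j$ change, with $|A_{j-1}|$ increasing by $1$, $|A_j|$ decreasing by $1$, $S(A_{j-1})$ increasing by $i$, and $S(A_j)$ decreasing by $i$. Substituting into the closed formula for $w$ and using $\binom{k+1}{2}-\binom{k}{2}=k$, I expect the two copies of $i$ to cancel and the combined change in $w$ to simplify to exactly $|A_j|-|A_{j-1}|-1$, as desired. The case $j=0$ is completely analogous except that removability of $c$ forces $i\ge 1$ and the bead now slides from $(i,0)$ to $(i-1,e-1)$: thus $S(A_{e-1})$ increases by $i-1$ rather than by $i$, while $S(A_0)$ still decreases by $i$. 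Pushing this through the same computation yields the extra $-1$ and produces $|A_0|-|A_{e-1}|-2$, matching the claim.

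The proof is essentially a bookkeeping exercise, with no serious conceptual obstacle beyond establishing the closed formula $w(A_j)=S(A_j)-\binom{|A_j|}{2}$. The only subtle point I foresee is remembering that in the $j=0$ case the bead moves both horizontally (across the abacus to the last runner) and vertically (up by one row), which is precisely what accounts for the additional $-1$ compared with the $j\ne 0$ formula.
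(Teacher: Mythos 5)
Your proposal is correct and follows essentially the same route as the paper: the paper likewise expresses the runner weights as $\sum_i r_i$ minus a triangular number (after normalizing the row labels so the count of up-moves per runner is $\sum_i (r_i - (i-1))$) and compares the affected runners before and after the move, which is exactly your bookkeeping with $S(A_j)-\binom{|A_j|}{2}$. The delta computations you outline, including the extra $-1$ in the $j=0$ case coming from the bead also rising one row, match the paper's argument.
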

\begin{proof}
First suppose $j\ne 0$. Without loss of generality we can relabel the rows of the $e$-abacus $A$ such that all rows labelled by negative integers do not have empty positions. 
To ease the notation we let $B:=A^{\leftarrow c}$.
Clearly $w(A_i)=w(B_i)$ for all $i\ne j-1, j$ in $\{0,\dotsc,e-1\}$. Hence $$ w_e(\mu)-w_e(\lambda) = w(B_{j-1})+w(B_j) - w(A_{j-1})-w(A_j).$$
Let $s$ and $t$ be the numbers of beads lying in rows labelled by non-negative integers in runners $A_{j-1}$ and $A_j$ respectively. Suppose that the $s$ beads on $A_{j-1}$ lie in rows $0\le x_1<\cdots<x_s$ and that the $t$ beads on $A_j$ lie in rows $0\le y_1<\cdots<y_t$. Then $$w(A_{j-1})+w(A_j) = \sum_{i=1}^s(x_i-(i-1)) + \sum_{i=1}^t (y_i-(i-1)) = \sum_{i=1}^s x_i + \sum_{i=1}^t y_i -\tfrac{s(s-1)}{2} - \tfrac{t(t-1)}{2}.$$
Suppose that the bead $c$ lies in row $y_l$ for some $l\in\{1,\dotsc,t\}$. Since $c$ is removable, $y_l\ne x_i$ for all $i\in\{1,\dotsc,s\}$. Thus the beads on $B_{j-1}$ lie in rows $0\le x'_1<\cdots<x'_{s+1}$ with $\{x'_1,\dotsc,x'_{s+1}\}=\{x_1,\dotsc,x_s,y_l \}$ and the beads on $B_j$ lie in rows $0\le y'_1<\cdots<y'_{t-1}$ with $\{y'_1,\dotsc,y'_{t-1} \}=\{y_1,\dotsc,y_{l-1},y_{l+1},\dotsc,y_t \}$. Hence $$w(B_{j-1}) + w(B_j) = \sum_{i=1}^{s+1}(x'_i-(i-1)) + \sum_{i=1}^{t-1}(y'_1-(i-1)) = \sum_{i=1}^s x_i + \sum_{i=1}^t y_i - \tfrac{s(s+1)}{2} - \tfrac{(t-1)(t-2)}{2}$$ and we conclude that $w_e(\mu)-w_e(\lambda)=t-s-1=|A_j|-|A_{j-1}|-1$. 

The case when $j=0$ is similar.
\end{proof}

\begin{rem}
In this note, given a partition $\lambda$ and a fixed $e$-abacus configuration $A$ for $\lambda$ we let $\lambda_i$ be the partition corresponding to the runner $A_i$, considered as a $1$-abacus. 
The resulting $e$-quotient $(\lambda_0,\lambda_1,\ldots, \lambda_{e-1})$ depends on the choice of the abacus $A$ (a different choice of the $e$-abacus may induce a cyclic shift on the components of the $e$-quotient). Nevertheless, all of the results presented in Section \ref{sec:2} onwards hold independently of this observation. For instance, the $e$-weight $w_e(\lambda)$ introduced at the beginning of Section \ref{subs:comb} does not depend on the choice of the $e$-abacus; the same discussion holds for Theorem \ref{thm:oddcrit2} below.
\end{rem}

\subsection{Characters of $\fS_n$}
For each $n\in\mathbb{N}$, the elements of the set $\mathrm{Irr}(\fS_n)$ of irreducible characters of $\fS_n$ are naturally labelled by partitions of $n$.
For $\lambda\in\mathcal{P}(n)$, the corresponding irreducible character is denoted by $\chi^\lambda$. In this article we will often identify the labelling partition with the corresponding irreducible character, and hence write $\lambda\in\Irr(\fS_n)$ to denote at once the partition $\lambda$ of $n$ and the irreducible character $\chi^\lambda$. The meaning of this notation will always be clear from the context. We recall the \textit{Branching rule} (see \cite[Chapter 9]{James}) which tells us that $$(\chi^\lambda)_{\fS_{n-1}}=\sum_{\mu\in\lambda^-}\chi^\mu.$$
By convention we let $\mathfrak{S}_0$ be the trivial 1-element group and $\mathcal{P}(0)=\{\emptyset\}$.

\smallskip

From now on let $p$ be a prime. We denote by $\Irr_{p'}(\fS_n)$ the set of irreducible characters of $\fS_n$ of degree coprime to $p$. We say that $\lambda$ is a $p'$-partition of $n$ (written $\lambda\vdash_{p'} n$) if $\lambda\in\Irr_{p'}(\fS_n)$. Thus the set $\mathcal{E}_n$ may be written as $\{|\lambda^-_{p'}|\ :\ \lambda\vdash_{p'} n \}$, and we remark here that $br(n)=\max\mathcal{E}_n$ is well-defined. Indeed, for any $n\in\mathbb{N}$, if $\lambda\vdash_{p'}n$ then $|\lambda^-_{p'}|\ge 1$, so $\mathcal{E}_n$ is non-empty and $br(n)\ge 1$.

\smallskip

Irreducible characters of $\fS_n$ of $p'$-degree were completely described in \cite{Mac}. We restate this result in language that will be particularly convenient for our purposes.
\begin{thm} \label{thm:oddcriterion}
Let $n$ be a natural number and let $\lambda\in\mathrm{Irr}(\fS_n)$. Let $a\in\{1,\ldots, p-1\}$ and $k\in\mathbb{N}_0$ be such that $ap^k\leq n<(a+1)p^k$. Then $\lambda\in\mathrm{Irr}_{p'}(\fS_n)$ if and only if $C_{p^k}(\lambda)\in\mathrm{Irr}_{p'}(\fS_{n-ap^k})$. 
\end{thm}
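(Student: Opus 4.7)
The plan is to reduce the theorem to the following classical criterion, which I would prove first: $\chi^\lambda\in\Irr_{p'}(\fS_n)$ if and only if $|C_{p^i}(\lambda)|<p^i$ for every $i\ge 1$.

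To establish the criterion, I would derive the identity
\[
\nu_p\!\left(\chi^\lambda(1)\right) \;=\; \sum_{i\ge 1}\left\lfloor \frac{|C_{p^i}(\lambda)|}{p^i}\right\rfloor.
\]
By the hook length formula, $\nu_p(\chi^\lambda(1)) = \nu_p(n!) - \sum_{h}\nu_p(h_\lambda)$, where the sum runs over all hooks of $\lambda$; Legendre's formula gives $\nu_p(n!) = \sum_{i\ge 1}\lfloor n/p^i\rfloor$. Interchanging the order of summation, $\sum_h\nu_p(h_\lambda)=\sum_{i\ge 1}|\{h: p^i\mid h_\lambda\}|$, and by the bijection recalled before Proposition~\ref{prop:hooks} together with the proposition itself, this set has size $w_{p^i}(\lambda)$. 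The relation $n = |C_{p^i}(\lambda)| + p^iw_{p^i}(\lambda)$ then forces $\lfloor n/p^i\rfloor - w_{p^i}(\lambda) = \lfloor |C_{p^i}(\lambda)|/p^i\rfloor$, and the formula follows. Since every summand is a non-negative integer, $\chi^\lambda$ has $p'$-degree if and only if each term vanishes, i.e., $|C_{p^i}(\lambda)| < p^i$ for every $i\ge 1$.

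Under the hypothesis $ap^k\le n<(a+1)p^k$ I would then make three observations. First, for $i>k$ we have $|C_{p^i}(\lambda)|\le n<p^{k+1}\le p^i$, so the constraint is automatic. Second, for $i=k$, the congruence $|C_{p^k}(\lambda)|\equiv n\equiv n-ap^k\pmod{p^k}$ combined with $|C_{p^k}(\lambda)|<p^k$ forces $|C_{p^k}(\lambda)|=n-ap^k$. Third, for $1\le i<k$, one has the identity $C_{p^i}(\lambda)=C_{p^i}(C_{p^k}(\lambda))$: reading $\lambda$ on a $p^i$-abacus, removing a $p^k$-hook translates into sliding a single bead upward by $p^{k-i}$ positions on a single runner, which preserves the number of beads on each $p^i$-runner, and the $p^i$-core depends only on those bead counts.

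Putting everything together, $\chi^\lambda\in\Irr_{p'}(\fS_n)$ is equivalent to the conjunction of $|C_{p^k}(\lambda)|=n-ap^k$ and $|C_{p^i}(C_{p^k}(\lambda))|<p^i$ for every $1\le i\le k-1$. Applying the established criterion to the partition $C_{p^k}(\lambda)\vdash n-ap^k$, the second condition is precisely $\chi^{C_{p^k}(\lambda)}\in\Irr_{p'}(\fS_{n-ap^k})$ (the $i=k$ inequality being automatic once $|C_{p^k}(\lambda)|<p^k$), which gives the desired equivalence. The main obstacle is the third observation above: verifying $C_{p^i}(\lambda)=C_{p^i}(C_{p^k}(\lambda))$ requires the standard but nontrivial dictionary between hook removal and bead moves on James' abacus, already in play throughout Section~\ref{subs:comb}.
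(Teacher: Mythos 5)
Your proof is correct, but there is nothing in the paper to compare it against: Theorem~\ref{thm:oddcriterion} is stated as a reformulation of Macdonald's theorem \cite{Mac} and is not proved there (its companion, Theorem~\ref{thm:oddcrit2}, is likewise dispatched with a pointer to the $p$-core tower in \cite{OlssonBook}). What you have supplied is the standard self-contained derivation, and it checks out. The identity $\nu_p(\chi^\lambda(1))=\sum_{i\ge 1}\lfloor |C_{p^i}(\lambda)|/p^i\rfloor$ follows exactly as you say: the hook length formula plus Legendre's formula, the count $|\{h:p^i\mid h\}|=w_{p^i}(\lambda)$ from Proposition~\ref{prop:hooks}, and the relation $n=|C_{p^i}(\lambda)|+p^iw_{p^i}(\lambda)$ make each summand a non-negative integer, so the valuation vanishes precisely when $|C_{p^i}(\lambda)|<p^i$ for every $i$. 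Your three reductions under $ap^k\le n<(a+1)p^k$ are all sound, including the edge case $k=0$ (where the range $1\le i<k$ is empty and every condition is automatic). The one step that genuinely leans on the abacus is $C_{p^i}(C_{p^k}(\lambda))=C_{p^i}(\lambda)$ for $i\le k$, and your justification — a $p^k$-hook removal is a slide of one bead up $p^{k-i}$ rows on a single runner of the $p^i$-abacus, which preserves the bead count on each runner and hence the $p^i$-core — is the standard argument and uses only $p^i\mid p^k$. It is also worth making explicit, as your final paragraph implicitly does, that the assertion $C_{p^k}(\lambda)\in\Irr_{p'}(\fS_{n-ap^k})$ already contains the claim $|C_{p^k}(\lambda)|=n-ap^k$; your $i=k$ congruence observation is precisely what reconciles this in both directions of the equivalence.
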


Theorem~\ref{thm:oddcriterion} says that $\lambda$ is a $p'$-partition if and only if $w_{p^k}(\lambda)=a$ and the partition $C_{p^k}(\lambda)$ obtained from $\lambda$ by successively removing all possible $p^k$-hooks is a $p'$-partition of $n-ap^k$. It will sometimes be useful to use the following equivalent version of Theorem~\ref{thm:oddcriterion}.

\begin{thm}\label{thm:oddcrit2}
Let $n=\sum_{j=0}^ka_jp^j$ be the $p$-adic expansion of $n\in\mathbb{N}$. 
Let $\lambda\in\mathrm{Irr}(\fS_n)$ and let $Q_{p}(\lambda)=(\lambda_0,\lambda_1,\ldots, \lambda_{p-1})$. Then $\lambda\in\mathrm{Irr}_{p'}(\fS_n)$ if and only if
\begin{itemize}
\item[(i)] $C_p(\lambda)\vdash a_0$, and 
\item[(ii)] for all $t\in\{0,1,\ldots, p-1\}$ there exists $b_{1t}, b_{2t},\ldots, b_{kt}\in\mathbb{N}_0$ such that 
$$\sum_{t=0}^{p-1}b_{jt}=a_j\ \text{for all}\ j\in\{1,\ldots k\},\text{and such that}\ \ \lambda_t\vdash_{p'}\sum_{j=1}^kb_{jt}p^{j-1}.$$  
\end{itemize}
\end{thm}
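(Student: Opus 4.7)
The plan is to induct on $k$, using Theorem~\ref{thm:oddcriterion} as the engine. The base case $k = 0$ is immediate: then $n = a_0 < p$, $\lambda$ has no $p$-hook so $C_p(\lambda) = \lambda$, condition (ii) is vacuous, and every irreducible character of $\fS_n$ has $p'$-degree, so both sides hold trivially.

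For the inductive step, set $\mu := C_{p^k}(\lambda)$ and $m := n - a_k p^k = \sum_{j=0}^{k-1} a_j p^j$. Theorem~\ref{thm:oddcriterion} applied to $\lambda$ yields $\lambda \in \mathrm{Irr}_{p'}(\fS_n) \iff \mu \in \mathrm{Irr}_{p'}(\fS_m)$. The crucial combinatorial input is the identity
$$C_p(\mu) = C_p(\lambda) \quad \text{and} \quad Q_p(\mu) = \bigl(C_{p^{k-1}}(\lambda_0), \ldots, C_{p^{k-1}}(\lambda_{p-1})\bigr),$$
obtained by iterating the bijection recalled just before Proposition~\ref{prop:hooks}: a $p^k$-hook of $\lambda$ corresponds to a $p^{k-1}$-hook of some $\lambda_t$, and the $p$-core is preserved under hook removal. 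Applying the inductive hypothesis to $\mu \vdash m$ then shows that $\mu \in \mathrm{Irr}_{p'}(\fS_m)$ is equivalent to the conjunction of (i$'$) $C_p(\lambda) \vdash a_0$ and (ii$'$) the existence, for each $t$, of $c_{1t}, \ldots, c_{(k-1)t} \in \mathbb{N}_0$ with $\sum_t c_{jt} = a_j$ and $C_{p^{k-1}}(\lambda_t) \vdash_{p'} \sum_{j=1}^{k-1} c_{jt} p^{j-1}$.

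The remaining task is to show that (i$'$) and (ii$'$) together are equivalent to (i) and (ii) of the statement. Condition (i$'$) coincides with (i). For the implication (ii) $\Rightarrow$ (ii$'$), I would set $c_{jt} := b_{jt}$ for $j \le k-1$; since $|\lambda_t| - b_{kt} p^{k-1} = \sum_{j=1}^{k-1} b_{jt} p^{j-1} < p^{k-1}$ (as $b_{jt} \le a_j \le p-1$), Theorem~\ref{thm:oddcriterion} applied to each $\lambda_t$ at level $k-1$ gives $C_{p^{k-1}}(\lambda_t) \vdash_{p'} \sum_{j=1}^{k-1} b_{jt} p^{j-1}$. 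For the converse, given the $c_{jt}$'s, I would set $b_{jt} := c_{jt}$ for $j \le k-1$ and $b_{kt} := w_{p^{k-1}}(\lambda_t)$. Summing the identity $|\lambda_t| = |C_{p^{k-1}}(\lambda_t)| + b_{kt} p^{k-1}$ over $t$ and comparing with $\sum_t |\lambda_t| = (n - |C_p(\lambda)|)/p = \sum_{j=1}^k a_j p^{j-1}$ forces $\sum_t b_{kt} = a_k$; a second application of Theorem~\ref{thm:oddcriterion} then upgrades each $C_{p^{k-1}}(\lambda_t) \vdash_{p'} \sum_{j=1}^{k-1} c_{jt} p^{j-1}$ to $\lambda_t \vdash_{p'} \sum_{j=1}^k b_{jt} p^{j-1}$.

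The main technical obstacle is establishing the combinatorial identity for $(C_p(\mu), Q_p(\mu))$ cleanly and pushing it through the induction; once in hand, the rest is careful bookkeeping. The one delicate case to track is $b_{kt} = 0$, in which $|\lambda_t| < p^{k-1}$ forces $C_{p^{k-1}}(\lambda_t) = \lambda_t$ and $w_{p^{k-1}}(\lambda_t) = 0$ automatically, so Theorem~\ref{thm:oddcriterion} applies trivially at the level of $\lambda_t$.
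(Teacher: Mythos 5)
Your proof is correct, but it takes a genuinely different (and more self-contained) route than the paper, whose ``proof'' consists of a one-line appeal to the $p$-core tower of $\lambda$ and a citation of Olsson's notes. What you do instead is unroll that machinery into an induction on the $p$-adic length $k$, powered entirely by Theorem~\ref{thm:oddcriterion} together with the hook--quotient correspondence recalled before Proposition~\ref{prop:hooks}: the identity $C_p(C_{p^k}(\lambda))=C_p(\lambda)$ and $Q_p(C_{p^k}(\lambda))=\bigl(C_{p^{k-1}}(\lambda_0),\ldots,C_{p^{k-1}}(\lambda_{p-1})\bigr)$ is exactly the statement that one level of the $p$-core tower commutes with taking $p^k$-cores, and your bookkeeping with the $b_{jt}$ versus $c_{jt}$ (including the check that $\sum_t b_{kt}=a_k$ via $\sum_t|\lambda_t|=(n-a_0)/p$, and the degenerate case $b_{kt}=0$) is precisely the digit-counting that the tower formula $\nu_p(\chi^\lambda(1))=\bigl(\sum_j\beta_j(\lambda)p^j-n\bigr)/(p-1)$ encodes globally. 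The paper's approach buys brevity and a clean conceptual statement (the row sums of the tower must equal the $p$-adic digits); yours buys independence from the external reference and stays within the toolkit the paper has already set up, at the cost of the somewhat delicate double induction. One small point worth making explicit in a write-up: Theorem~\ref{thm:oddcriterion} as stated requires the leading coefficient $a\in\{1,\ldots,p-1\}$, so you should note that the case $a_k=0$ (or $m=0$, i.e.\ $n=a_kp^k$) is handled by shortening the expansion, and that the cyclic-shift ambiguity of $Q_p$ is harmless because condition (ii) is invariant under relabelling the components --- both are non-issues, but they are the kind of edge cases a referee would ask about.
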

\begin{proof}
This characterization of $p'$-partitions of $n\in\mathbb{N}$ can be easily proved using the $p$\textit{-core tower} associated to any partition of $n$. We refer the reader to \cite[Chapters I and II]{OlssonBook} for the precise description of this combinatorial object. 
\end{proof}


\section{The core map and the proofs of Theorems A and C}\label{sec:AC}

In this section we prove some combinatorial statements that will play a fundamental role in the proofs of all of our main theorems. As a consequence of these observations, we are able to give proofs of Theorems A and C. As appropriately remarked later in this section, the proof of Theorem B is postponed to Section~\ref{sec:apk} to improve readability. 
\begin{notation}\label{not:sec3}
Unless otherwise stated, in this section we fix $n\in\mathbb{N}$ such that $n=ap^k+m$ for some $k\ge 1$, $a\in\{1,\ldots,p-1\}$ and $0<m<p^k$. To be precise this will be the standing assumption from Lemma \ref{lem:weight} to Proposition \ref{prop:br_m}.
\end{notation}

\begin{lem}\label{lem:weight}
Let $\lambda\vdash n$ be such that $w_{p^k}(\lambda)=w\leq a$ and denote by $A$ a $p^k$-abacus configuration for $\lambda$. Suppose $c$ is a removable bead on runner $A_j$ and let $\mu\vdash n-1$ be the partition represented by $A^{\leftarrow c}$. 
Then $w_{p^k}(\mu)=w$ if and only if 
\[ |A_j|\ =\begin{cases}
 1+|A_{j-1}| & \mathrm{if}\ j\neq 0,\\ 2+|A_{p^k-1}| & \mathrm{if}\ j=0.\end{cases} \]
\end{lem}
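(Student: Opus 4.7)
The plan is to derive this lemma as an immediate corollary of Lemma~\ref{lem:weightgeneral}. That earlier result already computes the exact value of $w_{p^k}(\mu) - w_{p^k}(\lambda)$ when a single removable bead $c$ on runner $A_j$ is slid left; the current claim is then simply the equivalence ``$w_{p^k}(\mu) = w_{p^k}(\lambda) \iff$ that weight difference vanishes''.

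Concretely, by Lemma~\ref{lem:weightgeneral} applied with $e = p^k$, we have
\[
w_{p^k}(\mu) - w_{p^k}(\lambda) =
\begin{cases}
|A_j| - |A_{j-1}| - 1 & \text{if } j \neq 0, \\
|A_0| - |A_{p^k-1}| - 2 & \text{if } j = 0.
\end{cases}
\]
Since $w_{p^k}(\lambda) = w$ by hypothesis, the condition $w_{p^k}(\mu) = w$ is equivalent to the right-hand side being zero, which rearranges to $|A_j| = 1 + |A_{j-1}|$ in the case $j \neq 0$ and to $|A_0| = 2 + |A_{p^k-1}|$ in the case $j = 0$. This is precisely the stated biconditional.

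I note that the assumption $w \leq a$ does not actually enter this argument; it is inherited from the standing Notation~\ref{not:sec3} and reflects the regime in which the lemma will be used in later sections, namely when $\lambda$ is a $p'$-partition and hence has $p^k$-weight at most $a$ by Theorem~\ref{thm:oddcriterion}. Consequently, I do not anticipate any real obstacle. The only subtlety worth flagging is the wrap-around convention at $j = 0$, where sliding a removable bead from column $0$ means moving it up one row into column $p^k-1$; this accounts for the constant $2$ (rather than $1$) in the second case of the conclusion, and is already handled inside Lemma~\ref{lem:weightgeneral}.
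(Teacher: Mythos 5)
Your proposal is correct and matches the paper's own proof, which simply observes that the lemma is immediate from Lemma~\ref{lem:weightgeneral} with $e=p^k$; you have just spelled out the rearrangement explicitly. Your side remarks about the unused hypothesis $w\leq a$ and the wrap-around at $j=0$ are accurate.
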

\begin{proof}
This is immediate by Lemma~\ref{lem:weightgeneral}.
\end{proof}

The following result, which we believe is of independent interest, is one of the key steps in proving Theorem A.

\begin{thm}\label{thm:coremap}
Let $\lambda\vdash_{p'}n$ and let $\alpha\in\lambda_{p'}^{-}$. Then $C_{p^k}(\alpha)\in \mu_{p'}^{-}$, where $\mu:=C_{p^k}(\lambda)$. 
In particular we deduce that the map $$C_{p^k}: \lambda_{p'}^-\longrightarrow\mu_{p'}^-,$$ is well-defined. Moreover, it is surjective.
\end{thm}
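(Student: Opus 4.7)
The plan is to work directly on the $p^k$-abacus. Fix an abacus configuration $A$ for $\lambda$; by Notation~\ref{not:sec3} and Theorem~\ref{thm:oddcriterion}, $A^\uparrow$ represents $\mu=C_{p^k}(\lambda)$, we have $w_{p^k}(\lambda)=a$, and $\mu\vdash_{p'}m$. For well-definedness, the first aim is to show that whenever $\alpha=A^{\leftarrow c}\in\lambda_{p'}^-$ with $c$ a removable bead on runner $A_j$, then $C_{p^k}(\alpha)\in\mu^-$. Once this is established, the $p'$-condition on $C_{p^k}(\alpha)$ is immediate from Theorem~\ref{thm:oddcriterion} applied to $\alpha\vdash_{p'}(n-1)$: since $n-1=ap^k+(m-1)$, the theorem forces $C_{p^k}(\alpha)\vdash_{p'}(m-1)$, with the boundary case $m=1$ handled by the direct check $C_{p^k}(\alpha)=\emptyset$.

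The key geometric step runs as follows. Since $\alpha\vdash_{p'}(n-1)$ forces $w_{p^k}(\alpha)=a=w_{p^k}(\lambda)$, Lemma~\ref{lem:weight} yields the bead-count identity $|A_j|=|A_{j-1}|+1$ (or $|A_0|=|A_{p^k-1}|+2$ when $j=0$). A direct inspection of the $\uparrow$-operation then shows that $(A^{\leftarrow c})^\uparrow$ and $(A^\uparrow)^{\leftarrow c'}$ coincide, where $c'$ is the topmost bead on runner $j$ of $A^\uparrow$: both abaci have runner $j$ filled in rows $0,\dotsc,|A_j|-2$ and runner $j-1$ filled in rows $0,\dotsc,|A_j|-1$, with all other runners unchanged. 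The bead-count identity makes $c'$ removable in $A^\uparrow$, giving $C_{p^k}(\alpha)=\mu^{\leftarrow c'}\in\mu^-$.

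For surjectivity, let $\beta=(A^\uparrow)^{\leftarrow c'}\in\mu_{p'}^-$ with $c'$ on runner $j$ at row $r$. Since $|\beta|=m-1<p^k$, $\beta$ is automatically a $p^k$-core, hence $w_{p^k}(\beta)=0$. A computation in the same spirit as Lemma~\ref{lem:weightgeneral} shows that sliding $(A^\uparrow)^{\leftarrow c'}$ into $\uparrow$-form requires exactly $|A_j^\uparrow|-|A_{j-1}^\uparrow|-1$ up-moves (and the analogous quantity for $j=0$), independently of $r$. Forcing this quantity to vanish pins down $|A_j^\uparrow|=|A_{j-1}^\uparrow|+1$ and identifies $c'$ as the top bead of runner $j$ in $A^\uparrow$. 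Since $|A_j|=|A_j^\uparrow|$, the same identity holds in $A$, and a pigeonhole argument comparing the bead rows on runners $A_j$ and $A_{j-1}$ produces at least one removable bead $c$ on $A_j$. Setting $\alpha=A^{\leftarrow c}$, the $\uparrow$-form of $A^{\leftarrow c}$ depends only on the updated bead counts per runner, so $C_{p^k}(\alpha)=\beta$; Lemma~\ref{lem:weight} together with Theorem~\ref{thm:oddcriterion} then yield $\alpha\in\lambda_{p'}^-$. The principal technical obstacle is the careful abacus bookkeeping, particularly for the wrap-around case $j=0$ (where the removed bead moves up one row as well as across to runner $p^k-1$) and checking that the pigeonhole count on runner $A_0$ still leaves a removable bead after the always-non-removable bead at position $0$ has been excluded.
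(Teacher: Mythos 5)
Your proposal is correct and takes essentially the same route as the paper: both arguments translate everything to the $p^k$-abacus, use the weight computation (Lemma~\ref{lem:weightgeneral}/Lemma~\ref{lem:weight}) together with Theorem~\ref{thm:oddcriterion} to force the bead-count identity $|A_j|=|A_{j-1}|+1$, observe that $(A^{\leftarrow c})^{\uparrow}$ equals the core's abacus with one removable bead slid across (and, for surjectivity, run the same pigeonhole argument in reverse), and then transfer the $p'$-condition via Theorem~\ref{thm:oddcriterion}. The only differences are presentational: the paper normalizes the core's abacus to have first gap in position $(0,0)$, which places all of its removable beads in row $0$ and rules out the wrap-around case $j=0$ at the outset, whereas you keep the abacus arbitrary and must carry the $j=0$ bookkeeping by hand; also the removable bead $c'$ on runner $j$ of $A^{\uparrow}$ is the \emph{lowest} bead of that runner (rows increase downwards), not the topmost, though the configuration you describe for $(A^{\uparrow})^{\leftarrow c'}$ is the correct one.
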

\begin{proof}
Let $A$ be the $p^k$-abacus configuration for $\mu$ having first gap in position $(0,0)$. It is easy to see that rows $i\ge 1$ must be empty, since $|\mu|=m<p^k$. (We will not need rows $i$ with $|i|>a$, so we may assume row $-a$ is the top row of the abacus and $+a$ the bottom row.) So $|A_0|=a$ and $a\leq |A_j|\leq a+1$, for all $j\in\{0,\ldots, p^k-1\}$. Let $B$ be the $p^k$-abacus configuration for $\lambda$ such that $B^\uparrow=A$. By Proposition~\ref{prop:hooks}, we have $w_{p^k}(\lambda)=a$ and we see that $B$ is obtained from $A$ after performing exactly $a$ down-moves.

Let $c$ be the bead in $B$ such that $B^{\leftarrow c}$ is an abacus configuration for $\alpha$, and suppose $c$ lies on runner $B_j$. Since $\alpha$ is a $p'$-partition of $n-1=ap^k+(m-1)\geq ap^k$ we deduce from Theorem~\ref{thm:oddcriterion} that $w_{p^k}(\alpha)=a$. Hence by Lemma~\ref{lem:weight} we have $|B_j|=1+|B_{j-1}|$ ($j$ cannot be $0$ because $|B_l|=|A_l|\in\{a,a+1\}$ for all $l\in\{0,\ldots, p^k-1\}$). It follows that there exists a bead $d$ in position $(0, j)$ of $A$ and that position $(0,j-1)$ of $A$ is empty. Hence $A^{\leftarrow d}$ is a $p^k$-abacus configuration for $C_{p^k}(\alpha)$, which by Theorem~\ref{thm:oddcriterion} must be a $p'$-partition. Thus $C_{p^k}(\alpha)\in\mu^-_{p'}$ and the map $C_{p^k}: \lambda_{p'}^-\longrightarrow\mu_{p'}^-$ is well-defined.

To show that the map is surjective we proceed as follows. Let $A$ be the $p^k$-abacus configuration for $\mu$ as described above. For any $\beta\in\mu^-_{p'}$ there exists a bead $d$ in $A$ such that $A^{\leftarrow d}$ is a $p^k$-abacus configuration for $\beta$. Let $j\in\{1,\ldots, p^k-1\}$ be such that $d$ is in position $(0,j)$ in $A$ and such that position $(0,j-1)$ is empty.
Let $B$ be the $p^k$-abacus for $\lambda$ described above. Clearly we have that $|B_j|=|A_j|=1+|A_{j-1}|=1+|B_{j-1}|$. Hence there exists a row $y\in\{-a,\dotsc,a\}$ such that position $(y,j-1)$ of $B$ is empty and such that there is a bead (say $e$) in position $(y, j)$. Let $\alpha$ be the partition corresponding to the $p^k$-abacus $B^{\leftarrow e}$. By Lemma~\ref{lem:weight} we deduce that $w_{p^k}(\alpha)=a$. Moreover it is clear that $C_{p^k}(\alpha)=\beta\in\mathrm{Irr}_{p'}(\fS_{n-ap^k})$. By Theorem~\ref{thm:oddcriterion} we deduce that $\alpha\in\lambda^-_{p'}$ and therefore $C_{p^k}$ is surjective.
\end{proof}

\begin{cor}\label{cor:lowerbound}
Let $\lambda\vdash_{p'}n$. Then $|C_{p^k}(\lambda)_{p'}^-|\leq |\lambda_{p'}^-|$.
\end{cor}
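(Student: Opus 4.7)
The plan is to derive this as an immediate consequence of Theorem~\ref{thm:coremap}. That theorem does all the heavy lifting: it exhibits a well-defined map
\[ C_{p^k}\colon \lambda_{p'}^-\longrightarrow \mu_{p'}^-, \qquad \mu:=C_{p^k}(\lambda), \]
and establishes that this map is surjective. Once surjectivity is known, the inequality $|\mu_{p'}^-|\le|\lambda_{p'}^-|$ follows from the elementary fact that a surjection between finite sets cannot have target strictly larger than source.

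So the entire argument consists of applying Theorem~\ref{thm:coremap} and remarking that both $\lambda_{p'}^-\subseteq\mathcal{P}(n-1)$ and $\mu_{p'}^-\subseteq\mathcal{P}(|\mu|-1)$ are finite. The hypothesis $\lambda\vdash_{p'}n$ is needed precisely so that Theorem~\ref{thm:coremap} is applicable, and we should note that we are implicitly using Notation~\ref{not:sec3}, i.e.\ that $n=ap^k+m$ with $0<m<p^k$ and $1\le a\le p-1$, so that $\mu=C_{p^k}(\lambda)$ is a $p'$-partition of $m$ by Theorem~\ref{thm:oddcriterion}.

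There is no serious obstacle: the corollary is essentially just the statement $|Y|\le|X|$ for a surjection $X\twoheadrightarrow Y$. The only point worth flagging is that the result is \emph{stated} for a general $\lambda\vdash_{p'}n$, whereas Notation~\ref{not:sec3} has restricted us to the case in which $n$ is not a pure power $ap^k$; this restriction causes no loss of generality here, since if instead $n=ap^k$ then $\mu=C_{p^k}(\lambda)=\emptyset$ and so $\mu_{p'}^-=\emptyset$, making the inequality trivial. Thus in all cases the corollary reduces to the surjectivity already proved.
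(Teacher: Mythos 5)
Your proposal is correct and matches the paper exactly: the corollary is stated there with no separate proof precisely because it is the immediate consequence of the surjectivity of the map $C_{p^k}\colon\lambda_{p'}^-\to\mu_{p'}^-$ established in Theorem~\ref{thm:coremap}, which is the argument you give. Your side remark about Notation~\ref{not:sec3} is a reasonable piece of care but not an issue, since the corollary sits within the stated range of that standing assumption.
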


Keeping $n=ap^k+m$ as in Notation~\ref{not:sec3}, we now introduce the following notation. Given $\gamma\vdash_{p'}m$, define $$br(n,\gamma):=\mathrm{max}\{|\lambda^-_{p'}|\ :\ \lambda\vdash_{p'}n\ \ \text{and}\ \ C_{p^k}(\lambda)=\gamma\}.$$
Clearly $br(n)$, the main object of our study, is equal to the maximal $br(n,\gamma)$, where $\gamma$ is any $p'$-partition of $m$.
Corollary~\ref{cor:lowerbound} allows us to give the following definition.

\begin{defn}\label{def:N}
Let $n=ap^k+m$ be as in Notation~\ref{not:sec3}, and let $\gamma\vdash_{p'}m$. We define $N(a, p^k, \gamma)\in\mathbb{N}_0$ to be such that $|\gamma_{p'}^-|+N(a, p^k, \gamma)=br(n,\gamma).$
\end{defn}

One of the main goals of the present section is to prove the following fact. 

\begin{prop}\label{prop:Nvalue}
Let $\gamma\vdash_{p'}m$ and let $L=|\gamma_{p'}^-|$. Then 
$N(a,p^k,\gamma) = \Phi(a, L),$
where $\Phi$ is as described in Definition~\ref{def:PhiPsi}.
\end{prop}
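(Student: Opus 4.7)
The plan is to fix the same $p^k$-abacus $A$ for $\gamma$ used in the proof of Theorem \ref{thm:coremap} and to read off $br(n,\gamma)$ runner by runner. Let $S=\{j\in\{0,\dotsc,p^k-1\} : |A_j|=a+1\}$ and $S^*=\{j^*\in S : j^*-1\notin S\}$; the correspondence from that proof identifies $\gamma^-_{p'}$ with a subset $T^*\subseteq S^*$ of size $L$, namely those $j^*\in S^*$ for which removing the bead at $(0,j^*)$ of $A$ yields a $p'$-partition. Any $\lambda\vdash_{p'}n$ with $C_{p^k}(\lambda)=\gamma$ has $w_{p^k}(\lambda)=a$, so its abacus $B$ is obtained from $A$ by a total of $a$ down-moves, say $a_j$ on runner $j$. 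By Lemma \ref{lem:weight} together with Theorem \ref{thm:oddcriterion}, a removable bead of $B$ contributes an element of $\lambda^-_{p'}$ precisely when it lies on runner $B_{j^*}$ for some $j^*\in T^*$. Writing $R_j$ for the set of row-positions of beads on $B_j$, I conclude that
\begin{equation*}
|\lambda^-_{p'}|=\sum_{j^*\in T^*}|R_{j^*}\setminus R_{j^*-1}|.
\end{equation*}

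The crux will be the following single-pair estimate: for each $j^*\in T^*$, writing $b_{j^*}:=a_{j^*-1}+a_{j^*}$,
\begin{equation*}
|R_{j^*}\setminus R_{j^*-1}|\leq f(b_{j^*})+1,
\end{equation*}
with equality achievable for any given value of $b_{j^*}$. The pairs $(j^*-1,j^*)$ for $j^*\in T^*$ are pairwise disjoint (because $j^*\in S$ but $j^*-1\notin S$ for each such $j^*$), so these bounds combine independently. For achievability I would set $a_{j^*-1}=0$ and arrange the partition $\mu$ on runner $j^*$ to contain the rectangle $((f(b_{j^*}))^{f(b_{j^*})+1})$ (of size $f(b_{j^*})(f(b_{j^*})+1)\leq b_{j^*}$), absorbing any surplus moves by pushing the bottom bead of $B_{j^*}$ further down. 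The resulting $f(b_{j^*})+1$ beads of $B_{j^*}$ lying at rows $\geq 0$ are then all removable since $R_{j^*-1}=\{-a,\dotsc,-1\}$.

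For the upper bound I would use a greedy reduction. While $a_{j^*-1}>0$, perform an up-move on runner $j^*-1$; a case analysis on whether the two affected rows carry beads on $B_{j^*}$ shows that in three of the four cases the up-move alone does not decrease $|R_{j^*}\setminus R_{j^*-1}|$ and strictly reduces cost, while in the fourth case the count drops by one but is restored by a compensating down-move on $B_{j^*}$ that keeps $b_{j^*}$ constant. Iterating reduces to the case $a_{j^*-1}=0$ without decreasing the count or increasing the cost. In that situation $|R_{j^*}\setminus R_{j^*-1}|$ equals the number $k$ of beads of $B_{j^*}$ at rows $\geq 0$; a short calculation on the $1$-abacus of runner $j^*$ shows this forces its partition $\mu$ to satisfy $\mu_j\geq j-1$ for $j=1,\dotsc,k$. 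Since $\mu$ is non-increasing, its diagram contains the rectangle $(k-1)^k$, giving $b_{j^*}\geq|\mu|\geq k(k-1)$ and therefore $k\leq f(b_{j^*})+1$.

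Putting everything together, since any unused down-moves can be placed on runners outside the special pairs (contributing nothing to $|\lambda^-_{p'}|$), the $b_{j^*}$ may be chosen freely in $\mathbb{N}_0$ subject to $\sum_{j^*\in T^*} b_{j^*}\leq a$, so
\begin{equation*}
br(n,\gamma)=L+\max\Bigl\{\textstyle\sum_{j^*\in T^*} f(b_{j^*}):\sum b_{j^*}\leq a\Bigr\}=L+\Phi(a,L)
\end{equation*}
by Definition \ref{def:PhiPsi}, and $N(a,p^k,\gamma)=\Phi(a,L)$ as claimed. I expect the main obstacle to be the case analysis underlying the greedy reduction, where one must carefully verify that the compensation down-move is always well-defined and that the reduction terminates with the bounds intact.
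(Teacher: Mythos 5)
Your proposal is correct and follows essentially the same route as the paper: it identifies the relevant runners via the core abacus (the paper's $\mathcal{R}_{A_\gamma}$, Lemma~\ref{lem:red} and Corollary~\ref{cor:removable beads}), reduces to disjoint pairs of runners viewed as $2$-abaci over $T_{(1)}$, proves the single-pair bound $f(b)+1$ by the same normalization to $w(U_0)=0$ with the same four-case compensation and the same displacement count (the paper's Lemma~\ref{lem:T}), and then optimizes over compositions to obtain $\Phi(a,L)$. The only difference is that you re-derive inline what the paper packages as separate lemmas.
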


In order to prove Proposition~\ref{prop:Nvalue}, we need to introduce the following combinatorial concepts.  

\begin{defn}\label{def:Red}
Let $n=ap^k+m$ be as in Notation~\ref{not:sec3}, and let $\gamma\vdash_{p'}m$. Denote by $A_{\gamma}$ the $p^k$-abacus configuration for $\gamma$ having first gap in position $(0,0)$. Define $\mathcal{R}_{A_\gamma}$ to be the subset of $\{0,1,\ldots, p^k-1\}$ such that $j\in\mathcal{R}_{A_\gamma}$ if and only if there is a removable bead $c$ on runner $j$ of $A_\gamma$ such that the partition corresponding to the $p^k$-abacus $A_\gamma^{\leftarrow c}$ is a $p'$-partition of $m-1$.
\end{defn}

Since $A_\gamma$ has first gap in position $(0,0)$ and since $|\gamma|=m<p^k$ we deduce that all removable beads in $A_\gamma$ lie in row $0$. Hence $|\mathcal{R}_{A_\gamma}|=|\gamma_{p'}^-|$. By definition of removable bead, we have in particular that $0\notin\mathcal{R}_{A_\gamma}$, and for $1\le j\le p^k-2$ we have that if $j\in\mathcal{R}_{A_\gamma}$ then $j+1\notin\mathcal{R}_{A_\gamma}$.

\begin{lem}\label{lem:red}
Let $\gamma\vdash_{p'}m$.
Let $\lambda\vdash_{p'}n$ be such that $C_{p^k}(\lambda)=\gamma$ and let $B$ be the $p^k$-abacus for $\lambda$ such that $B^{\uparrow}=A_{\gamma}$.
Let $c$ be a removable bead on runner $j$ of $B$ and let $\mu$ be the partition of $n-1$ corresponding to $B^{\leftarrow c}$. Then $\mu$ is a $p'$-partition if and only if $j\in\mathcal{R}_{A_\gamma}$.
\end{lem}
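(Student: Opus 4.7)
The plan is to translate both sides of the equivalence into conditions on the abacus $A_\gamma$ and then apply Theorem~\ref{thm:oddcriterion} to $\mu$. Since $|\mu|=n-1=ap^k+(m-1)$ with $0\le m-1<p^k$, that criterion says $\mu\in\mathrm{Irr}_{p'}(\fS_{n-1})$ if and only if $w_{p^k}(\mu)=a$ and $C_{p^k}(\mu)\in\mathrm{Irr}_{p'}(\fS_{m-1})$. I will verify each of these conditions directly from $B$, using $|B_l|=|A_{\gamma,l}|$ for every $l$. Before splitting into cases, I record three facts about $A_\gamma$ that follow from the first gap being at $(0,0)$ together with $|\gamma|=m<p^k$: first, $|A_{\gamma,0}|=a$ and $|A_{\gamma,l}|\in\{a,a+1\}$ for every $l$ (as in the proof of Theorem~\ref{thm:coremap}); second, $\gamma$ is itself a $p^k$-core, so the beads of $A_\gamma$ are already pushed up and any removable bead of $A_\gamma$ sits in row $0$ on some runner of index $\ge 1$; consequently $0\notin\mathcal{R}_{A_\gamma}$, in accordance with the remark after Definition~\ref{def:Red}.

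The case $j=0$ is disposed of quickly: Lemma~\ref{lem:weightgeneral} gives $w_{p^k}(\mu)=a+|B_0|-|B_{p^k-1}|-2\le a-2<a$, so $|C_{p^k}(\mu)|>m-1$ and Theorem~\ref{thm:oddcriterion} forbids $\mu$ from being $p'$; combined with $0\notin\mathcal{R}_{A_\gamma}$, the equivalence holds vacuously. For $j\ge 1$, Lemma~\ref{lem:weight} states that $w_{p^k}(\mu)=a$ if and only if $|B_j|=1+|B_{j-1}|$, which by the recorded bead counts is in turn equivalent to $|A_{\gamma,j}|=a+1$ and $|A_{\gamma,j-1}|=a$. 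This is exactly the condition that the bead $d$ sitting in position $(0,j)$ of $A_\gamma$ is removable.

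Assume now that this condition holds. I would finish by identifying $C_{p^k}(\mu)=(B^{\leftarrow c})^{\uparrow}$ with the partition encoded by $A_\gamma^{\leftarrow d}$. Runners $\neq j-1,j$ of $B^{\leftarrow c}$ agree with their counterparts in $A_\gamma=B^{\uparrow}$ and, after sliding up, with those of $A_\gamma^{\leftarrow d}$. Runner $j$ of $B^{\leftarrow c}$ has $|B_j|-1=a$ beads, which slide up to occupy the top $a$ positions, matching runner $j$ of $A_\gamma^{\leftarrow d}$; runner $j-1$ has $|B_{j-1}|+1=a+1$ beads, which slide up to occupy the top $a+1$ positions, again matching $A_\gamma^{\leftarrow d}$. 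Hence $(B^{\leftarrow c})^{\uparrow}=A_\gamma^{\leftarrow d}$, so $\mu$ is $p'$ precisely when $A_\gamma^{\leftarrow d}$ represents a $p'$-partition of $m-1$, i.e.\ when $j\in\mathcal{R}_{A_\gamma}$. The main mildly delicate point is this last identification: the bead $c$ need not sit at the top of runner $j$ in $B$, so one must invoke that the core abacus $(B^{\leftarrow c})^{\uparrow}$ depends only on the multiset of runner sizes of $B^{\leftarrow c}$ and not on the specific rows the beads occupy.
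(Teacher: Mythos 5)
Your argument is correct and follows essentially the same route as the paper: reduce via Theorem~\ref{thm:oddcriterion} to the two conditions $w_{p^k}(\mu)=a$ and $C_{p^k}(\mu)\vdash_{p'}m-1$, control the weight with Lemmas~\ref{lem:weightgeneral} and~\ref{lem:weight} together with the runner counts $|A_{\gamma,0}|=a$ and $|A_{\gamma,l}|\in\{a,a+1\}$, and identify $(B^{\leftarrow c})^{\uparrow}$ with $A_\gamma^{\leftarrow d}$. The only difference is organizational (you case-split on $j$ and on whether $d$ is removable rather than on $j\in\mathcal{R}_{A_\gamma}$ versus $j\notin\mathcal{R}_{A_\gamma}$), and you supply slightly more justification for the identification $(B^{\leftarrow c})^{\uparrow}=A_\gamma^{\leftarrow d}$, which the paper simply asserts.
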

\begin{proof}
Let $A:=A_\gamma$. First suppose $j\in\mathcal{R}_{A}$. In particular, $j\ne 0$. Then
$$|B_j|=|A_j|=|A_{j-1}|+1=|B_{j-1}|+1,$$
so $w_{p^k}(\mu)=a$ by Lemma~\ref{lem:weight}. We also have that $(B^{\leftarrow c})^{\uparrow}$ is an abacus configuration for $C_{p^k}(\mu)$. Moreover if $d$ is the bead in position $(0,j)$ of $A$ then
$(B^{\leftarrow c})^{\uparrow}=A^{\leftarrow d}$. Therefore we deduce that $C_{p^k}(\mu)\in\gamma_{p'}^-$ and hence that $\mu\vdash_{p'}n-1$, by Theorem~\ref{thm:oddcriterion}.

Now suppose that $j\notin\mathcal{R}_A$.
If $j=0$ then $|B_0|=|A_0|\neq |A_{p^k-1}|+2= |B_{p^k-1}|+2$. Hence $w_{p^k}(\mu)\neq a$ by Lemma \ref{lem:weight} and therefore $\mu$ is not a $p'$-partition, by Theorem~\ref{thm:oddcriterion}.
Now we may assume that $j\neq 0$. If $|A_j|\neq |A_{j-1}|+1$ then $|B_j|\neq |B_{j-1}|+1$ and hence $w_{p^k}(\mu)\neq a$, by Lemma~\ref{lem:weight}. In particular $\mu$ is not a $p'$-partition, by Theorem~\ref{thm:oddcriterion}. If $|A_j|=|A_{j-1}|+1$, then $C_{p^k}(\mu)\in\gamma^-$ is represented by the $p^k$-abacus $(B^{\leftarrow c})^{\uparrow}$. Again we have that $(B^{\leftarrow c})^{\uparrow}=A^{\leftarrow d}$, where $d$ is the bead in position $(0,j)$ of $A$. Since $j\notin\mathcal{R}_A$ we deduce that $C_{p^k}(\mu)$ is not a $p'$-partition. It follows that $\mu\vdash n-1$ is not a $p'$-partition, by Theorem~\ref{thm:oddcriterion}.
\end{proof}

\begin{cor}\label{cor:removable beads}
Let $\gamma\vdash_{p'}m$ and let $\lambda\vdash_{p'}n$ be such that $C_{p^k}(\lambda)=\gamma$. Let $B$ be the $p^k$-abacus for $\lambda$ such that $B^{\uparrow}=A_{\gamma}$.
Then $$|\lambda^-_{p'}|=\sum_{j\in\mathcal{R}_{A_\gamma}}\mathrm{Rem}(B_j).$$
\end{cor}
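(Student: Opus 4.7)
The plan is to obtain this corollary as an immediate counting consequence of Lemma~\ref{lem:red}, combined with the standard correspondence between removable beads and removable nodes recorded in the abacus discussion of Section~\ref{sec:2}.

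First I would recall that, for the partition $\lambda$ with $p^k$-abacus configuration $B$, the set $\lambda^-$ of partitions obtained from $\lambda$ by removing a single node is in bijection with the set of removable beads in $B$, via $c\mapsto \mu_c$ where $\mu_c$ is the partition represented by $B^{\leftarrow c}$. This bijection is explicit in the exposition preceding Lemma~\ref{lem:weightgeneral}, where it is noted that $|\lambda^-|=\mathrm{Rem}(B_0)+\cdots+\mathrm{Rem}(B_{p^k-1})$. In particular, distinct removable beads produce distinct elements of $\lambda^-$.

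Next, to count $|\lambda^-_{p'}|$, I would restrict this bijection to those removable beads whose removal yields a $p'$-partition. By Lemma~\ref{lem:red}, if $c$ is a removable bead lying on runner $B_j$, then $\mu_c\vdash_{p'} n-1$ if and only if $j\in\mathcal{R}_{A_\gamma}$. Consequently, the $p'$-partitions in $\lambda^-$ correspond precisely to the removable beads lying on runners indexed by elements of $\mathcal{R}_{A_\gamma}$. Partitioning the count by runner then gives
\[ |\lambda^-_{p'}|\;=\;\sum_{j\in\mathcal{R}_{A_\gamma}}\mathrm{Rem}(B_j), \]
as required.

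There is essentially no obstacle here: the content of the corollary is entirely absorbed by Lemma~\ref{lem:red}, and the rest is bookkeeping via the bead–node correspondence. The only point worth verifying carefully is that the criterion in Lemma~\ref{lem:red} depends only on the runner on which $c$ lies, and not on the particular removable bead chosen on that runner; this is visible from the proof of Lemma~\ref{lem:red}, since its hypotheses are phrased purely in terms of the runner index $j$.
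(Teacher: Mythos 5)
Your proof is correct and follows exactly the route the paper intends: the corollary is stated there without proof as an immediate consequence of Lemma~\ref{lem:red} together with the bead--node correspondence $|\lambda^-|=\sum_j\mathrm{Rem}(B_j)$. Your closing observation that the criterion of Lemma~\ref{lem:red} depends only on the runner index $j$ is the right point to check, and it holds.
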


Recall from Definition~\ref{def:PhiPsi} that $f(x)=\max\{y\in\mathbb{N}_0\ |\ y(y+1)\le x \}.$ The following lemma describes the key relationship between this function $f$ and certain removable beads, which will be necessary for the proof of Proposition~\ref{prop:Nvalue} (below).

\begin{lem}\label{lem:T}
Let $\lambda\in\{\emptyset,(1)\}$ and let $T_\lambda$ denote the $2$-abacus configuration of $\lambda$ having first gap in position $(0,0)$.  Let $x\in\mathbb{N}_0$ and let $\mathcal{T}_\lambda(x)$ be the set of all $2$-abaci $U$ such that $w(U)=x$ and $U^{\uparrow}=T_\lambda$. Then
$$\mathrm{max}\{\mathrm{Rem}(U_1)\ |\ U\in\mathcal{T}_\lambda(x)\}=
\begin{cases}
f(x)+1 & \mathrm{if}\ \lambda=(1),\\
\lfloor\sqrt{x}\rfloor & \mathrm{if}\ \lambda=\emptyset.
\end{cases}$$
\end{lem}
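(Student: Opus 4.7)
The plan is to recast bead configurations on the $2$-abacus as beta-sets (subsets of $\mathbb{N}_0$) and thereby reduce the problem to a clean constrained minimisation. Each $U\in\mathcal{T}_\lambda(x)$ is determined by the pair of partitions $(\nu,\rho)$ read off runners $0$ and $1$ as independent $1$-abaci; these are the components of the $2$-quotient of the underlying partition and satisfy $|\nu|+|\rho|=w(U)=x$. Taking $N$ sufficiently large and shifting, the resulting beta-sets $\tilde S_0,\tilde S_1\subseteq\mathbb{N}_0$ have sizes $|\tilde S_0|=N$ and $|\tilde S_1|\in\{N,N+1\}$, with $|\tilde S_1|=N+1$ exactly when $\lambda=(1)$ (reflecting the extra bead on runner $1$ of $T_{(1)}$). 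A bead on runner $1$ at row $r$ is removable iff $(r,0)$ is a gap on runner $0$, so writing $K:=\mathrm{Rem}(U_1)$ gives $K=|\tilde S_1\setminus\tilde S_0|$. A direct count of partition sizes yields a weight identity of the form
\[\sum_{b\in\tilde S_0}b+\sum_{b\in\tilde S_1}b=x+c(\lambda,N),\]
where $c(\lambda,N)$ depends only on $\lambda$ and $N$.

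For the upper bound I would fix $K$ and minimise the left-hand sum. The constraints $|\tilde S_0|=N$, $|\tilde S_1|\in\{N,N+1\}$ together with $|\tilde S_1\setminus\tilde S_0|=K$ force $|\tilde S_0\cap\tilde S_1|=|\tilde S_1|-K$ and $|\tilde S_0\cup\tilde S_1|=N+K$ in both cases. Splitting the sum as
\[\sum_{b\in\tilde S_0}b+\sum_{b\in\tilde S_1}b=2\sum_{b\in C}b+\sum_{b\in T\setminus C}b,\]
with $T=\tilde S_0\cup\tilde S_1$ and $C=\tilde S_0\cap\tilde S_1$, the observation that elements of $C$ are weighted double shows the minimum is attained by $T=\{0,1,\dotsc,N+K-1\}$ with $C$ the smallest $|C|$ elements of $T$. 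Evaluating the two arithmetic progressions gives $x\geq K^2$ when $\lambda=\emptyset$ and $x\geq K(K-1)$ when $\lambda=(1)$, which rearrange to $K\leq\lfloor\sqrt{x}\rfloor$ and $K\leq f(x)+1$ respectively by the definition of $f$.

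For the matching lower bound I would exhibit explicit configurations. Take $\nu=\emptyset$, so runner $0$ is empty above row $0$; then removable beads on runner $1$ are precisely those at rows $\geq 0$. When $\lambda=\emptyset$ this count equals the Durfee square side of $\rho$, reaching $\lfloor\sqrt{x}\rfloor$ by taking $\rho$ of maximal square shape plus leftover cells. When $\lambda=(1)$, the extra baseline bead at row $0$ shifts the count to $\max\{K:\rho_K\geq K-1\}$, attained with value $f(x)+1$ by taking $\rho$ with $K-1$ parts each of size at least $K$ plus leftovers. The main technical subtlety, which guides the whole proof, is the asymmetry introduced by the extra bead on runner $1$ of $T_{(1)}$: it changes the intersection size from $N-K$ to $N+1-K$ and produces the shift from $K^2$ to $K(K-1)$ in the extremal sum. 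Once the beta-set reformulation is set up, the remainder of the argument runs in parallel for both values of $\lambda$.
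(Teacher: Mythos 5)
Your argument is correct in substance but takes a genuinely different route from the paper's, so it is worth comparing. The paper first normalises (by a local bead-moving argument) to configurations with all of the weight on runner $1$, and then obtains the upper bound by noting that $r$ removable beads force $r$ beads on runner $1$ in distinct non-negative rows, whence $w(U_1)\ge (r-1)r$ for $\lambda=(1)$ (resp.\ $r(r+1)$ for $\lambda=\emptyset$); the lower bound is an explicit abacus. Your beta-set reformulation replaces both the normalisation step and the row-counting by a single constrained minimisation: the identity $\sum_{b\in\tilde S_0}b+\sum_{b\in\tilde S_1}b=x+\binom{N}{2}+\binom{|\tilde S_1|}{2}$, together with $|\tilde S_0\cup\tilde S_1|=N+K$ and $|\tilde S_0\cap\tilde S_1|=|\tilde S_1|-K$ and the observation that intervals minimise the two sums, yields $x\ge K(K-1)$ (resp.\ $x\ge K^2$) directly and treats both values of $\lambda$ uniformly. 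This is cleaner than the paper's argument and buys a transparent explanation of where the thresholds $K(K-1)$ and $K^2$ come from; the paper's version is more hands-on but needs the separate reduction to $w(U_0)=0$ and then says the $\lambda=\emptyset$ case is ``similar.''

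One slip needs fixing in your lower bound for $\lambda=(1)$: the witness is transposed. By your own (correct) formula the number of removable beads is $\max\{K:\rho_K\ge K-1\}$, which requires $\rho$ to have at least $K$ nonzero parts when $K\ge 2$; the shape you describe, ``$K-1$ parts each of size at least $K$,'' has $\rho_K=0$ and only realises $f(x)$ removable beads. (Test $x=2$, so $f(x)+1=2$: $\rho=(2)$ gives one removable bead, whereas $\rho=(1,1)$ gives two.) The correct extremal shape is $\rho=\bigl((K-1)^K\bigr)$ with the $x-K(K-1)$ leftover cells added to $\rho_1$, i.e.\ $K$ parts each of size at least $K-1$. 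Since the minimal total is still $K(K-1)$, nothing else in your computation changes.
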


\begin{proof}
This is clear if $x=0$ or $x=1$, so we may assume now that $x\ge 2$ (and hence $f(x)>0$). We first fix $\lambda=(1)$; this is the case that we will need to use in the proof of Proposition~\ref{prop:Nvalue} below. Since $\lambda$ is now fixed, we ease the notation by letting $T_{(1)}=T$ and $\mathcal{T}_{(1)}(x)=\mathcal{T}(x)$, for all $x\in\mathbb{N}_0$. Moreover, 
let $F(x):=\mathrm{max}\{\mathrm{Rem}(U_1)\ |\ U\in\mathcal{T}(x)\}$.
We first show that there exists $A\in\mathcal{T}(x)$ such that $\mathrm{Rem}(A_1)=F(x)$ and  such that $w(A_0)=0$ (equivalently $w(A_1)=x$).

Let $U\in\mathcal{T}(x)$ be such that $w(U_0)= \ell$ and $\mathrm{Rem}(U_1)=r$ for some $l\in\{1,2,\dotsc,x\}$ and some $r\in\{0,1,\dotsc,F(x)\}$. Then there exists a $2$-abacus $V\in\mathcal{T}(y)$ for some $y\leq x$ such that $w(V_0)<\ell$ and $\mathrm{Rem}(V_1)\geq r$. 
This follows from the following observation. Since $\ell\geq 1$ there exists $i\in\mathbb{Z}$ such that there is a bead in position $(i,0)$ of $U$ and such that position $(i-1,0)$ of $U$ is empty.
Denoting beads by $\textsf{X}$ and gaps by $\textsf{O}$, consider the four possibilities for rows $i-1$ and $i$ of $U$:
$${}^{i-1}_i\qquad\quad
{}^\textsf{O}_\textsf{X} {}^\textsf{O}_\textsf{X}\qquad\quad {}^\textsf{O}_\textsf{X}{}^\textsf{O}_\textsf{O}\qquad\quad {}^\textsf{O}_\textsf{X}{}^\textsf{X}_\textsf{X}\qquad\quad {}^\textsf{O}_\textsf{X}{}^\textsf{X}_\textsf{O}$$
In the first three instances, we can move the bead in $(i,0)$ to $(i-1,0)$, to obtain the desired abacus configuration $V$. In the fourth case, we need to additionally move the bead in $(i-1,1)$ to $(i,1)$.
Hence, if $B\in\mathcal{T}(x)$ is such that $\mathrm{Rem}(B_1)=F(x)$ then there exists $y\leq x$ and $A'\in\mathcal{T}(y)$ such that $\mathrm{Rem}(A'_1)=F(x)$, $w(A'_0)=0$ and $w(A'_1)=y$. Let $(i,1)$ be the lowest position occupied by a bead (say $d$) in $A'$. Moving $d$ to position $(i+(x-y), 1)$ we obtain a $2$-abacus configuration $A\in\mathcal{T}(x)$ such that 
$\mathrm{Rem}(A_1)=\mathrm{Rem}(A'_1)=F(x)$, $w(A_0)=0$ and $w(A_1)=x$, as desired.

We want to prove that $F(x)=f(x)+1$. First suppose for a contradiction that $F(x)\ge f(x)+2$, and let $A\in\mathcal{T}(x)$ be such that $\mathrm{Rem}(A_1)=F(x)$ and $w(A_0)=0$. By construction there exists integers $0\leq j_1<j_2<\cdots<j_{f(x)+2}$ such that there is a bead in position $(j_k, 1)$ of $A$ for all $k\in\{1,\ldots, f(x)+2\}$. This implies that $w(A)=w(A_1)\geq (f(x)+1)(f(x)+2)>x$, a contradiction. Hence $F(x)\leq f(x)+1$.

Now let $y:=f(x)(f(x)+1)\leq x$. Let $B$ be the $2$-abacus configuration obtained from $T$ by first sliding down the bead in position $(0,1)$ to position $(f(x)+x-y,1)$ and then sliding down the bead in position $(i,1)$ to position $(i+f(x),1)$ for $i=-1,-2,\dotsc,-f(x)$. Clearly $B\in\mathcal{T}(x)$ and $\mathrm{Rem}(B_1)=f(x)+1$. We conclude that $F(x)=f(x)+1$, as desired. 

The case $\lambda=\emptyset$ is similar.
\end{proof}

\begin{proof}[Proof of Proposition~\ref{prop:Nvalue}]
Let $\lambda\vdash_{p'}n$ be such that $C_{p^k}(\lambda)=\gamma$ and $|\lambda_{p'}^-|=br(n,\gamma)$. Let $B$ be the $p^k$-abacus for $\lambda$ such that $B^{\uparrow}=A_{\gamma}$. In particular, $B$ is obtained from $A_\gamma$ by performing $a$ down-moves. Let $\mathcal{R}_{A_\gamma}=\{j_1,\dotsc,j_L \}$. Then by Corollary~\ref{cor:removable beads}, we have $$L + N(a,p^k,\gamma) = br(n,\gamma) = |\lambda_{p'}^-| = \sum_{i=1}^L \operatorname{Rem}(B_{j_i}).$$ 

Let $a_i=w(B_{j_i-1})+w(B_{j_i})$ for $i\in\{1,2,\dotsc,L\}$, so $a_1+\cdots+a_L\le a$. Since no two numbers in $\mathcal{R}_{A_{\gamma}}$ are consecutive (as remarked after Definition~\ref{def:Red}), we can regard the pairs of runners of $(B_{j_1-1}, B_{j_1})$, $(B_{j_2-1}, B_{j_2}),\ldots, (B_{j_L-1}, B_{j_L})$ as $L$ disjoint $2$-abaci, whose $2$-cores are all equal to the $2$-abacus $T_{(1)}$ considered in Lemma~\ref{lem:T}. It is easy to see that the $2$-abacus identified by the pair $(B_{j_i-1}, B_{j_i})$ lies in $\mathcal{T}_{(1)}(a_i)$ for all $i\in\{1,\ldots, L\}$.
Lemma~\ref{lem:T}, together with the maximality of $|\lambda_{p'}^-|$ among all the $p'$-partitions of $n$ with $p^k$-core equal to $\gamma$, allows us to deduce that 
$\mathrm{Rem}(B_{j_i})=f(a_i)+1$, for all $i\in\{1,\ldots, L\}$.
Hence 
we obtain
$$N(a,p^k,\gamma) = \sum_{i=1}^L \mathrm{Rem}(B_{j_i})-L = \sum_{i=1}^L f(a_i).$$
We conclude the proof by showing that
$$N(a,p^k,\gamma) = \max\left\{ \sum_{i=1}^L f(a'_i)\ |\ a'_1 + \cdots + a'_L \le a,\ a'_i\in\mathbb{N}_0\ \forall\ i\right\}=\Phi(a, L).$$

Suppose for a contradiction that there exists a natural number $y\leq a$ and $(a'_1,\ldots, a'_L)$ a composition of $y$ such that $\sum_{i=1}^L f(a'_i)>N(a,p^k,\gamma)$. Since $f$ is a non-decreasing function, without loss of generality we can assume that $y=a$. Then by using constructions analogous to those in the proof of Lemma~\ref{lem:T}, we can construct a partition $\tilde{\lambda}\vdash_{p'}n$ with $C_{p^k}(\tilde\lambda)=\gamma$, $w_{p^k}(\tilde\lambda)=a$ and $p^k$-abacus configuration $\tilde{B}$ satisfying $\tilde{B}^{\uparrow}=A_\gamma$ such that $w(\tilde{B}_{j_i})=a_i'$ and $\mathrm{Rem}(\tilde{B}_{j_i})=f(a'_i)+1$ for all $i\in\{1,\ldots, L\}$. This implies that $$br(n,\gamma)\geq |\tilde{\lambda}_{p'}^-|=L+\sum_{i=1}^L f(a'_i)>L+N(a,p^k,\gamma)=|\lambda_{p'}^-|=br(n,\gamma),$$
which is a contradiction. Hence $N(a,p^k,\gamma)=\Phi(a,L)$.
\end{proof}

\begin{prop}\label{prop:br_m}
Let $\gamma\vdash_{p'}m$. Then $br(n)=br(n,\gamma)$ if and only if 
$|\gamma_{p'}^-|=br(m)$. In particular, $br(n)=br(m)+\Phi(a,br(m))$.
\end{prop}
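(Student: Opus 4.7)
The plan is to combine the explicit formula for $N(a,p^k,\gamma)$ from Proposition~\ref{prop:Nvalue} with a simple monotonicity observation about $\Phi$. By Definition~\ref{def:N} and Proposition~\ref{prop:Nvalue}, for every $\gamma\vdash_{p'}m$ we have
$$br(n,\gamma) = L + \Phi(a,L), \qquad \text{where } L := |\gamma_{p'}^-|,$$
so the whole proposition amounts to understanding how $\max_\gamma br(n,\gamma)$ depends on $L$.

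The first step is to verify that $br(n) = \max\{br(n,\gamma)\ :\ \gamma\vdash_{p'}m\}$. One inclusion is Theorem~\ref{thm:oddcriterion}: the $p^k$-core of any $\lambda\vdash_{p'}n$ is a $p'$-partition of $m$. The other inclusion requires that every $\gamma\vdash_{p'}m$ actually arises as $C_{p^k}(\lambda)$ for some $\lambda\vdash_{p'}n$. This is immediate by taking the abacus $A_\gamma$ from Definition~\ref{def:Red}, performing $a$ down-moves on a single runner to obtain $B$ with $B^\uparrow=A_\gamma$, and applying Theorem~\ref{thm:oddcriterion} to the corresponding partition $\lambda$.

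The second and key step is to show that the function $g(L) := L + \Phi(a,L)$ is \emph{strictly} increasing in $L\in\mathbb{N}$. Indeed, from the definition of $\Phi$, any tuple $(a_1,\dots,a_{L-1})\in\mathbb{N}_0^{L-1}$ with $a_1+\cdots+a_{L-1}\le a$ extends to $(a_1,\dots,a_{L-1},0)\in\mathbb{N}_0^L$ with the same value of $\sum_i f(a_i)$, since $f(0)=0$. Hence $\Phi(a,L)\ge \Phi(a,L-1)$, and therefore $g(L)\ge g(L-1)+1 > g(L-1)$.

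Combining these two steps, as $\gamma$ ranges over $\Irr_{p'}(\fS_m)$ the value $L=|\gamma_{p'}^-|$ ranges exactly over $\mathcal{E}_m$. Since $g$ is strictly increasing, $br(n,\gamma)=g(L)$ is maximized precisely when $L=\max\mathcal{E}_m=br(m)$. This yields both the "if and only if" assertion and the closed-form equality $br(n)=br(m)+\Phi(a,br(m))$. There is no substantial obstacle: once Proposition~\ref{prop:Nvalue} is in hand, the statement is essentially a monotonicity argument.
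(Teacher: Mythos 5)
Your proof is correct and follows essentially the same route as the paper's: both reduce the statement to Proposition~\ref{prop:Nvalue} together with the monotonicity of $\Phi(a,\cdot)$, the paper phrasing this as a chain of inequalities $br(n)\geq br(n,\delta)\geq br(n,\gamma)=br(n)$ that must collapse to equalities. Your explicit observation that $L\mapsto L+\Phi(a,L)$ is \emph{strictly} increasing is a welcome detail, since it is precisely what the paper's ``whence equalities hold'' step needs in order to extract $|\gamma_{p'}^-|=br(m)$ from the equality of the two sums.
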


\begin{proof}
First suppose that $br(n)=br(n,\gamma)$. Let $\lambda\vdash_{p'}n$ be such that $C_{p^k}(\lambda)=\gamma$ and $|\lambda_{p'}^-|=br(n)$, so that 
$br(n) = |\gamma_{p'}^-| + \Phi(a,|\gamma_{p'}^-|)$
by Proposition~\ref{prop:Nvalue}. Let $\delta\vdash_{p'}m$ be such that $|\delta_{p'}^-|=br(m)$. Then, since $\Phi(X,Y)$ is non-decreasing in each argument (when the other argument is fixed), we have
$$br(n)\geq br(n,\delta) = |\delta_{p'}^-|+\Phi(a,|\delta_{p'}^-|)=br(m)+\Phi(a,br(m))\geq |\gamma_{p'}^-|+\Phi(a,|\gamma_{p'}^-|)=br(n),$$
whence equalities hold in the above. This proves all three statements: $br(m)=|\gamma_{p'}^-|$ gives the only if direction; $br(n)=br(n,\delta)$ gives the if direction (with $\delta$ in place of $\gamma$); and the final assertion is clear.
\end{proof}

This is enough to deduce that the second statement of Theorem A holds. 

\begin{cor}\label{cor:itera}
Let $n=\sum_{j=1}^t a_jp^{n_j}$ be the $p$-adic expansion of $n$, for some $0\leq n_1<\cdots <n_t$.
Let $m_j=\sum_{i=1}^{j-1} a_ip^{n_i}$, then $$br(n) 
= br(a_1p^{n_1}) + \sum_{j=2}^{t} \Phi(a_j, br(m_j)).$$
\end{cor}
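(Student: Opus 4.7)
The plan is to prove Corollary~\ref{cor:itera} by straightforward induction on $t$, the number of nonzero digits in the $p$-adic expansion of $n$, feeding Proposition~\ref{prop:br_m} into itself repeatedly.

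The base case $t=1$ is immediate: the claimed formula reduces to $br(n) = br(a_1 p^{n_1})$, with the sum $\sum_{j=2}^1$ being empty.

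For the inductive step, assume the result for expansions of length $t-1$ and let $n = \sum_{j=1}^t a_j p^{n_j}$ with $t\ge 2$. Set $k = n_t$, $a = a_t$, and $m = m_t = \sum_{i=1}^{t-1} a_i p^{n_i}$, so that $n = a p^k + m$. I need to verify the hypotheses of Proposition~\ref{prop:br_m} (the standing assumption of Notation~\ref{not:sec3}): we have $a_t \in \{1,\dotsc,p-1\}$ since it is a $p$-adic digit, $k = n_t \ge n_{t-1}+1 \ge 1$ since $t\ge 2$, and $0 < m_t < p^{n_t}$ because $m_t \ge a_1 p^{n_1} \ge 1$ and each $p^{n_i}$ for $i < t$ satisfies $a_i p^{n_i} < p^{n_i+1} \le p^{n_t}$ (with the digits summing below $p^{n_t}$ by the $p$-adic bound). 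Proposition~\ref{prop:br_m} then yields
\[
br(n) \;=\; br(m_t) + \Phi\bigl(a_t, br(m_t)\bigr).
\]
Applying the inductive hypothesis to $m_t$, whose $p$-adic expansion is $\sum_{i=1}^{t-1} a_i p^{n_i}$ of length $t-1$ and whose partial sums $m_j$ are exactly the same as those appearing in the statement for $n$, we obtain
\[
br(m_t) \;=\; br(a_1 p^{n_1}) + \sum_{j=2}^{t-1} \Phi\bigl(a_j, br(m_j)\bigr).
\]
Substituting this into the previous display yields the desired recursion.

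The only genuinely nontrivial ingredient is Proposition~\ref{prop:br_m} itself, which has already been established; once that is in hand, the corollary is a one-line unwinding. The main potential pitfall is the bookkeeping needed to confirm that the hypotheses of Proposition~\ref{prop:br_m} really hold at every stage of the induction (in particular that $m_t$ lies strictly between $0$ and $p^{n_t}$, and that $n_t \ge 1$ whenever $t \ge 2$), but these are immediate consequences of the definition of the $p$-adic expansion.
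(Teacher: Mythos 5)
Your proof is correct and matches the paper's approach: the paper derives this corollary by iterating Proposition~\ref{prop:br_m}, which is precisely the induction on $t$ you carry out, and your verification of the hypotheses of Notation~\ref{not:sec3} at each stage (namely $a_t\in\{1,\dotsc,p-1\}$, $n_t\ge 1$, and $0<m_t<p^{n_t}$) is the right bookkeeping. Nothing further is needed.
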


\begin{proof}[Proof of Theorem C]
This is a straightforward consequence of Lemma~\ref{lem:PhiBound} and Corollary~\ref{cor:itera}.
\end{proof}

In the last part of this section we aim to complete the proof of Theorem A, by studying the set $\mathcal{E}_n=\{|\lambda^-_{p'}|\ :\ \lambda\vdash n\ \ \text{and}\ \ p\nmid\chi^\lambda(1)\}.$
We first state and assume the following theorem. 
\begin{thm}\label{thm:T-apk}
Let $p$ be a prime, $k\in\mathbb{N}_0$ and $a\in\{1,2,\ldots, p-1\}$. Then $\mathcal{E}_{ap^k}=\{1,2,\ldots,br(ap^k)\}$.
\end{thm}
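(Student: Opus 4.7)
The inclusion $\mathcal{E}_{ap^k}\subseteq\{1,\ldots,br(ap^k)\}$ is immediate from $br(ap^k)=\max\mathcal{E}_{ap^k}$ together with the already-recorded fact that $|\lambda^-_{p'}|\ge 1$ whenever $\lambda\vdash_{p'} ap^k$. For the reverse inclusion I would exhibit, for each $r\in\{1,\ldots,br(ap^k)\}$, a $p'$-partition of $ap^k$ whose number of $p'$-descendants is exactly $r$. The extremes are immediate: $r=1$ is realised by $(ap^k)$, whose restriction to $\fS_{ap^k-1}$ is the trivial character, and $r=br(ap^k)$ is realised by definition.

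The first substantial step is to establish the analogue, in the present case $m=0$, of Corollary~\ref{cor:removable beads}. If the $p^k$-abacus $B$ for $\lambda\vdash_{p'} ap^k$ is chosen with total bead count a multiple of $p^k$, then $B^\uparrow$ is the balanced abacus of $\emptyset$, with equal runner-bead counts $|B_j|=N$, and $B$ is obtained from $B^\uparrow$ by $a$ down-moves. For a removable bead $c$ on runner $j$, Lemma~\ref{lem:weightgeneral} yields $w_{p^k}(B^{\leftarrow c})=a-1$ if $j\ne 0$ and $a-2$ if $j=0$; combined with Theorem~\ref{thm:oddcriterion} applied to $n-1=ap^k-1$, this forces any $p'$-descendant to come from a removal on some runner $j\ge 1$, and moreover requires the induced $p^k$-core $\nu_j:=C_{p^k}(B^{\leftarrow c})$ to be a $p'$-partition of $p^k-1$. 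The abacus of $\nu_j$ has runner-bead counts $N$ everywhere except $N+1$ on runner $j-1$ and $N-1$ on runner $j$, so depends only on $j$; whether $\nu_j$ is $p'$ is decided by Theorem~\ref{thm:oddcrit2} applied recursively to the deeper $p$-adic levels. Setting $\mathcal{J}:=\{j\in\{1,\ldots,p^k-1\}\colon \nu_j\vdash_{p'}p^k-1\}$, one obtains the desired formula
\[
|\lambda^-_{p'}|\;=\;\sum_{j\in\mathcal{J}}\mathrm{Rem}(B_j).
\]

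With this formula in hand, the intermediate values are produced by varying $B$ subject to $|B_j|=N$ and total down-moves $a$. Applying Lemma~\ref{lem:T} with $\lambda=\emptyset$ to each consecutive pair of runners $(B_{j-1},B_j)$ for $j\in\mathcal{J}$ gives $\mathrm{Rem}(B_j)\le\lfloor\sqrt{w(B_{j-1})+w(B_j)}\rfloor$ and exhibits extremal 2-abaci attaining this bound. Starting from a configuration realising the maximum $br(ap^k)$ (as built in the proof of Theorem B), I would iteratively perform a local abacus move that decreases $\sum_{j\in\mathcal{J}}\mathrm{Rem}(B_j)$ by exactly one: shift a single down-move within a pair $(B_{j-1},B_j)$ for some $j\in\mathcal{J}$ so that one previously-removable bead becomes non-removable while no new removable beads appear elsewhere. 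Iterating produces partitions $\lambda_r\vdash_{p'} ap^k$ with $|\lambda_r^-_{p'}|=r$ for every $r\in\{1,\ldots,br(ap^k)\}$.

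The main obstacle is the treatment of consecutive indices in $\mathcal{J}$: when $j$ and $j+1$ both lie in $\mathcal{J}$, the pairs $(B_{j-1},B_j)$ and $(B_j,B_{j+1})$ share runner $j$, so a naive local move can affect two summands simultaneously. Controlling this interaction is exactly the phenomenon responsible for the exceptional clause $k=1,\ p/2<a<p$ in Theorem B, and verifying that each induction step truly decreases the count by one without side-effects requires the sort of case analysis that underpins that proof. This is why Theorems B and~\ref{thm:T-apk} are naturally established together in Section~\ref{sec:apk}.
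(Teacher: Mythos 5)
Your reduction is set up correctly as far as it goes: the inclusion $\mathcal{E}_{ap^k}\subseteq\{1,\dotsc,br(ap^k)\}$ is indeed immediate, the value $1$ is realised by $(ap^k)$, and your counting formula $|\lambda^-_{p'}|=\sum_{j\in\mathcal{J}}\mathrm{Rem}(B_j)$ is correct and is a nice uniform analogue of Corollary~\ref{cor:removable beads} for the case $m=0$ (one can check that $\nu_j$ is the hook $(j,1^{p^k-1-j})$, which is a $p'$-partition of $p^k-1$ precisely when $p\nmid j$; for $k=1$ this recovers Lemma~\ref{lem:Za}). The problem is everything after that. The entire content of the theorem is the existence, for each $r$ with $1<r<br(ap^k)$, of a $p'$-partition realising $r$ exactly, and your proposal replaces this with the assertion that one can ``iteratively perform a local abacus move that decreases $\sum_{j\in\mathcal{J}}\mathrm{Rem}(B_j)$ by exactly one.'' No such move is exhibited, and you yourself concede that verifying each step ``requires the sort of case analysis that underpins'' Theorem B. That concession is precisely the gap: a down-move shifted within the pair $(B_{j-1},B_j)$ alters runner $B_{j-1}$, which simultaneously governs $\mathrm{Rem}(B_{j-1})$ (relevant when $j-1\in\mathcal{J}$) and the removability of beads on $B_j$; since $\mathcal{J}=\{j:p\nmid j\}$, consecutive indices in $\mathcal{J}$ are the rule rather than the exception, and nothing you say rules out the sum jumping by $2$ or not moving at all at some stage. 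Without concrete configurations and a verified count at each step, the reverse inclusion is not proved.

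For comparison, the paper does not attempt a uniform descent. It splits into the same regimes as Theorem B and writes down explicit families in each: Proposition~\ref{prop:k=0} for $k=0$; Proposition~\ref{prop:consec, 2a case} for $k=1$ with $a<p/2$ and for $k\ge 2$, where the partitions $\lambda(j)$ and $\beta(j)$ are specified on the $p$-abacus (not the $p^k$-abacus) and each count $|\lambda(j)^-_{p'}|$ is obtained by identifying exactly which removable beads yield partitions failing Theorem~\ref{thm:oddcrit2}; and Proposition~\ref{prop:consec, floor case} for $k=1$ with $a>p/2$, where one starts from the extremal configuration $B_\beta$ of Lemma~\ref{lem:beta} and performs explicitly described moves, each checked to lower the count by exactly $2$, with a separate construction for the odd values. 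To salvage your approach you would need at minimum (a) an analogue of Lemma~\ref{lem:T2} for pairs of type $T_\emptyset$, showing every value in $\{0,1,\dotsc,\lfloor\sqrt{x}\rfloor\}$ of $\mathrm{Rem}(U_1)$ is attained, and (b) a proof that weight can be redistributed among the overlapping pairs one unit of $\mathrm{Rem}$ at a time --- and (b) is exactly where the parity obstruction of Lemma~\ref{lem:delta=2} lives, so it cannot be waved away.
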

The proof of Theorem~\ref{thm:T-apk} is rather more technical, and so has been postponed to Section~\ref{sec:apk}. More precisely, Theorem~\ref{thm:T-apk} follows from Propositions~\ref{prop:k=0},~\ref{prop:consec, 2a case} and~\ref{prop:consec, floor case}, which are proved in Section~\ref{sec:apk} below.

The next statement extends the observations already made in Lemma~\ref{lem:T}, and is crucial to completing the description of the set $\mathcal{E}_n$. 

\begin{lem}\label{lem:T2}
Let $B=T_{(1)}$ denote the $2$-abacus configuration of the partition $(1)$ having first gap in position $(0,0)$. Let $x\in\mathbb{N}_0$ and let $\mathcal{T}(x)$ be the set consisting of all $2$-abaci $U$ such that $w(U)=x$ and $U^{\uparrow}=B$. Then $\{\mathrm{Rem}(U_1)\ |\ U\in\mathcal{T}(x)\}=\{1,2,\ldots, f(x)+1\}.$
\end{lem}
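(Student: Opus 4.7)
The plan is to establish the set equality in three parts. First, the upper bound $\operatorname{Rem}(U_1)\le f(x)+1$ is immediate from Lemma~\ref{lem:T}, so no further work is needed there. Second, we need a lower bound $\operatorname{Rem}(U_1)\ge 1$. Third, we need to exhibit, for every $j\in\{1,\ldots,f(x)+1\}$, some $U\in\mathcal{T}(x)$ with $\operatorname{Rem}(U_1)=j$.

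For the lower bound, I would use a straightforward bead-counting argument. The base abacus $T_{(1)}$ carries exactly one more bead on runner $1$ than on runner $0$ in any sufficiently large finite truncation, and down-moves do not change the bead count on either runner, so the same difference $|U_1|=|U_0|+1$ persists for every $U\in\mathcal{T}(x)$. If $\operatorname{Rem}(U_1)$ were $0$, then every bead $(r,1)$ of $U$ would sit to the immediate right of a bead $(r,0)$, forcing the set of rows occupied on runner $1$ to be a subset of the set of rows occupied on runner $0$, and hence $|U_1|\le|U_0|$, contradicting the strict inequality.

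For the realisability step, I would mirror the explicit construction already used in the proof of Lemma~\ref{lem:T}. The inequality $j(j-1)\le f(x)(f(x)+1)\le x$ shows that one can afford to spend the first $j(j-1)$ down-moves relocating the top $j$ beads of runner $1$ of $T_{(1)}$ into a staircase occupying rows $0,1,\ldots,j-1$ (slide the bead originally at $(0,1)$ first, then the bead at $(-1,1)$, and so on down to the bead at $(-(j-1),1)$, in this order so that no collisions occur), and the remaining $x-j(j-1)$ down-moves sliding the bottom staircase bead further down runner $1$. The runner $0$ is never touched, so the positions $(r,0)$ with $r\ge 0$ are still empty; consequently the beads on runner $1$ occupying non-negative rows---namely, the $j-1$ staircase beads left in place and the relocated former bottom bead---are exactly the removable ones on runner $1$, giving $\operatorname{Rem}(U_1)=j$ on the nose.

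The main point requiring care is the edge case $j=1$, where the construction degenerates: no beads are moved in the staircase stage, and the entire contribution comes from the single bead at $(0,1)$ slid down by $x$ rows. A direct check confirms that this yields exactly one removable bead on runner $1$, consistent with the formula. Beyond that boundary verification, the argument is essentially combinatorial bookkeeping closely parallel to the abacus manipulations of Section~\ref{sec:AC}.
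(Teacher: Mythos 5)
Your proof is correct and follows essentially the same route as the paper: the upper bound is quoted from Lemma~\ref{lem:T}, and each value $j\in\{1,\dotsc,f(x)+1\}$ is realised by the same staircase configuration (beads at rows $0,\dotsc,j-2$ of runner $1$ plus one bead pushed down to row $x-(j-1)^2$, matching the paper's $U(r)$ with $r=j-1$). Your bead-counting argument that $\operatorname{Rem}(U_1)\ge 1$, which rules out $0$ from the set, is a point the paper leaves implicit but is a welcome and correct addition.
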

\begin{proof}
From Lemma~\ref{lem:T} we know that $f(x)+1$ is the maximal value in $\{\mathrm{Rem}(U_1)\ |\ U\in\mathcal{T}(x)\}$. For any $r\in\{0,1,\ldots, f(x)\}$, let $U(r)$ be the $2$-abacus configuration obtained from $B$ by first sliding down the bead in position $(0,1)$ to position $(x-r(r+1), 1)$
and then (if $r>0$) sliding down the bead in position $(i,1)$ to position $(i+r,1)$ for $i=-1,-2,\dotsc,-r$. 
Clearly $U(r)\in\mathcal{T}(x)$ and $\mathrm{Rem}(U(r)_1)=r+1$.
\end{proof}

\begin{thm}\label{thm:consec}
Let $n\in\mathbb{N}$ and let $p$ be a prime. Let $n = \sum_{j=1}^t a_jp^{n_j}$ be the $p$-adic expansion of $n$, for some $0\le n_1<n_2<\cdots<n_t$. Then $\mathcal{E}_n = \{1,2,\dotsc,br(n)\}$.
\end{thm}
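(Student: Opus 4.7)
The plan is to induct on $t$, the number of nonzero digits in the $p$-adic expansion of $n$. The base case $t = 1$ is $n = a_1 p^{n_1}$, covered by Theorem~\ref{thm:T-apk}. For $t \geq 2$, write $n = ap^k + m$ with $a = a_t$, $k = n_t \geq 1$, and $m = \sum_{j=1}^{t-1} a_j p^{n_j}$, so that $0 < m < p^k$. The inductive hypothesis gives $\mathcal{E}_m = \{1, 2, \ldots, br(m)\}$, and Proposition~\ref{prop:br_m} yields $br(n) = br(m) + \Phi(a, br(m))$.

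Let $\gamma \vdash_{p'} m$ and set $L := |\gamma^-_{p'}|$. The key step is to prove that $\mathcal{V}_\gamma := \{|\lambda^-_{p'}| : \lambda \vdash_{p'} n,\ C_{p^k}(\lambda) = \gamma\}$ equals $\{L, L+1, \ldots, L + \Phi(a, L)\}$. Reprising the setup of Proposition~\ref{prop:Nvalue}, if $B$ denotes the $p^k$-abacus of $\lambda$ with $B^\uparrow = A_\gamma$ and $\mathcal{R}_{A_\gamma} = \{j_1, \ldots, j_L\}$, Corollary~\ref{cor:removable beads} gives $|\lambda^-_{p'}| = \sum_{i=1}^L \mathrm{Rem}(B_{j_i})$, and the disjoint pairs $(B_{j_i-1}, B_{j_i})$ form 2-abaci with core $T_{(1)}$ and weights $a_i \in \mathbb{N}_0$ with $\sum_i a_i \leq a$. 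Lemma~\ref{lem:T2} then asserts that $\mathrm{Rem}(B_{j_i})$ may take any value in $\{1, 2, \ldots, f(a_i) + 1\}$. Conversely, given any tuple $(r_i)_{i=1}^L$ with $r_i \geq 1$ and $\sum_i (r_i - 1) r_i \leq a$, setting $a_i := (r_i - 1) r_i$ and invoking Lemma~\ref{lem:T2} constructs a $p^k$-abacus $B$ realising $\mathrm{Rem}(B_{j_i}) = r_i$ for each $i$; the leftover weight $a - \sum_i a_i$ can be absorbed either into runners outside $\bigcup_i \{j_i-1, j_i\}$ or, if none exist, by enlarging some $a_i$ (the inequality $r_i \leq f(a_i) + 1$ is preserved since $f$ is non-decreasing). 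The resulting $\lambda$ is a $p'$-partition of $n$ with $C_{p^k}(\lambda) = \gamma$.

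Setting $s_i := r_i - 1 \in \mathbb{N}_0$ reduces the remaining question to computing the set $S := \{\sum_i s_i : s_i \in \mathbb{N}_0,\ \sum_i s_i(s_i+1) \leq a\}$, whose maximum is $\Phi(a, L)$ by Definition~\ref{def:PhiPsi}. Since decrementing any positive $s_i$ by one preserves the constraint and reduces $\sum_i s_i$ by exactly one, $S = \{0, 1, \ldots, \Phi(a, L)\}$, and hence $\mathcal{V}_\gamma = \{L, L+1, \ldots, L + \Phi(a, L)\}$ as claimed. Applying the inductive hypothesis to $m$,
\[ \mathcal{E}_n = \bigcup_{\gamma \vdash_{p'} m} \mathcal{V}_\gamma = \bigcup_{L=1}^{br(m)} \{L, L+1, \ldots, L + \Phi(a, L)\}, \]
which equals $\{1, 2, \ldots, br(n)\}$: every integer $q \in \{1, \ldots, br(m)\}$ is the left endpoint of the $L = q$ interval, while every integer $q \in \{br(m), \ldots, br(n)\}$ lies in the $L = br(m)$ interval, whose right endpoint is $br(m) + \Phi(a, br(m)) = br(n)$. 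The main obstacle is the appeal to Lemma~\ref{lem:T2} in its full strength: realising each intermediate $\mathrm{Rem}$ value and absorbing slack down-moves both rely on the fact that $\mathrm{Rem}(B_{j_i})$ ranges over the entire set $\{1, 2, \ldots, f(a_i) + 1\}$, and not merely attains its maximum; this is where Lemma~\ref{lem:T2} strictly strengthens Lemma~\ref{lem:T}.
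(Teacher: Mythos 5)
Your proof is correct and follows essentially the same route as the paper: induction on the $p$-adic length with Theorem~\ref{thm:T-apk} as base case, Lemma~\ref{lem:T2} applied to the pairs of runners $(B_{j_i-1},B_{j_i})$ to realise every value in $\{L,L+1,\dotsc,L+\Phi(a,L)\}$ for a fixed core $\gamma$, and the inductive hypothesis on $m$ to let $L=|\gamma^-_{p'}|$ range over $\{1,\dotsc,br(m)\}$. The only (harmless) difference is that you determine the full set $\mathcal{V}_\gamma$ for every core $\gamma$ and then take a union, whereas the paper only needs the top interval for a single maximal $\gamma$ together with the left endpoints (the case $r=0$) for the remaining cores.
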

\begin{proof}
We prove the assertion by induction on $t$, the $p$-adic length of $n$. If $t=1$ then the statement follows from Theorem~\ref{thm:T-apk}.

Assume that $t\ge 2$. Let $m=\sum_{j=1}^{t-1} a_jp^{n_j}$ and let $\gamma$ be a $p'$-partition of $m$ such that $|\gamma_{p'}^-|=br(m)$. For convenience, let $L=br(m)$ and $k=n_t$. As in Definition~\ref{def:Red} let $A:=A_\gamma$ be the $p^k$-abacus configuration for $\gamma$ having first gap in position $(0,0)$. Moreover, let $\mathcal{R}_A=\{j_1,\dotsc,j_L\}$. 

Applying Lemma~\ref{lem:T2} to the $L$ pairs of runners $(A_{j_i-1},A_{j_i})$ of $A$, we see that for each $r\in\{0,1,\dotsc,\Phi(a_t,L)\}$, there exists a sequence of $a_t$ down-moves that can be performed on $A$ to produce a $p^k$-abacus $B^r$ such that $$\sum_{j\in\mathcal{R}_A} \operatorname{Rem}(B^r_j) = L+r.$$ 
Let $\lambda(r)$ be the partition of $n$ corresponding to $B^r$. Clearly $C_{p^k}(\lambda(r))=\gamma$ and by Theorem~\ref{thm:oddcriterion} we deduce that $\lambda(r)\vdash_{p'}n$. Moreover, $|\lambda(r)_{p'}^-|=L+r$ by Corollary \ref{cor:removable beads}.  Hence $L+r\in \mathcal{E}_n$, and thus $\{L,L+1,\dotsc,br(n)\}\subseteq \mathcal{E}_n$, noting that $L+\Phi(a_t,L)=br(n,\gamma)=br(n)$ by Proposition~\ref{prop:br_m}.

If $L=1$ then the proof is complete; otherwise, using the inductive hypothesis we have that for any $i\in\{1,2,\dotsc,L-1\}$, there exists $\gamma(i)\vdash_{p'}m$ such that $|\gamma(i)^-_{p'}|=i$. Taking $r=0$ and replacing $\gamma$ by $\gamma(i)$ in the above construction, we construct $\beta(i)\vdash_{p'}n$ such that $C_{p^k}(\beta(i))=\gamma(i)$ and $|\beta(i)^-_{p'}|=i+0$. Hence $\{1,2,\dotsc,L-1\}\subseteq \mathcal{E}_n$, and we conclude that $\mathcal{E}_n=\{1,2,\dotsc,br(n)\}$.
\end{proof}

\begin{proof}[Proof of Theorem A]
This follows directly from Corollary~\ref{cor:itera} and Theorem~\ref{thm:consec}.
\end{proof}

\section{The upper bound $\mathcal{B}_n$}\label{sec:D}
In this section we prove Theorem D. Let $n\in\mathbb{N}$ and let $n=\sum_{j=1}^t a_jp^{n_j}$ be the $p$-adic expansion of $n$, for some $0\le n_1<\cdots<n_t$. 
Recall that $$\mathcal{B}_n= br(a_1p^{n_1}) + \sum_{j=2}^t \left\lfloor \frac{a_j}{2} \right\rfloor.$$
From Lemma \ref{lem:PhiBound} and Corollary~\ref{cor:itera}, we see that $br(n)\leq\mathcal{B}_n$, and the difference $\varepsilon_n=\mathcal{B}_n-br(n)$ can be written as $$\varepsilon_n = \sum_{j=2}^t\left( \lfloor a_j/2\rfloor - \Phi(a_j,br(m_j)) \right)$$ where $m_j = \sum_{i=1}^{j-1} a_ip^{n_i}$. 
The following statement will be useful in the proof of Theorem D, below.
\begin{lem}\label{lem:br_incr}
Let $s,t\in\mathbb{N}_0$ with $s\le t$. Let $b_0,b_1,\dotsc,b_t\in\{0,1,\dotsc,p-1\}$ with $b_0,b_1,\dotsc,b_s$ not all zero. Then $br\left(\sum_{j=0}^s b_jp^j\right) \le br\left(\sum_{j=0}^t b_jp^j\right)$.
\end{lem}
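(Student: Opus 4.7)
The plan is to reduce directly to Corollary~\ref{cor:itera}, exploiting the fact that the $p$-adic expansion of $M:=\sum_{j=0}^s b_jp^j$ is exactly an initial segment of the $p$-adic expansion of $N:=\sum_{j=0}^t b_jp^j$, once we discard the zero digits.

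More precisely, I would first list the nonzero digits among $b_0,b_1,\dots,b_s$ as $a_1p^{n_1}+\cdots+a_rp^{n_r}$ with $0\le n_1<\cdots<n_r\le s$ and $a_i=b_{n_i}$, and the nonzero digits among $b_{s+1},\dots,b_t$ as $a_{r+1}p^{n_{r+1}}+\cdots+a_up^{n_u}$ with $s<n_{r+1}<\cdots<n_u\le t$. The hypothesis that $b_0,\dots,b_s$ are not all zero guarantees $r\ge 1$; since $u\ge r$, the $p$-adic expansion of $N$ is $\sum_{j=1}^u a_jp^{n_j}$, while that of $M$ is $\sum_{j=1}^r a_jp^{n_j}$.

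Next, I would apply Corollary~\ref{cor:itera} to both sides. Writing $m_j:=\sum_{i=1}^{j-1} a_ip^{n_i}$ (the same formula works for the expansions of $M$ and of $N$, since the first $r$ pairs $(a_j,n_j)$ coincide), we obtain
\begin{align*}
br(M)&=br(a_1p^{n_1})+\sum_{j=2}^{r}\Phi(a_j,br(m_j)),\\
br(N)&=br(a_1p^{n_1})+\sum_{j=2}^{u}\Phi(a_j,br(m_j)).
\end{align*}
Subtracting gives $br(N)-br(M)=\sum_{j=r+1}^{u}\Phi(a_j,br(m_j))$, where an empty sum (in the case $u=r$, i.e.\ $M=N$) is interpreted as $0$.

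Finally, I would observe that $\Phi(a,L)\ge 0$ for every $a\in\mathbb{N}_0$ and $L\in\mathbb{N}$: indeed, the choice $a_1=\cdots=a_L=0$ is always admissible in Definition~\ref{def:PhiPsi} and contributes $L\cdot f(0)=0$. Each summand above is therefore non-negative, which yields $br(M)\le br(N)$, as required. There is no substantial obstacle here; the only point demanding care is the bookkeeping of the zero digits, ensuring that the first $r$ terms of the recursion for $N$ are literally the recursion for $M$.
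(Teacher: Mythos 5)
Your proof is correct and takes essentially the same route as the paper: the paper's one-line proof cites Proposition~\ref{prop:br_m}, whose iteration is exactly the Corollary~\ref{cor:itera} you invoke, and in both cases the substance is that the additional digits contribute the non-negative terms $\Phi(a_j,br(m_j))$. Your explicit bookkeeping of the zero digits and the observation that $\Phi(a,L)\ge 0$ (via $a_1=\cdots=a_L=0$) are exactly the details the paper leaves implicit.
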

\begin{proof}
This follows directly from Proposition~\ref{prop:br_m}. 
\end{proof}

\medskip

\begin{proof}[Proof of Theorem D]
Fix $n\in\mathbb{N}$ and its $p$-adic expansion as above. Let $\varepsilon(j)=\lfloor a_j/2\rfloor - \Phi(a_j,br(m_j))$. 
If $a_j\le 3$ then $\varepsilon(j)=0$ by Lemma~\ref{lem:PhiBound}, since $br(m_j)\ge 1$. Hence if $a_j\le 3$ for all $j\ge 2$, then in fact $\varepsilon_n=0$. In particular if $p\le 3$ then $\varepsilon_n=0$, so from now on we may assume $p\ge 5$ and that there exists $i\in\{2,\ldots, t\}$ such that $a_i\ge 4$. 
In particular, there exists a unique $k\in\{1,\ldots, t\}$ and integers $1=i_0<i_1<i_2<\cdots <i_k\leq t$ such that for all $j\in\{1,\ldots, k\}$ $$i_j:=\mathrm{min}\left\{x\in\{i_{j-1}+1,\ldots, t-1,t\}\ |\ a_x\geq 2^j+2\right\},$$
and such that $\{x\in\{i_{k}+1,\ldots, t-1,t\}\ |\ a_x\geq 2^{k+1}+2\}=\emptyset$. Note $k$ must satisfy $2^k<p$, because if $2^k\ge p$ then $a_{i_k}\ge 2^k+2>p-1$, contradicting the fact that $a_{i_k}$ is a $p$-adic digit.

We first show that $br(m_{i_j})\geq 2^{j-1}$ for all $j\in\mathbb{N}$ by induction. This is clear for $j=1$. 
For $j\in\{2,\ldots, t\}$, we have that  
$$br(m_{i_j})\geq br(m_{i_{j-1}+1})= br(m_{i_{j-1}})+\Phi(a_{i_{j-1}}, br(m_{i_{j-1}}))\geq 2^{j-2}+\Phi(2^{j-1}+2, 2^{j-2})\geq 2^{j-1}.$$
The inequalities above hold by Lemma~\ref{lem:br_incr}, the fact that $\Phi$ is non-decreasing in each argument, the inductive hypothesis and Lemma~\ref{lem:2powers}, while the equality follows from Proposition~\ref{prop:br_m}. 
Thus for all $x\geq i_{j}+1$ we have that $$br(m_{x})\geq br(m_{i_{j}+1})= br(m_{i_{j}})+\Phi(a_{i_{j}}, br(m_{i_{j}}))\geq 2^{j-1}+\Phi(2^{j}+2, 2^{j-1})\geq 2^{j}.$$

Now let $x\in\{2,\ldots, t\}$ be such that $i_j<x<i_{j+1}$ for some $j\in\{1,\ldots, k\}$. Since $x<i_{j+1}$, we have $a_x\le 2^{j+1}+1$, and since $x>i_j$, we have by the above discussion that $br(m_x)\ge 2^j$. Therefore $br(m_x)\geq \lfloor a_x/2\rfloor$ and hence $\varepsilon(x)=0$, by Lemma~\ref{lem:PhiBound}. Similarly if $x<i_1$ then $a_x\le 3$ and so $\varepsilon(x)=0$, while if $x>i_k$ then $br(m_x)\ge 2^k\ge\lfloor a_x/2\rfloor$ and thus $\varepsilon(x)=0$ also.
Hence $$\varepsilon_{n}=\sum_{j=1}^k\varepsilon(i_j).$$

Finally, for each $j\in\{1,\ldots, k\}$, we have by Lemma~\ref{lem:2powers} that
$$\varepsilon(i_j)=\lfloor a_{i_j}/2\rfloor - \Phi(a_{i_j},br(m_{i_j}))\leq \tfrac{p-1}{2}- \Phi(2^j+2, 2^{j-1})\leq \tfrac{p-1}{2}-2^{j-1}.$$
Hence
$$\varepsilon_n=\sum_{j=1}^k\varepsilon(i_j)\leq \sum_{i=0}^{k-1}(\tfrac{p-1}{2}-2^i)=
k\cdot\tfrac{p-1}{2}-(2^{k}-1)< k\cdot\tfrac{p}{2}< \tfrac{p}{2}\log_2 p.$$
\end{proof}

\begin{rem}\label{rem:appl}
Theorem D shows that the difference between the upper bound $B_n$ and the actual value of $br(n)$ is relatively small, and can be bounded independently of $n$. If $p\in\{2,3\}$ then $\varepsilon_n=0$, as observed in the first part of the proof of Theorem D above. 
In particular, fixing $p=2$ we recover \cite[Theorem 1]{APS}.
As already mentioned in the introduction, the proof of Theorem D also shows for any prime $p$, we have $\mathcal{B}_n=br(n)$ whenever all of the $p$-adic digits of $n$ are at most $3$.
\end{rem}

\section{The value of $br(ap^k)$ and the set $\mathcal{E}_{ap^k}$}\label{sec:apk}
The main goals in this section are to prove Theorem B (i.e.~determining the value of $br(ap^k)$) and to prove Theorem~\ref{thm:T-apk} (i.e.~showing that $\mathcal{E}_{ap^k}=\{1,2,\ldots, br(ap^k)\}$). As already remarked in the introduction, these two results play the role of base cases for Theorem A. 

\medskip

From now on, let $p$ be an odd prime. The case when $k=0$ is straightforward and is described in the following proposition.

\begin{prop}\label{prop:k=0}
Let $a\in\{1,2,\ldots, p-1\}$. Then $br(a)=f(2a)$ and $\mathcal{E}_a=\{1,2,\ldots,br(a)\}$.
\end{prop}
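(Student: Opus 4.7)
The plan is to reduce to a purely combinatorial statement about partitions of $a$, exploiting the fact that $a<p$ forces the entire representation theory to be of $p'$-degree.

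First, since $p\nmid a!$, every $\lambda\vdash a$ is automatically a $p'$-partition, and likewise every $\mu\vdash a-1$ is a $p'$-partition. In particular $\lambda^-_{p'}=\lambda^-$ for every $\lambda\vdash a$. By the branching rule, $|\lambda^-|$ is just the number of removable nodes of $\lambda$, which equals the number of distinct parts of $\lambda$. So the problem collapses to
$$\mathcal{E}_a = \bigl\{\,\#\{\text{distinct parts of }\lambda\}\ :\ \lambda\vdash a\,\bigr\}.$$

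For the upper bound: a partition with $r$ distinct parts has size at least $1+2+\cdots+r=r(r+1)/2$, so from $|\lambda|=a$ we obtain $r(r+1)\le 2a$, which by definition of $f$ gives $r\le f(2a)$. Hence $br(a)\le f(2a)$.

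For the reverse inclusion (and the full description of $\mathcal{E}_a$), I would exhibit, for each $r\in\{1,\ldots,f(2a)\}$, the explicit partition
$$\lambda(r):=\bigl(a-\tfrac{r(r-1)}{2},\,r-1,\,r-2,\,\ldots,\,2,\,1\bigr).$$
Its parts sum to $a$, and its first part strictly exceeds $r-1$ precisely when $2a>r^2+r-2$, which follows from $r(r+1)\le 2a$. Hence $\lambda(r)$ is a genuine partition of $a$ with exactly $r$ distinct parts, realizing the value $r$ in $\mathcal{E}_a$. This simultaneously yields $br(a)=f(2a)$ and $\mathcal{E}_a=\{1,2,\ldots,f(2a)\}$. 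No step here presents a real obstacle: once one notices that for $n<p$ the $p$-local subtleties evaporate, the statement is just the elementary observation that the maximum number of corners of a Young diagram of size $a$ is controlled by the staircase partition.
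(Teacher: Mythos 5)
Your proposal is correct and follows essentially the same route as the paper: observe that for $a<p$ every partition of $a$ and of $a-1$ is automatically a $p'$-partition, so the problem reduces to counting distinct parts, with $f(2a)$ being the maximal number of distinct parts of a partition of $a$. Your explicit staircase-type partitions $\lambda(r)$ simply make concrete the construction the paper leaves implicit.
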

\begin{proof}
Every partition of $a-1$ is a $p'$-partition, and we can always construct a partition $\lambda$ of $a$ such that $|\lambda^-|=m$ for any $1\le m\le f(2a)$, since $f(2a)$ is the maximum number of parts of distinct size achieved by a partition of $a$.
\end{proof}

In the following proposition we provide a naive upper bound for $br(ap^k)$, for all $k\in\mathbb{N}$ and $a\in\{1,\ldots, p-1\}$. As we will show in the rest of this section, this bound turns out to be tight for almost all values of $a$ and $k$. 

\begin{prop}\label{prop:brapk}
Let $a\in\{1,2,\ldots, p-1\}$ and let $k\in\mathbb{N}$. Then $br(ap^k)\leq 2a$.
\end{prop}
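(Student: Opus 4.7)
The overall strategy is to prove the stronger inequality $|\lambda^{-}|\le 2a$ for every $\lambda\vdash ap^{k}$ with $C_{p^{k}}(\lambda)=\emptyset$ and $w_{p^{k}}(\lambda)=a$. Since every $\lambda\vdash_{p'}ap^{k}$ satisfies both conditions by Theorem~\ref{thm:oddcriterion}, and $|\lambda^{-}_{p'}|\le|\lambda^{-}|$ trivially, this immediately gives $br(ap^{k})\le 2a$. The point is that we may ignore the $p'$-condition on the downward neighbours entirely and simply count all removable nodes of $\lambda$.

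To prove the stronger inequality, I pick a $p^{k}$-abacus $B$ for $\lambda$ with $B^{\uparrow}=A$, where $A$ is the $p^{k}$-abacus of $\emptyset$; by inspection $A$ has no removable beads (since $\emptyset$ has no removable nodes), and by Proposition~\ref{prop:hooks} the abacus $B$ is produced from $A$ by a sequence of exactly $a$ down-moves. Using $|\lambda^{-}|=\sum_{j}\mathrm{Rem}(B_{j})$, it suffices to prove the per-move bound that each individual down-move raises $\sum_{j}\mathrm{Rem}(B_{j})$ by at most $2$; iterating over the $a$ moves then yields $|\lambda^{-}|\le 2a$.

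For the per-move bound, consider a down-move sliding a bead from $(r,j)$ to $(r+1,j)$. Only the occupancy of the two positions $(r,j)$ and $(r+1,j)$ changes, so the removability status of a bead at some $(i,j')$ can only change when either $(i,j')\in\{(r,j),(r+1,j)\}$, or when the position governing its removability --- namely $(i,j'-1)$ if $j'\ne 0$, else $(i-1,p^{k}-1)$ --- lies in this pair. A short enumeration gives exactly four affected bead positions: $(r,j),(r+1,j),(r,j+1),(r+1,j+1)$ in the generic case $j\ne p^{k}-1$, and $(r,j),(r+1,j),(r+1,0),(r+2,0)$ in the wrap-around case $j=p^{k}-1$. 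The contributions are respectively $\le 0,\le +1,\le +1,\le 0$:
\begin{itemize}
\item the bead at $(r,j)$ disappears and can only lose its own removability;
\item the new bead at $(r+1,j)$ may be removable depending on the (unchanged) occupancy of its governing slot;
\item the downstream bead at $(r,j+1)$ (resp.\ $(r+1,0)$) may become removable because the bead blocking it has just moved away;
\item the downstream bead at $(r+1,j+1)$ (resp.\ $(r+2,0)$) may lose removability because its governing slot has just been filled.
\end{itemize}
Summing yields the required change $\le +2$.

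The only delicate step is the uniform treatment of the wrap-around case $j=p^{k}-1$, in which the ``downstream runner'' becomes runner $0$ one row below; once the four affected positions are correctly identified, the $+1/-1$ book-keeping is identical in both cases and no further combinatorial input is required.
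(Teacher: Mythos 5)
Your proposal is correct and follows essentially the same route as the paper: reduce to bounding all removable nodes via $|\lambda^-|\le 2w_{p^k}(\lambda)$ for partitions with empty $p^k$-core, by observing that each down-move on the abacus increases the total number of removable beads by at most two. The only difference is that you spell out in detail the four affected positions behind the per-move bound, which the paper dismisses as ``easy to see.''
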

\begin{proof}
Let $C$ and $D$ be $p^k$-abacus configurations such that $D$ is obtained from $C$ by performing a single down-move. It is easy to see that the number of removable beads in $D$ is at most the number of removable beads in $C$ plus two. Hence  if $\lambda$ is a partition such that $C_{p^k}(\lambda)=\emptyset$ then $|\lambda^-|\leq 2w_{p^k}(\lambda)$. 
Now let $n=ap^k$ and let $\lambda\vdash_{p'}n$ be such that $|\lambda_{p'}^-|=br(n)$. From Theorem~\ref{thm:oddcriterion} we know that $C_{p^k}(\lambda)=\emptyset$ and $w_{p^k}(\lambda)=a$. The result follows. 
\end{proof}


To complete the proof of Theorem B, it will be convenient to split the remainder of this section into two parts. 
In each part we will appropriately fix the natural numbers $a$ and $k$ according to the statement of Theorem B. 

\subsection{Part I}
In this first part, we consider the case $k=1$ and $a<\tfrac{p}{2}$, and the case $k\ge 2$.

\begin{prop}\label{prop:brapk, 2a case}
Let $a\in\{1,2,\ldots, p-1\}$ and let $k\in\mathbb{N}$. If $k=1$ and $a<\tfrac{p}{2}$ or if $k\geq 2$, then $br(ap^k)=2a$.
\end{prop}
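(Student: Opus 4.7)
The upper bound $br(ap^k)\le 2a$ is already provided by Proposition~\ref{prop:brapk}, so the task reduces to exhibiting a single $p'$-partition $\lambda\vdash ap^k$ with $|\lambda^-_{p'}|\ge 2a$. The plan is to work on the $p^k$-abacus with $b=p^k$ beads, for which the abacus $A$ of $\emptyset$ carries exactly one bead (at row $0$) on every runner. Given a set $R=\{j_1<j_2<\cdots<j_a\}\subseteq\{1,\ldots,p^k-2\}$ whose elements are pairwise non-adjacent, let $B$ be obtained from $A$ by sliding the bead on each runner $j_i$ from row $0$ down to row $1$. Then $B^\uparrow=A$ and $w_{p^k}(B)=a$, so the represented partition $\lambda$ is $p'$ by Theorem~\ref{thm:oddcriterion}, and the non-adjacency of $R$ forces $B$ to have exactly $2a$ removable beads: the ``entries'' $(1,j_i)$ and the ``exits'' $(0,j_i+1)$ for $i=1,\ldots,a$.

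The next step is to analyse each $\mu:=B^{\leftarrow c}$. A direct abacus computation shows that $(B^{\leftarrow c})^\uparrow$ differs from $A$ only by having one runner with $2$ beads (at rows $0,1$) and the runner immediately to its right empty, so $C_{p^k}(\mu)$ is a hook partition $(m,1^{p^k-1-m})\vdash p^k-1$; across the $2a$ choices of $c$, the arm $m$ runs through the $2a$ distinct values of $\{j_i,j_i+1:1\le i\le a\}$ (distinctness being guaranteed by non-adjacency). Lemma~\ref{lem:weight} then yields $w_{p^k}(\mu)=a-1$, so for $a\ge 2$ Theorem~\ref{thm:oddcriterion} reduces the $p'$-condition on $\mu$ to the same condition on the hook $C_{p^k}(\mu)$; when $a=1$, $\mu$ coincides with this hook directly. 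The hook has degree $\binom{p^k-2}{m-1}$, and because the base-$p$ digits of $p^k-2$ are $p-2$ in the units place and $p-1$ elsewhere, Lucas' theorem shows the hook is $p'$ precisely when $m\not\equiv 0\pmod p$.

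It therefore suffices to select $R$ satisfying (i) no two consecutive elements, and (ii) $j_i\not\equiv 0,p-1\pmod p$ for every $i$ (so that both $j_i$ and $j_i+1$ avoid multiples of $p$). For the first case of the proposition ($k=1,\ a<p/2$), the set $R=\{1,3,\ldots,2a-1\}\subseteq\{1,\ldots,p-2\}$ meets both conditions at once. For $k\ge 2$, the interval $\{1,\ldots,p^k-2\}$ accommodates the two ``good'' blocks $\{1,\ldots,p-2\}$ and $\{p+1,\ldots,2p-2\}$ separated by the forbidden pair $\{p-1,p\}$; each block admits $(p-1)/2$ pairwise non-adjacent values, and the two together support any $a\le p-1$ --- for instance, taking $j_i=2i-1$ for $1\le i\le(p-1)/2$ and $j_i=2i$ for $(p-1)/2<i\le a$. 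The main obstacle will be precisely this sub-case $k\ge 2$ with $a\ge p/2$, where a naive staircase $j_i=2i-1$ forces some $j_i\in\{p-1,p\}$ and breaks condition (ii); once $R$ is deliberately shifted past the forbidden pair into the next good block, the $2a$ arising hook cores are all $p'$ by Lucas, and the matching lower bound $|\lambda^-_{p'}|\ge 2a$ follows.
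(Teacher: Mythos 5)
Your argument is correct, but for $k\ge 2$ it takes a genuinely different route from the paper's. In the case $k=1$, $a<\tfrac{p}{2}$ the two proofs are close in spirit: the paper's witness is $(p-1,\dots,p-a,a,\dots,1)$ and yours is $(2a,\dots,a+1,a^{p-2a},a-1,\dots,1)$, but both are ``alternating bead'' configurations for which every removal drops the $p$-weight to $a-1$, and for $k=1$ the $p'$-condition on the resulting core is automatic since every partition of $p-1$ is a $p'$-partition (so your Lucas step is vacuous there). For $k\ge 2$ the paper leaves the $p^k$-abacus entirely: it builds a partition whose $p$-quotient is $\bigl((p^{k-1})^a,\emptyset,\dots,\emptyset\bigr)$ via an explicit $p$-abacus, and then certifies each of the $2a$ restrictions separately by computing its $p$-core and full $p$-quotient and invoking the core-tower criterion (Theorem~\ref{thm:oddcrit2}). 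You instead stay on the $p^k$-abacus throughout, observe that every restriction has $p^k$-core a hook $(m,1^{p^k-1-m})$ of $p^k-1$, and decide $p'$-ness in one stroke from the hook degree $\binom{p^k-2}{m-1}$ and Lucas' theorem: the sole obstruction is $m\equiv 0\pmod p$, which your residue condition on $R$ is designed to avoid, and the two ``good blocks'' $\{1,\dots,p-2\}$ and $\{p+1,\dots,2p-2\}$ indeed supply $p-1$ pairwise non-adjacent admissible positions, enough for every $a$. This buys uniformity in $k$ and replaces the paper's quotient computations with a single binomial congruence, at the cost of importing two standard facts the paper never needs (the hook degree formula and Lucas). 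Two trivial slips worth fixing: the computation $w_{p^k}(\mu)=a-1$ follows from Lemma~\ref{lem:weightgeneral}, not Lemma~\ref{lem:weight} (which characterises when the weight is \emph{preserved}); and you should note explicitly that non-adjacency also guarantees that the position $(1,j_i-1)$ is empty, so that the bead at $(1,j_i)$ really is removable.
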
 
\begin{proof}
From Proposition~\ref{prop:brapk} we have that $br(ap^k)\leq 2a$. Hence it is enough to construct $\lambda\vdash_{p'}ap^k$ such that $|\lambda_{p'}^-|=2a$. This is done as follows.

\noindent\textbf{(i)} First suppose that $k=1$ and $a<\tfrac{p}{2}$. Let $\lambda$ be the partition of $ap$ defined by $$\lambda=(p-1, p-2, \ldots, p-a, a, a-1, \ldots, 2, 1).$$ 
The following diagram is the $p$-abacus configuration for $\lambda$ having first gap in position $(0,0)$, where we have indicated the row numbers on the left and the runner numbers above each column:
{\footnotesize
\[ \begin{array}{lccccccccccc}
&& 0 & 1 & 2 & 3 & \cdots & 2a-2 & 2a-1 & 2a & \cdots & p-1\\
-1 && \times & \times & \times & \times & \cdots & \times & \times & \times & \cdots & \times\\
0 && \circ & \times & \circ & \times & \cdots & \circ & \times & \circ & \cdots & \circ\\
1 && \times & \circ & \times & \circ & \cdots & \times & \circ & \circ & \cdots & \circ
\end{array} \]
}

Since $C_p(\lambda)=\emptyset$ we have that $\lambda\vdash_{p'} ap$, by Theorem~\ref{thm:oddcriterion}. Moreover, we observe that $\lambda^-_{p'}=\lambda^-$ by Lemma~\ref{lem:weightgeneral}, and so $|\lambda^-_{p'}|=2a$.

\noindent\textbf{(ii)} Suppose now that $k\geq 2$.  Let $r=p^{k-1}-a>0$ and  
let $\lambda^j=a+p-2+rp+(a-j)(p-1)=p^k-(j-1)(p-1)-1$, for each $j\in\{1,2,\ldots, a\}$. Let $\lambda$ be the partition of $ap^k$ defined by 
$$\lambda=\big(\lambda^1,\lambda^2,\ldots, \lambda^a, a, (a-1)^{p-1}, (a-2)^{p-1},\ldots, 2^{p-1},1^{p-1}\big).$$
The best way to verify that $\lambda$ has the required properties is to look at it on James' abacus. We describe and depict below a $p$-abacus configuration $A$ corresponding to $\lambda$: 

\smallskip

\textbf{-} The first gap is in position $(1,0)$;

\textbf{-} Rows $1\le i\le a-1$ have a gap only in position $(i,0)$;

\textbf{-} Row $a$ has a bead only in position $(a,1)$;

\textbf{-} Rows $a+1$ to $a+r$ are all empty;

\textbf{-} Rows $a+1+r\le i\le 2a+r$ have a bead only in position $(i,0)$;

\textbf{-} There is a gap in position $(x,y)$ for all $x>2a+r$.

{\footnotesize
\[ \begin{array}{lcccccc}
&& 0 & 1 & 2 & \cdots & p-1\\
1 && \circ & \times & \times & \cdots & \times\\
\vdots &&\vdots & & & &\vdots\\
a-1 &&\circ & \times & \times & \cdots & \times\\
a &&\circ & \times & \circ & \cdots & \circ\\
a+1 &&\circ & \circ & \circ & \cdots & \circ\\
\vdots &&\vdots & & & &\vdots\\
a+r &&\circ & \circ & \circ & \cdots & \circ\\
a+1+r &&\times & \circ & \circ & \cdots & \circ\\
\vdots &&\vdots & & & &\vdots\\
2a+r &&\times & \circ & \circ & \cdots & \circ\\
\end{array} \]
}

\medskip

From the structure of $A$ we observe that 
$Q_p(\lambda)=(\lambda_0,\emptyset,\ldots, \emptyset)$, where $$\lambda_0=(\underbrace{p^{k-1},\ldots, p^{k-1}}_{a\ \text{times}}).$$
From the discussion in Section~\ref{subs:comb} (or \cite[Theorem 3.3]{OlssonBook}), we deduce that $w_{p^k}(\lambda)=w_{p^{k-1}}(\lambda_0)=a$ and $C_{p^k}(\lambda)=\emptyset$.
This shows that $\lambda\vdash_{p'}ap^k$, by Theorem~\ref{thm:oddcriterion}. 

Notice that $\lambda$ has exactly $2a$ removable nodes, corresponding to the $2a$ removable beads in $A$ lying in positions $(i,1)$ and $(a+r+i,0)$ for $i\in\{1,\dotsc,a\}$.
Let $c$ be a removable bead in position $(i,1)$ of $A$, for some $i\in\{1,\ldots, a\}$. Then $A^{\leftarrow c}$ corresponds to the partition $\mu\vdash ap^k-1$ such that $C_{p}(\mu)=(p-1)\vdash p-1$ and 
$Q_p(\mu)=(\mu_0,\mu_1,\emptyset,\ldots, \emptyset)$, where 
$$\mu_0=(\underbrace{p^{k-1}-1,\ldots, p^{k-1}-1}_{a\ \text{times}}, i-1)\ \ \text{and}\ \ \mu_1=(1^{a-i}).$$
We observe that $\mu_0\vdash_{p'}(a-1)p^{k-1}+m$, where $m:=p^{k-1}-a+(i-1)$. 
This follows from Theorem~\ref{thm:oddcriterion}, since $w_{p^{k-1}}(\mu_0)=a-1$ and $C_{p^{k-1}}(\mu_0)=(m)\vdash_{p'}m$.
Moreover, we clearly have that $\mu_1\vdash_{p'}a-i$. 
We can now use Theorem~\ref{thm:oddcrit2} to deduce that $\mu\vdash_{p'}ap^k-1$ and therefore $\mu\in\lambda_{p'}^-$. 

A similar argument shows that for every $j\in\{1,\ldots, a\}$ the $p$-abacus configuration $A^{\leftarrow d}$ obtained from $A$ by sliding the bead $d$ in position $(a+r+j,0)$ to position $(a+r+j-1,p-1)$, corresponds to a $p'$-partition $\mu$ of $ap^k-1$, that is, $\mu\in\lambda_{p'}^-$. Thus $|\lambda_{p'}^-|=2a$.
\end{proof}

\begin{prop}\label{prop:consec, 2a case}
Let $a\in\{1,2,\ldots, p-1\}$ and let $k\in\mathbb{N}$. If $k=1$ and $a<\tfrac{p}{2}$ or if $k\geq 2$, then $\mathcal{E}_{ap^k}=\{1,2,\ldots br(ap^k)\}.$
\end{prop}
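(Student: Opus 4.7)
The plan is as follows. By Proposition~\ref{prop:brapk, 2a case} we have $br(ap^k)=2a$, so it is enough to show that for every $r\in\{1,2,\ldots,2a\}$ there exists a $p'$-partition $\lambda_r$ of $ap^k$ with $|\lambda_r^-_{p'}|=r$. I will give a single uniform construction that handles both hypotheses (i.e.~$k=1$ with $a<p/2$, and $k\geq 2$) and which extends the $r=2a$ construction of Proposition~\ref{prop:brapk, 2a case}.

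For each such $r$, I describe $\lambda_r$ directly through its $p^k$-abacus $A_r$, which will carry exactly $r$ beads placed on runners $0,1,\ldots,r-1$: the bead on runner~$0$ sits in row $m_0:=a-\lfloor(r-1)/2\rfloor$, and for $1\leq j\leq r-1$ the bead on runner~$j$ sits in row $m_j$, where $m_j=1$ when $j\geq 2$ is even and $m_j=0$ otherwise. A direct count gives $\sum_{j=0}^{r-1}m_j=a$, and $m_0\geq 1$ using $r\leq 2a$.

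The verification splits cleanly into two parts. First, pushing all beads of $A_r$ up yields the abacus with beads at $\{0,1,\ldots,r-1\}$, which represents $\emptyset$; hence $C_{p^k}(\lambda_r)=\emptyset$ and $w_{p^k}(\lambda_r)=\sum m_j=a$, so Theorem~\ref{thm:oddcriterion} gives $\lambda_r\vdash_{p'}ap^k$. Second, for the count $|\lambda_r^-_{p'}|=r$, I would verify that each of the $r$ beads of $A_r$ is removable---the runner-$0$ bead slides into position $(m_0-1,p^k-1)$, which is empty since runner $p^k-1$ carries no bead (as $r\leq 2a<p^k$); for $j\geq 1$, the alternating pattern of the $m_j$ together with $m_0\geq 1$ forces $m_{j-1}\neq m_j$, so position $(m_j,j-1)$ is empty---and that each such removal yields a $p'$-child via Lemma~\ref{lem:weight}, using that the runner counts in $A_r$ satisfy $|A_j|=|A_{j-1}|$ for $1\leq j\leq r-1$ and $|A_0|=|A_{p^k-1}|+1$.

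The main obstacle is the combined bookkeeping in the second step: one must check removability and the $p'$-criterion of Lemma~\ref{lem:weight} simultaneously for each of the $r$ beads, uniformly in $r$, without any case split. The cases $r=1$ and $r=2$ recover the familiar hook partitions $(ap^k)$ and $(ap^k-1,1)$ respectively, which provides a useful sanity check.
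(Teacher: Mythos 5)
Your construction breaks down for $k\ge 2$, and the failure point is exactly the step you flag as ``bookkeeping''. With the runner counts you state ($|A_j|=|A_{j-1}|$ for $1\le j\le r-1$ and $|A_0|=|A_{p^k-1}|+1$), Lemma~\ref{lem:weight} (equivalently Lemma~\ref{lem:weightgeneral}) does not show that each removal yields a $p'$-child --- it shows that every removal \emph{drops} the $p^k$-weight from $a$ to $a-1$. So each child $\mu\vdash ap^k-1$ has $p^k$-core of size $p^k-1$, and by Theorem~\ref{thm:oddcriterion} applied at $n-1=(a-1)p^k+(p^k-1)$ you still need $C_{p^k}(\mu)$ to be a $p'$-partition of $p^k-1$. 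For $k=1$ this is automatic (every partition of $p-1$ is a $p'$-partition), which is why your construction does work there, modulo citing the right reason. For $k\ge 2$ it is a genuine constraint and your flat configuration violates it: take $p=3$, $k=2$, $a=2$, $r=4$. Your abacus has beads at positions $9,1,11,3$, i.e.\ $\lambda_4=(8,7,2,1)\vdash_{3'}18$, with the four children $(8,6,2,1)$, $(8,7,2)$, $(7,7,2,1)$, $(8,7,1,1)$. The child $(8,7,2)$ (from the runner-$1$ bead) has $9$-core $(6,1,1)$, which has $3$-weight $1\ne 2$ and hence is not a $3'$-partition of $8$; indeed $3\mid\chi^{(8,7,2)}(1)$. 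So $|\lambda_4{}^-_{p'}|=3\ne 4$. (Similarly $r=3$ gives $(9,8,1)$ with only $2$ good children.)

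This is why the paper's construction for $k\ge 2$ looks so different from the $k=1$ case: the abaci $A^j$ and $B^j$ in the paper's proof contain a block of $p^{k-1}-a$ consecutive empty rows, placed precisely so that the $p$-quotients of all the children can be computed and certified as $p'$-partitions via Theorem~\ref{thm:oddcrit2}; the values $2a-j$ and $a-j$ are then realised by deliberately making $j$ of the removals produce a bad core $(p,1^{p-1})$. A single uniform configuration of the kind you propose cannot work for $k\ge2$ without some mechanism controlling the cores of the children, so the gap is not a routine verification but a missing idea. Your $k=1$ argument can be salvaged by replacing the appeal to Lemma~\ref{lem:weight} with the observation that all children have $p$-core of size $p-1<p$, but you would still need a separate construction for $k\ge 2$.
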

\begin{proof}
From Proposition~\ref{prop:brapk, 2a case}, we have that $br(ap^k)=2a=\max\mathcal{E}_{ap^k}$. Hence it is enough to construct $\lambda\vdash_{p'}ap^k$ such that $|\lambda^-_{p'}|=m$ for each $m\in\{1,2,\dotsc,2a-1\}$. This is done as follows.

\noindent\textbf{(i)} First suppose that $k=1$ and $a<\tfrac{p}{2}$. 
We first exhibit $\lambda(j)\vdash_{p'}ap$ such that $|\lambda(j)^-_{p'}|=2j$, for each $j\in\{1,2,\dotsc,a-1\}$:

\smallskip

\textbf{-} Let $\lambda(1)=(ap-1,1)$;

\textbf{-} For each fixed $j\in\{2,\dotsc,a-1\}$, let $\lambda(j)=(\lambda_1,\lambda_2,\dotsc,\lambda_{2j})$ where 

\textbf{\hspace{5pt}$\circ$} $\lambda_1=(a-j+1)p-2j+1$, 

\textbf{\hspace{5pt}$\circ$} $\lambda_x=p+2-x$ for $x\in\{2,\dotsc,j\}$, and

\textbf{\hspace{5pt}$\circ$} $\lambda_y=2j+1-y$ for $y\in\{j+1,\dotsc,2j\}$.

\smallskip

For convenience, we depict the $p$-abacus of $\lambda(j)$ having first gap in position $(0,0)$:
{\footnotesize
\[ \begin{array}{lcccccccccccc}
&& 0 & 1 & 2 & 3 & \cdots & 2j-2 & 2j-1 & 2j & \cdots & p-1\\
-1 && \times & \times & \times & \times & \cdots & \times & \times & \times & \cdots & \times\\
0 && \circ & \times & \circ & \times & \cdots & \circ & \times & \circ & \cdots & \circ\\
1 && \circ & \circ & \times & \circ & \cdots & \times & \circ & \circ & \cdots & \circ\\
2 && \circ & \circ & \circ & \circ & \cdots & \circ & \circ & \circ & \cdots & \circ\\
\vdots && \vdots &&&&&&&&& \vdots \\
a-j+1 && \times & \circ & \circ & \circ & \cdots & \circ & \circ & \circ & \cdots & \circ\\
\end{array} \]
}

By Theorem~\ref{thm:oddcriterion}, we have $\lambda(j)\vdash_{p'}ap$, and by Lemma~\ref{lem:weightgeneral}, we have $|\lambda(j)^-_{p'}|=|\lambda(j)^-|=2j$. Hence $\{2,4,\dotsc,2a-2\}\subseteq \mathcal{E}_{ap}$.

Next we exhibit $\beta(j)\vdash_{p'}ap$ such that $|\beta(j)^-_{p'}|=2j-1$ for each $j\in\{1,2,\dotsc,a\}$:

\smallskip

\textbf{-} Let $\beta(1)=((a-1)p+1,1^{p-1})$;

\textbf{-} Let $\beta(a)=(2a-1,2a-2,\dotsc,a+1,a^{p-2a+2},a-1,\dotsc,2,1)$;

\textbf{-} For each fixed $j\in\{2,\dotsc,a-1\}$, let $\beta(j)=(\beta_1,\dotsc,\beta_p)$ where

\textbf{\hspace{5pt}$\circ$} $\beta_1=(a-j)p+1$,

\textbf{\hspace{5pt}$\circ$} $\beta_x=2j+2-x$ for $x\in\{2,\dotsc,j\}$, 

\textbf{\hspace{5pt}$\circ$} $\beta_y=j$ for $y\in\{j+1,\dotsc,p-j+1\}$, and 

\textbf{\hspace{5pt}$\circ$} $\beta_z=p+1-z$ for $z\in\{p-j+2,\dotsc,p\}$.

\smallskip

For convenience, we depict the $p$-abacus of $\beta(j)$ having first gap in position $(0,0)$:
{\footnotesize
\[ \begin{array}{lcccccccccccc}
&& 0 & 1 & 2 & 3 & \cdots & 2j-2 & 2j-1 & 2j & \cdots & p-1\\
-1 && \times & \times & \times & \times & \cdots & \times & \times & \times & \cdots & \times\\
0 && \circ & \times & \circ & \times & \cdots & \circ & \times & \times & \cdots & \times\\
1 && \circ & \circ & \times & \circ & \cdots & \times & \circ & \circ & \cdots & \circ\\
2 && \circ & \circ & \circ & \circ & \cdots & \circ & \circ & \circ & \cdots & \circ\\
\vdots && \vdots &&&&&&&&& \vdots \\
a-j+1 && \times & \circ & \circ & \circ & \cdots & \circ & \circ & \circ & \cdots & \circ\\
\end{array} \]
}

Again by Theorem~\ref{thm:oddcriterion} we have $\beta(j)\vdash_{p'}ap$. By Lemma~\ref{lem:weightgeneral}, if $j\ne a$ then $|\beta(j)^-|=2j$ and $|\beta(j)^-\setminus \beta(j)^-_{p'}|=1$, 
while if $j=a$ then $|\beta(j)^-_{p'}|=|\beta(j)^-|=2a-1$. 
In both cases we have $|\beta(j)^-_{p'}|=2j-1$, giving $\{1,3,\dotsc,2a-1\}\subseteq \mathcal{E}_{ap}$. Thus $\mathcal{E}_{ap}=\{1,2,\dotsc,2a\}$ as claimed.

\medskip

\noindent\textbf{(ii)} Suppose now that $k\ge 2$. We first construct a partition $\lambda(j)\vdash_{p'}ap^k$ such that $|\lambda(j)^-_{p'}|=2a-j$, for all $j\in\{1,2,\ldots, a-1\}$. Let $r=p^{k-1}-a>0$ and let 
$$\lambda(j):=\big(\eta_{a-1},\ldots, \eta_j,\theta_j,\ldots, \theta_1,a,(a-1)^{p-1},\ldots, (j+1)^{p-1},j^{p-2},(j-1)^{p-1},\ldots, 1^{p-1}\big),$$
where $\theta_t=a+pr+t(p-1)$ and $\eta_t=\theta_t+(p-2)$, for any $t\in\{1,\ldots, a-1\}$.
As usual, it is useful to look at $\lambda(j)$ on James' abacus. We describe and depict below a $p$-abacus $A^j$ of $\lambda(j)$:

\smallskip

\textbf{-} The first gap is in position $(1,1)$;

\textbf{-} Rows $1\le x\le j$ have a gap only in position $(x,1)$;

\textbf{-} Rows $j+1\le x\le a-1$ have a gap only in position $(x,0)$;

\textbf{-} Row $a$ has a bead only in position $(a,1)$;

\textbf{-} Rows $a+1$ to $a+r$ are all empty;

\textbf{-} Rows $a+r+1\le x\le a+r+j$ have a bead only in position $(x,1)$;

\textbf{-} Rows $a+r+j+1\le x\le 2a+r$ have a bead only in position $(x,0)$;

\textbf{-} There is a gap in position $(x,y)$ for all $x>2a+r$.

{\footnotesize
\[ \begin{array}{lcccccccc}
&&& 0 & 1 & 2 & \cdots & p-1\\
1&&&\times & \circ & \times & \cdots & \times\\
\vdots &&&\vdots & & & & \vdots\\
j&&&\times & \circ & \times & \cdots & \times\\
j+1&&&\circ & \times & \times & \cdots & \times\\
\vdots &&&\vdots & & & & \vdots\\
a-1&&&\circ & \times & \times & \cdots & \times\\
a&&&\circ & \times & \circ & \cdots & \circ\\
a+1&&&\circ & \circ & \circ & \cdots & \circ\\
\vdots &&&\vdots & & & & \vdots\\
a+r&&&\circ & \circ & \circ & \cdots & \circ\\
a+r+1&&&\circ & \times & \circ & \cdots & \circ\\
\vdots &&&\vdots & & & & \vdots\\
a+r+j&&&\circ & \times & \circ & \cdots & \circ\\
a+r+j+1&&&\times & \circ & \circ & \cdots & \circ\\
\vdots &&&\vdots & & & & \vdots\\
2a+r&&&\times & \circ & \circ & \cdots & \circ\\
\end{array} \]
}

Since $j$ is fixed, we will denote $\lambda(j)$ by $\lambda$ and $A^j$ by $A$ from now on.
Arguing as in the proof of Proposition~\ref{prop:brapk, 2a case}, we deduce that $\lambda\vdash_{p'}ap^k$. Moreover, it is clear that $|\lambda^-|=2a$.
Let $x\in\{1,\ldots j\}$ and let $c$ be the bead lying in position $(x,2)$ of $A$. Let $\mu^x$ be the partition of $ap^k-1$ corresponding to the $p$-abacus $A^{\leftarrow c}$. Then $C_{p}(\mu^x)=(p,1^{p-1})$. Therefore $\mu^x$ is not a $p'$-partition, by Theorem~\ref{thm:oddcrit2}. It follows that $|\lambda_{p'}^-|\leq 2a-j$.

We will now show that all of the other $2a-j$ removable beads in $A$ correspond to $p'$-partitions of $ap^k-1$. Let $x\in\{j+1,j+2,\ldots, a\}$ and let $c$ be the bead in position $(x,1)$ of $A$. Let $\mu^x$ be the partition of $ap^k-1$ corresponding to the $p$-abacus $A^{\leftarrow c}$. Then $C_{p}(\mu^x)=(p-1)\vdash_{p'}p-1$ and $Q_{p}(\mu^x)=(\mu_0,\mu_1,\emptyset,\ldots, \emptyset)$, where
$$\mu_0=\big((p^{k-1}-1)^{a-j}, x-j-1\big)\ \ \text{and}\ \ \mu_1=\big((r+j+1)^j, (j+1)^{a-x}, j^{x-j-1}\big).$$
By Theorem~\ref{thm:oddcriterion}, we have that $\mu_0\vdash_{p'}|\mu_0|$ and $\mu_1\vdash_{p'}|\mu_1|$, where 
$$|\mu_0|=(a-j-1)p^{k-1}+(p-1)\sum_{i=1}^{k-2}p^i+[(p-1)-(a-x)]$$ and $$|\mu_1|=jp^{k-1}+(a-x).$$
This implies $\mu^x\vdash_{p'}ap^k-1$, by Theorem~\ref{thm:oddcrit2}.

Now let $c$ be the bead in position $(a+r+x,1)$ for some $x\in\{1,\ldots,j\}$, and let $\mu^x$ be the partition corresponding to the $p$-abacus $A^{\leftarrow c}$. Arguing as before, we deduce from Theorem~\ref{thm:oddcrit2} that $\mu^x\vdash_{p'}ap^k-1$.

Finally, let $c$ be the bead in position $(a+r+x,0)$ for some $x\in\{j+1,\ldots, a\}$ and let $\mu^x$ be the partition corresponding to $A^{\leftarrow c}$. First, we observe that $C_{p}(\mu^x)=(p-2,1)\vdash_{p'}p-1$.
Moreover, $Q_{p}(\mu^x)=(\mu_0,\mu_1,\emptyset,\ldots, \emptyset, \mu_{p-1})$, where
$$\mu_0=\big((p^{k-1}+1)^{a-x}, (p^{k-1})^{x-j-1}\big),\ \ \ \mu_1=\big((r+j)^j, j^{a-j}\big),\quad \mathrm{and}\quad \mu_{p-1}=(r+x-1).$$
Again, we use Theorem~\ref{thm:oddcrit2} to deduce that $\mu^x\vdash_{p'}ap^k-1$. Thus $|\lambda_{p'}^-|=2a-j$, as desired, and so $\{a+1,a+2,\ldots, 2a-1\}\subseteq \mathcal{E}_{ap^k}$. 

\medskip

Finally, we construct a partition $\beta(j)\vdash_{p'}ap^k$ such that $|\beta(j)^-_{p'}|=a-j$, for all $j\in\{0,1,\ldots, a-1\}$. Let $B^j$ be the $p$-abacus configuration obtained from the $p$-abacus $A^j$ described above by removing the bead in position $(a,1)$ so that row $a$ is now empty.
Let $\beta(j)$ be the partition of $ap^k$ corresponding to the $p$-abacus $B^j$. Again, since $j$ is fixed we will now denote $B^j$ by $B$ and $\beta(j)$ by $\beta$.

It is clear that $\beta\vdash_{p'}ap^k$ and that $|\beta^-|=2a-j-1$. Moreover, if $c$ is one of the $a-1$ removable beads lying on runner $1$ of $B$ and $\mu$ is the partition of $ap^k-1$ corresponding to the $p$-abacus $B^{\leftarrow c}$, then $C_{p}(\mu)=(p,1^{p-1})$ and therefore $\mu$ is not a $p'$-partition, by Theorem~\ref{thm:oddcrit2}. Hence $|\beta^-_{p'}|\leq a-j$.
Arguing as before, the partition corresponding to the $p$-abacus $B^{\leftarrow c}$ for any removable bead $c$ lying on runner $0$ of $B$ is a $p'$-partition of $ap^k-1$. Hence $|\beta^-_{p'}|= a-j$, and so $\{1,2,\dotsc,a\}\subseteq \mathcal{E}_{ap^k}$. Thus $\mathcal{E}_{ap^k}=\{1,2,\dotsc,2a\}$ as claimed.
\end{proof}

\subsection{Part II}

In this second part of Section~\ref{sec:apk}, we fix $k=1$ and $a\in\mathbb{N}$ such that $\tfrac{p}{2}<a<p$. The main aim in Part II is to prove the following fact.

\begin{prop}\label{prop:brapk, floor case}
Let $a\in\mathbb{N}$ be such that $\tfrac{p}{2}<a<p$. Then $br(ap)=p-1+2\lfloor\tfrac{2a-(p-1)}{6}\rfloor$.
\end{prop}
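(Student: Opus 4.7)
The proof follows the overall strategy of Proposition~\ref{prop:brapk, 2a case}: establish an upper bound and match it by an explicit abacus construction. For the set-up, fix $\lambda \vdash_{p'} ap$. Theorem~\ref{thm:oddcriterion} forces $C_p(\lambda) = \emptyset$, so if $B$ is the $p$-abacus of $\lambda$ then every runner $B_j$ carries the same number $M$ of beads, and the down-move counts $d_j := w(B_j)$ satisfy $\sum_{j=0}^{p-1} d_j = a$. Since $ap - 1 = (a-1)p + (p-1)$ and every partition of $p - 1$ is automatically a $p'$-partition, combining Theorem~\ref{thm:oddcriterion} with Lemma~\ref{lem:weightgeneral} shows that a removable bead on runner $j$ of $B$ yields a $p'$-child precisely when $j \ge 1$. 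Writing $r_j := \operatorname{Rem}(B_j)$, we obtain $|\lambda^-_{p'}| = R := \sum_{j=1}^{p-1} r_j$. Viewing each pair $(B_{j-1}, B_j)$ as a $2$-abacus with core $T_\emptyset$, Lemma~\ref{lem:T} gives $r_j \le \lfloor \sqrt{d_{j-1} + d_j}\rfloor$, and hence $\sum_{j=1}^{p-1} r_j^2 \le d_0 + 2 \sum_{j=1}^{p-2} d_j + d_{p-1} \le 2a$.

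For the upper bound, write $m := R - (p - 1)$. Since $r_j^2 - r_j \ge 2$ whenever $r_j \ge 2$, and $R \le (p - 1) + \sum_{r_j \ge 2} (r_j - 1)$, standard accounting yields the coarse bound $m \le \lfloor (2a - p + 1)/3\rfloor$. The refinement to denominator $6$ rests on the structural observation that the minimum-cost way to realize $r_j = 2$ concentrates exactly $4$ down-moves on runner $j$, shifting its top two beads downward by $1$ and $2$ rows respectively; this simultaneously vacates positions $(-2, j)$ and $(-1, j)$, so when runner $j + 1$ is at baseline, the untouched beads at $(-2, j + 1)$ and $(-1, j + 1)$ become removable, forcing $r_{j+1} \ge 2$. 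A local-exchange argument then shows that any maximum-$R$ configuration can be taken with the runners of value $2$ appearing in consecutive pairs, so $m$ is even at the maximum, and hence $m \le 2\lfloor (2a - p + 1)/6\rfloor$.

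For the lower bound, set $m^* := \lfloor (2a - p + 1)/6\rfloor$ and construct $\lambda$ via its $p$-abacus. Begin with the alternating baseline $(d_0, d_1, \ldots, d_{p-1}) = (0, 1, 0, 1, \ldots, 0)$, which uses $(p - 1)/2$ down-moves and achieves $r_j = 1$ for every $j \in \{1, \ldots, p - 1\}$. Select $m^*$ distinct odd indices in $\{1, 3, \ldots, p - 2\}$ and, for each chosen $j$, upgrade $d_j = 1$ to $d_j = 4$ by displacing the two topmost beads of runner $j$ down by $1$ and $2$ rows; each such upgrade costs $3$ additional down-moves and promotes both $r_j$ and $r_{j+1}$ from $1$ to $2$. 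The total cost is $(p-1)/2 + 3 m^* \le a$, with any residual budget absorbed into $d_0$ or $d_{p-1}$. Theorem~\ref{thm:oddcriterion} confirms $\lambda \vdash_{p'} ap$ and by construction $|\lambda^-_{p'}| = (p - 1) + 2 m^*$, matching the upper bound.

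The main obstacle is the sharpening of the denominator from $3$ to $6$ in the upper bound: direct summation of the Lemma~\ref{lem:T} inequalities delivers only the coarser version, so the argument must exploit the abacus-level pairing phenomenon whereby an optimally placed $r_j = 2$ propagates to $r_{j+1} \ge 2$ on the next runner in any maximum configuration; verifying that this local exchange can always be performed is the delicate combinatorial step.
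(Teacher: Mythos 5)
Your overall strategy matches the paper's: an explicit abacus construction for the lower bound, the inequality $\operatorname{Rem}(B_j)\le\lfloor\sqrt{w(B_{j-1})+w(B_j)}\rfloor$ from Lemma~\ref{lem:T} for the coarse upper bound $br(ap)\le p-1+\lfloor 2x/3\rfloor$ (with $x=a-\tfrac{p-1}{2}$), and a parity argument to shave off $1$ when $x\equiv 2\pmod 3$. The first two steps are essentially correct (your ``standard accounting'' amounts to $\sum_j(3(r_j-1)+1)\le\sum_j r_j^2\le 2a$, which is valid since $(r-1)(r-2)\ge 0$ for all integers $r\ge 0$; and the lower-bound construction works after minor repairs to where the residual $\delta$ down-moves are absorbed, since a single down-move on runner $0$ from baseline necessarily places a bead at position $(0,0)$ and can kill a removable bead on runner $1$).

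The genuine gap is the case $x\equiv 2\pmod 3$, which is where all the difficulty lives and which you yourself flag as ``the delicate combinatorial step'' without carrying it out. The structural claim your parity argument rests on is false as stated: it is not true that realizing $r_j=2$ forces the four down-moves to be concentrated on runner $j$ and hence forces $r_{j+1}\ge 2$. For example, taking $d_{j-1}=2$ (bead from row $-1$ to row $1$) and $d_j=2$ (beads from rows $-1,-2$ to rows $0,-1$) also achieves $r_j=2$ at total pair-weight $4$, yet leaves a bead at $(-1,j)$ so that runner $j+1$ at baseline has only one removable bead. Thus the propagation $r_j=2\Rightarrow r_{j+1}\ge 2$ holds only for one particular normal form, and the existence of an $R$-preserving local exchange bringing an arbitrary optimal configuration into that form is precisely what must be proved; you also never rule out $r_j=0$ or $r_j\ge 3$ at the optimum, which the pairing argument tacitly assumes. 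The paper's route (Lemmas~\ref{lem:M} and~\ref{lem:delta=2}) handles this by relaxing to ``doubled $p$-abaci'' where the pair-weights $b_i$ can be exchanged freely, first pinning the optimal weight vector down to three explicit distributions with exactly $t=2q+1$ entries in $\{4,\dotsc,8\}$, and only then running the parity contradiction via the compatibility constraint $U^{(i)}_0=U^{(i-1)}_1$; that case analysis is the actual content of the sharpening from denominator $3$ to $6$, and your proposal does not supply a substitute for it.
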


The proof of Proposition~\ref{prop:brapk, floor case} is split into a series of technical lemmas. We start by fixing some notation which will be kept throughout Part II. 

\begin{notation}\label{not:x/3}
Let $a\in\mathbb{N}$ be such that $\tfrac{p}{2}<a<p$. We let $x:=a-\frac{p-1}{2}$, and we write 
$x=3q+\delta$ for some 
$q\in\mathbb{N}_0$ and $\delta\in\{0,1,2\}$. In particular we have $q=\lfloor\frac{x}{3} \rfloor=\lfloor\tfrac{2a-(p-1)}{6}\rfloor$.
\end{notation}

\begin{defn}
Denote by $A_\emptyset$ the $p$-abacus configuration for the empty partition $\emptyset$, such that $A_\emptyset$ has first gap in position $(0,0)$. 
We then define $\mathcal{Z}(a)$ to be the set of $p$-abaci $B$ such that $w(B)=a$ and $B^\uparrow=A_\emptyset$. 
It is clear by Theorem~\ref{thm:oddcriterion} that $\mathcal{Z}(a)$ is naturally in bijection with $\mathrm{Irr}_{p'}(\fS_{ap})$. 
\end{defn}

\begin{lem}\label{lem:Za}
Let $\lambda\vdash_{p'}ap$ and let $B\in\mathcal{Z}(a)$ be the $p$-abacus corresponding to $\lambda$. Then $$|\lambda^-_{p'}|=\sum_{i=1}^{p-1}\mathrm{Rem}(B_i)\quad \ \text{and}\quad \ \ br(ap)=\max_{B\in\mathcal{Z}(a)}\ \sum_{i=1}^{p-1} \operatorname{Rem}(B_i).$$
\end{lem}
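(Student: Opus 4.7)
The plan is to verify the first identity directly by showing that removable beads on runner $0$ of $B$ never give $p'$-partitions of $ap-1$, while removable beads on any runner $j\ne 0$ always do. The second identity will then follow from the natural bijection $\mathcal{Z}(a)\leftrightarrow\mathrm{Irr}_{p'}(\fS_{ap})$ and the first identity, by maximizing over $\lambda\vdash_{p'}ap$.

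First I would record that the bijection between $\mathcal{Z}(a)$ and $\mathrm{Irr}_{p'}(\fS_{ap})$ is an immediate consequence of Theorem~\ref{thm:oddcriterion} applied with $k=1$: since $ap\le ap<(a+1)p$, we have $\lambda\vdash_{p'}ap$ if and only if $C_p(\lambda)=\emptyset$ and $w_p(\lambda)=a$, which is the same as saying the $p$-abacus $B$ for $\lambda$ with $B^\uparrow=A_\emptyset$ lies in $\mathcal{Z}(a)$. Next, for $\mu\vdash ap-1$, Theorem~\ref{thm:oddcriterion} (valid since $(a-1)p\le ap-1<ap$) tells us that $\mu\vdash_{p'}ap-1$ if and only if $w_p(\mu)=a-1$ and $C_p(\mu)\vdash_{p'}p-1$. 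The latter condition is automatic, because $C_p(\mu)$ is then a partition of $|\mu|-p(a-1)=p-1<p$, and every irreducible character of $\fS_{p-1}$ has degree dividing $(p-1)!$, hence coprime to $p$. So the $p'$-ness of $\mu$ reduces to a single condition on the weight.

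The key computational step is an application of Lemma~\ref{lem:weightgeneral}. Since $B^\uparrow=A_\emptyset$, and $A_\emptyset$ has every row below $0$ full and every row at or above $0$ empty, all runners of $A_\emptyset$ (and therefore of $B$) have the same number of beads. Thus for a removable bead $c$ on runner $j$ of $B$, corresponding to $\mu\in\lambda^-$, Lemma~\ref{lem:weightgeneral} gives $w_p(\mu)=a+(|B_j|-|B_{j-1}|-1)=a-1$ when $j\ne 0$, so $\mu\vdash_{p'}ap-1$; whereas when $j=0$ it gives $w_p(\mu)=a+(|B_0|-|B_{p-1}|-2)=a-2\ne a-1$, so $\mu$ is not a $p'$-partition. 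Since distinct removable beads in $B$ yield distinct partitions in $\lambda^-$, this yields $|\lambda^-_{p'}|=\sum_{i=1}^{p-1}\mathrm{Rem}(B_i)$. The maximization identity then follows immediately from $br(ap)=\max\{|\lambda^-_{p'}|:\lambda\vdash_{p'}ap\}$ combined with the bijection above.

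I do not foresee a serious obstacle. The lemma can be viewed as the boundary case $\gamma=\emptyset$ of Corollary~\ref{cor:removable beads}, but $\mathcal{R}_{A_\gamma}$ as defined strictly requires $m\ge 1$, so the direct argument above is cleaner than attempting to stretch the corollary to cover the $m=0$ case.
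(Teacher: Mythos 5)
Your proof is correct and takes essentially the same route as the paper, whose entire proof is the citation of Lemma~\ref{lem:weightgeneral} and Theorem~\ref{thm:oddcriterion}; you have simply made explicit the details those citations compress. In particular, your observation that all runners of $B$ carry the same number of beads (because $B^\uparrow=A_\emptyset$), so that Lemma~\ref{lem:weightgeneral} yields $w_p(\mu)=a-1$ exactly when the removed bead lies off runner $0$, is precisely the intended argument.
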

\begin{proof}
The statement follows directly from Lemma~\ref{lem:weightgeneral} and Theorem~\ref{thm:oddcriterion}.
\end{proof}


\begin{lem}\label{lem:beta}
For $a\in\mathbb{N}$ such that $\tfrac{p}{2}<a<p$, we have $br(ap)\ge p-1+2q$.
\end{lem}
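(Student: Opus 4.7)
My plan is to construct an explicit $p$-abacus $B\in\mathcal{Z}(a)$ whose removable-bead count $\sum_{j=1}^{p-1}\mathrm{Rem}(B_j)$ equals $p-1+2q$. By Lemma~\ref{lem:Za} this will yield $br(ap)\ge p-1+2q$. I will build $B$ from $A_\emptyset$ by a prescribed sequence of down-moves totalling the target weight $a=\tfrac{p-1}{2}+3q+\delta$, organised into three phases.

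In the \emph{baseline phase}, for every odd $j\in\{1,3,\ldots,p-2\}$ I slide the top bead on runner $j$ from $(-1,j)$ down to $(0,j)$. This uses $\tfrac{p-1}{2}$ down-moves and produces $p-1$ removable beads on runners $1,\ldots,p-1$: the bead at $(0,j)$ on each odd $j$ (since runner $j-1$ is untouched, so $(0,j-1)$ is empty), and the bead at $(-1,j+1)$ on each even $j+1$ (since $(-1,j)$ has just been vacated).

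In the \emph{promotion phase}, I select $q$ of the odd runners $j_1,\ldots,j_q$ and on each $j_i$ perform $3$ further down-moves so that the positive-row beads on $j_i$ become $\{(0,j_i),(1,j_i)\}$ and $(-2,j_i)$ is vacated. Each promotion creates $2$ new removable beads, namely $(1,j_i)$ on runner $j_i$ and $(-2,j_i+1)$ on runner $j_i+1$, while the baseline removable beads on these two runners remain removable; the running total becomes $(p-1)+2q$ for weight $\tfrac{p-1}{2}+3q$. In the \emph{absorption phase}, I add the remaining $\delta\in\{0,1,2\}$ down-moves by sliding the $(0,j^\ast)$ bead on some non-promoted odd runner $j^\ast$ down to $(\delta,j^\ast)$; such a $j^\ast$ exists because $a<p$ forces $q\le\lfloor\tfrac{p+1}{6}\rfloor<\tfrac{p-1}{2}$ for any odd $p$. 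This step leaves the removable count unchanged: the removable bead on $j^\ast$ merely migrates from $(0,j^\ast)$ to $(\delta,j^\ast)$, and the one at $(-1,j^\ast+1)$ on runner $j^\ast+1$ is unaffected. Consequently $B^\uparrow=A_\emptyset$ and $w(B)=a$, so by Theorem~\ref{thm:oddcriterion} the partition $\lambda$ represented by $B$ lies in $\mathrm{Irr}_{p'}(\fS_{ap})$, and Lemma~\ref{lem:Za} delivers $|\lambda_{p'}^-|=p-1+2q$.

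The main obstacle will be the bookkeeping in the promotion phase: one has to verify both that the four single down-moves prescribed on each $j_i$ can be executed in some valid order (no step blocked by an already-placed bead), and that the local abacus configuration around each promoted pair $(j_i,j_i+1)$ contributes exactly two new removable beads without destroying any counted in the baseline. These are direct but somewhat intricate inspections on the abacus; once they are completed the inequality follows.
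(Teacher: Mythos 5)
Your construction is correct and is essentially the paper's own: the published abacus $B_\beta$ likewise starts from the weight-$1$ baseline on every odd runner and promotes $q$ of them to weight $4$, gaining two removable beads per promotion (one on the promoted runner, one on the even runner to its right), the only cosmetic difference being that the paper absorbs the leftover $\delta$ into the position of the second bead on runner $1$ rather than into a non-promoted runner. The concluding appeal to Theorem~\ref{thm:oddcriterion} and Lemma~\ref{lem:Za} is exactly the paper's argument, and the bookkeeping you defer does check out.
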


\begin{proof}
We exhibit a partition $\beta\vdash_{p'}ap$ such that $|\beta^-_{p'}|=p-1+2q$. If $\delta=0$ then let $\beta$ be the following partition of $ap$:
$$(p+2q,p+2q-1,\dotsc,p+q+1,p+q-1,\dotsc,q+1,q^{p-2q+1},q-1,\dotsc,2,1),$$ while if $\delta\ne 0$ then let $\beta$ be the following partition of $ap$: $$\big(p(\delta+1)+2,p+2q+1,p+2q,\dotsc,p+q+3,p+q-1,\dotsc,q+1,q^{p-2q+1},q-1,\dotsc,1\big).$$

We describe and depict below a $p$-abacus $B_\beta\in\mathcal{Z}(a)$ of $\beta$:

\smallskip

\noindent\textbf{-} For $j\in\{0,2,\dotsc,p-3,p-1\}$, runner $j$ has beads in positions $(x,j)$, for all $x\le -1$;

\noindent\textbf{-} Runner $1$ has beads in positions $(0,1)$, $(1+\delta,1)$ and $(y,1)$ for all $y\le -3$;

\noindent\textbf{-}  For $j\in\{3,5,\dotsc,2q-1\}$, runner $j$ has beads in positions $(0,j)$, $(1,j)$ and $(y,j)$ for all $y\le -3$;

\noindent\textbf{-} For $j\in\{2q+1,2q+3,\dotsc,p-2\}$ runner $j$ has beads in positions $(0,j)$ and $(y,j)$ for all $\le -2$.

{\footnotesize
\[B_\beta : \begin{array}{lcccccccccccccccc}
&& 0 & 1 & 2 & 3 & 4 & \cdots & 2q-2 & 2q-1 & 2q & 2q+1 & 2q+2 & \cdots & p-3 & p-2 & p-1\\
-3 && \times & \times & \times & \times & \times & \cdots & \times & \times & \times & \times & \times & \cdots & \times & \times & \times\\
-2 && \times & \circ & \times & \circ & \times & \cdots & \times & \circ & \times & \times & \times & \cdots & \times & \times & \times\\
-1 && \times & \circ & \times & \circ & \times & \cdots & \times & \circ & \times & \circ & \times & \cdots & \times & \circ & \times\\
0 && \circ & \times & \circ & \times & \circ & \cdots & \circ & \times & \circ & \times & \circ & \cdots & \circ & \times & \circ\\
1 && \circ & \circ & \circ & \times & \circ & \cdots & \circ & \times & \circ & \circ & \circ & \cdots & \circ & \circ & \circ\\
2 && \circ & \circ & \circ & \circ & \circ & \cdots & \circ & \circ & \circ & \circ & \circ & \cdots & \circ & \circ & \circ\\
\vdots && \vdots &&&&&&&&&&&&&& \vdots\\
1+\delta && \circ & \times & \circ & \circ & \circ & \cdots & \circ & \circ & \circ & \circ & \circ & \cdots & \circ & \circ & \circ\\
\end{array} \]
}

We observe from the above abacus configuration that $C_p(\beta)=\emptyset$, and hence by Theorem~\ref{thm:oddcriterion} we have that $\beta\vdash_{p'}ap$. Moreover by Lemma~\ref{lem:Za}, we have $\beta^-=\beta_{p'}^-$. Hence $br(ap)\ge |\beta_{p'}^-|=p-1+2q$.
\end{proof}

Thus it remains to show that for all $\lambda\vdash_{p'}ap$, we have $|\lambda_{p'}^-|\le p-1 + 2q$. 
In order to do this we introduce a new combinatorial object. 

\begin{defn}\label{def:doubled}
Let $T_\emptyset$ be the $2$-abacus configuration for the empty partition $\emptyset$ having first gap in position $(0,0)$. Let $U^{(0)}$, $U^{(1)}\ldots, U^{(p-1)}$ be $2$-abaci such that $(U^{(i)})^{\uparrow}=T_\emptyset$ for all $i\in\{0,1,\ldots, p-1\}$. If $w(U^{(0)})+w(U^{(1)})+\cdots +w(U^{(p-1)})=w\in\mathbb{N}_0$ then we call the sequence $\underline{U}=(U^{(0)},U^{(1)},\ldots, U^{(p-1)})$ a \textit{doubled $p$-abacus of weight $w$} (we write $w(\underline{U})=w$ in this case). 
Moreover we denote by $\mathcal{D}(w)$ the set of doubled $p$-abaci of weight $w$.

Finally, given any $w\in\mathbb{N}_0$ we let
$M(w)=\mathrm{max}\big\{\rho(\underline{U})\ |\ \underline{U}\in\mathcal{D}(w)\big\},$
where for any $\underline{U}\in\mathcal{D}(w)$ we define $\rho(\underline{U})$ as $$\rho(\underline{U})=\sum_{i=1}^{p-1}\mathrm{Rem}(U_1^{(i)}).$$
As usual, we denoted by $U^{(i)}_0$ (and $U_1^{(i)}$) the left (and right) runner of the $2$-abacus $U^{(i)}$.
\end{defn}
\begin{rem}\label{rem:doubled1}
Let $\lambda\vdash_{p'} ap$ and let $B\in\mathcal{Z}(a)$ correspond to $\lambda$. For $i\in\{1,\ldots, p-1\}$, let 
$U^{(i)}=(B_{i-1}, B_{i})$, and let $U^{(0)}=(B_{p-1},B_0)$. Then $\underline{U}:=(U^{(0)},U^{(1)},\ldots, U^{(p-1)})\in\mathcal{D}(2a)$ and 
$\rho(\underline{U})=|\lambda_{p'}^-|,$
by Lemma~\ref{lem:Za}.
With this in mind we define $\mathcal{D}(\mathcal{Z}(a))$ to be the subset of $\mathcal{D}(2a)$ of sequences 
$\underline{U}:=(U^{(0)},U^{(1)},\ldots, U^{(p-1)})$ such that $U^{(i)}_0=U^{(i-1)}_1$ for all $i\in\{0,1,\ldots, p-1\}$ (here two runners are equal if they coincide as $1$-abaci; that is, they have beads in exactly the same rows). 
Clearly the set $\mathcal{D}(\mathcal{Z}(a))$ is naturally in bijection with $\mathcal{Z}(a)$, via the construction described above. 
\end{rem}

\begin{lem}\label{lem:M}
Let $a$ and $x$ be as in Notation~\ref{not:x/3}. Then $br(ap)\le M(2a) = p-1 + \lfloor\tfrac{2x}{3}\rfloor$.
\end{lem}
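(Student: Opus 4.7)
The plan is to prove the two assertions $br(ap) \le M(2a)$ and $M(2a) = p - 1 + \lfloor 2x/3\rfloor$ in turn. The first inequality is essentially already encoded in Remark~\ref{rem:doubled1}: any $\lambda \vdash_{p'} ap$ corresponds to some $B \in \mathcal{Z}(a)$, which in turn produces a doubled $p$-abacus $\underline{U} \in \mathcal{D}(\mathcal{Z}(a)) \subseteq \mathcal{D}(2a)$ with $\rho(\underline{U}) = |\lambda_{p'}^-|$, so $|\lambda_{p'}^-| \le M(2a)$ and hence $br(ap) \le M(2a)$.

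For the equality $M(2a) = p - 1 + \lfloor 2x/3\rfloor$ I would establish matching upper and lower bounds. For the upper bound, Lemma~\ref{lem:T} applied with $\lambda = \emptyset$ to each $U^{(i)}$ gives $\mathrm{Rem}(U_1^{(i)}) \le \lfloor\sqrt{w(U^{(i)})}\rfloor$. The key auxiliary estimate is the elementary inequality $\lfloor\sqrt{w}\rfloor \le (w+2)/3$ for every integer $w \ge 0$: this is verified by direct computation for $w \in \{0,1,2,3\}$, and for $w \ge 4$ it follows from $(w-1)(w-4) \ge 0$, equivalently $9w \le (w+2)^2$. Summing over $i \in \{1,\dotsc,p-1\}$ and using $\sum_{i=0}^{p-1} w(U^{(i)}) = 2a$ together with $2a = 2x + p - 1$ gives
\[
\rho(\underline{U}) \;\le\; \sum_{i=1}^{p-1}\tfrac{w(U^{(i)}) + 2}{3} \;\le\; \tfrac{2a + 2(p-1)}{3} \;=\; p - 1 + \tfrac{2x}{3},
\]
and integrality of $\rho(\underline{U})$ then yields $\rho(\underline{U}) \le p - 1 + \lfloor 2x/3\rfloor$.

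For the matching lower bound I would exhibit an explicit $\underline{U} \in \mathcal{D}(2a)$: set $w(U^{(0)}) = 2x - 3\lfloor 2x/3\rfloor \in \{0,1,2\}$; for $\lfloor 2x/3\rfloor$ of the indices $i \in \{1,\dotsc,p-1\}$, take $w(U^{(i)}) = 4$ realising $\mathrm{Rem}(U_1^{(i)}) = 2$ (attainable by the equality case of Lemma~\ref{lem:T}); and for the remaining $p - 1 - \lfloor 2x/3\rfloor$ indices, take $w(U^{(i)}) = 1$ with $\mathrm{Rem}(U_1^{(i)}) = 1$. This is feasible because $x \le (p-1)/2$ forces $\lfloor 2x/3\rfloor \le \lfloor(p-1)/3\rfloor < p - 1$; a direct count verifies that the total weight is $2a$ and that $\rho(\underline{U}) = 2\lfloor 2x/3\rfloor + (p - 1 - \lfloor 2x/3\rfloor) = p - 1 + \lfloor 2x/3\rfloor$. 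The main obstacle is locating the linear inequality $\lfloor\sqrt{w}\rfloor \le (w+2)/3$, which is exactly sharp enough that aggregating across the $p - 1$ useful slots recovers the target $p - 1 + 2x/3$ with no slack; a cruder bound would fail to match the extremal configuration.
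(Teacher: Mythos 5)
Your proof is correct, and while it reduces to the same optimization problem as the paper does (via Remark~\ref{rem:doubled1} and Lemma~\ref{lem:T} with $\lambda=\emptyset$, giving $M(2a)=\max\{\sum_{i=1}^{p-1}\lfloor\sqrt{b_i}\rfloor : \sum b_i\le 2a\}$), you solve that optimization by a genuinely different and cleaner route. The paper proceeds by an exchange argument: it perturbs a maximal tuple $(b_1,\dotsc,b_{p-1})$ step by step (moving weight $4$ off any part $\ge 9$ onto a part $\le 1$, then topping up zero parts) to reach a normal form with all $b_i\in\{1,\dotsc,8\}$, and only then counts the number $t$ of parts with $\lfloor\sqrt{b_i}\rfloor=2$ to get $3t\le 2x$; this normalization relies on the inequalities $p<2a<2p$ to guarantee that suitable donor and recipient parts exist. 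Your concave-majorant inequality $\lfloor\sqrt{w}\rfloor\le(w+2)/3$, summed over the $p-1$ contributing slots and combined with integrality, delivers the upper bound in one line with no case analysis and no use of the constraints on $a$ (those enter only in your lower-bound construction, to ensure $\lfloor 2x/3\rfloor\le p-1$ slots suffice); your lower-bound configuration with weights $4$ and $1$ is essentially the same extremal example the paper uses, and correctly sits at the two points where your linear bound is tight. Both arguments are complete; yours trades the paper's combinatorial normalization for a single sharp analytic estimate, which is arguably easier to verify.
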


\begin{proof}

From Remark~\ref{rem:doubled1} it is immediate that $br(ap)\le M(2a)$, so it remains to prove $M(2a)=p-1 + \lfloor\tfrac{2x}{3}\rfloor$.

Let $\underline{U}=(U^{(0)},U^{(1)},\ldots, U^{(p-1)})\in\mathcal{D}(2a)$ be such that $\rho(\underline{U})=M(2a)$. Let $w_i=w(U^{(i)})$. Clearly $w_1+w_2+\cdots w_{p-1}\leq 2a$. Moreover, arguing as in the proof of Lemma~\ref{lem:T} we can assume that $w(U^{(i)}_1)=w_i$ and $w(U^{(i)}_0)=0$ for all $i\in\{1,\ldots, p-1\}$. From the maximality of $\rho(\underline{U})$ we deduce using Lemma~\ref{lem:T} (in the case $\lambda=\emptyset$) that 
$$\operatorname{Rem}(U^{(i)}_1) = \lfloor\sqrt{w_i}\rfloor$$ and hence
$$M(2a) = \max\left\{ \sum_{i=1}^{p-1}\lfloor \sqrt{b_i}\rfloor\ \middle|\ b_1+\cdots+b_{p-1}\le 2a\ \mathrm{and}\ b_i\in\mathbb{N}_0\ \ \forall\ 1\le i\le p-1 \right\}.$$


Let $b=(b_1,\dotsc,b_{p-1})$ be such that $b_i\in\mathbb{N}_0$ for all $i$, $\sum_i b_i\le 2a$ and $\sum_i\lfloor \sqrt{b_i}\rfloor = M(2a)$; we will call any $(p-1)$-tuple satisfying these conditions \emph{maximal}. If there exists $i$ such that $b_i\ge 9$, then there exists $j$ such that $b_j\le 1$. This follows since $\sum b_i\le 2a<2p$. 
Replacing $b_i$ by $b'_i=b_i-4$ and $b_j$ by $b_j'=b_j+4$ in $b$ we obtain a new maximal sequence $b'$. 
Hence we may assume without loss of generality that our maximal sequence $b$ has $b_i\le 8$ for all $i\in\{1,\ldots, p-1\}$. 

Now if there exists $i$ such that $b_i=0$ then there exists $j$ such that $b_j\ge 2$, because $2a>p$. In this case, replacing $b_i$ by $b_i'=1$ and $b_j'=b_j-1$ in $b$ we obtain a new maximal sequence $b'$. 
Hence we may further assume that $b$ has $b_i\ge 1$ for all $i\in\{1,\ldots, p-1\}$. The observations above show that without loss of generality we may assume
$$\lfloor\sqrt{b_1}\rfloor=\cdots=\lfloor\sqrt{b_t}\rfloor=2,\ \lfloor\sqrt{b_{t+1}}\rfloor=\cdots=\lfloor\sqrt{b_{p-1}}\rfloor=1,$$
for some $t\in\{0,\dotsc,p-1\}$. 

In particular, $b_i\in\{4,\dotsc,8\}$ for $i\in\{1,\dotsc,t\}$ and $b_j\in\{1,2,3\}$ for $j\in\{t+1,\dotsc,p-1\}$. Thus $4t+(p-1-t)\le \sum b_i\le 2a$, which gives $t\le \lfloor\tfrac{2x}{3}\rfloor$ since $t$ is an integer. This in turn implies that $M(2a)=2t+(p-1-t)\le p-1+\lfloor\tfrac{2x}{3}\rfloor$. 

Finally, equality holds because we can construct $\underline{U}\in\mathcal{D}(2a)$ such that $w(U^{(1)})=\cdots =w(U^{(t)})=4$, $w(U^{(t+1)})=\cdots =w(U^{(p-1)})=1$ and $w(U^{(0)})=2a-3t-(p-1)$,  where  $t=\lfloor\tfrac{2x}{3}\rfloor$, with $\mathrm{Rem}(U_1^{(j)})=2$ for $j\in\{1,\dotsc,t\}$ and $\mathrm{Rem}(U_1^{(j)})=1$ for $j\in\{t+1,\dotsc,p-1\}$. 
\end{proof}

Lemmas~\ref{lem:beta} and~\ref{lem:M} show that $p-1+2\lfloor\frac{x}{3}\rfloor\leq br(ap)\leq p-1+\lfloor\frac{2x}{3}\rfloor.$
In particular if $\delta\neq 2$ then we have that $\lfloor\tfrac{2x}{3}\rfloor=2q+ \lfloor\frac{2\delta}{3}\rfloor=2q=2\lfloor\tfrac{x}{3}\rfloor.$
In this case we have $br(ap)=M(2a)=p-1+2q$. To deal with the remaining case of $\delta=2$ where $p-1+2q\le br(ap)\le M(2a)=p-1+2q+1$, we have the following lemma. 

\begin{lem}\label{lem:delta=2}
Let $a\in\mathbb{N}$ be as in Notation~\ref{not:x/3} and suppose that $\delta=2$. 
Then $br(ap) \le M(2a)-1$.
\end{lem}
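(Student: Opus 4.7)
I would argue by contradiction: suppose there exists $B\in\mathcal{Z}(a)$ with $\sum_{i=1}^{p-1}\operatorname{Rem}(B_i) = M(2a) = p-1+2q+1$, and set $w_j := w(B_j)$ and $\sigma_i := w_{i-1}+w_i$ with indices taken modulo $p$, so that $\sum_j w_j = a$ and $\sum_i \sigma_i = 2a$. By Lemma~\ref{lem:T} (applied with $\lambda=\emptyset$ to the $2$-abacus $(B_{i-1},B_i)$), $\operatorname{Rem}(B_i)\le\lfloor\sqrt{\sigma_i}\rfloor$ for each $i\in\{1,\dotsc,p-1\}$, so the hypothesis forces $\sum_{i=1}^{p-1}\lfloor\sqrt{\sigma_i}\rfloor = M(2a)$. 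Rerunning the optimization in the proof of Lemma~\ref{lem:M} --- together with the observations that $\sigma_i=0$ or $\sigma_i\ge 9$ would each strictly reduce the attainable total below $M(2a)$ --- shows that exactly $2q+1$ of the indices $i\in\{1,\dotsc,p-1\}$ are ``heavy'' with $\sigma_i\in\{4,5\}$ and $\operatorname{Rem}(B_i)=2$, while the remaining $p-2-2q$ are ``light'' with $\sigma_i\in\{1,2\}$ and $\operatorname{Rem}(B_i)=1$. Moreover $\sigma_0\in\{0,1\}$, and writing $\tau$ for the total excess of the $\sigma_i$ ($i\ge 1$) above their minima, the weight identity collapses to $\sigma_0+\tau=1$.

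The core of the argument is an even/odd decomposition. Consider
\[
S_e := \sigma_0 + \sigma_2 + \sigma_4 + \cdots + \sigma_{p-1}.
\]
Since $p-1$ is even (as $p$ is odd), expanding each $\sigma_i = w_{i-1}+w_i$ in $S_e$ double-counts $w_{p-1}$ (which appears in both $\sigma_0$ and $\sigma_{p-1}$) while counting every other $w_j$ exactly once, giving $S_e = a + w_{p-1}$. On the other hand, letting $H_E$ denote the number of heavy indices in $\{2,4,\dotsc,p-1\}$ and $\tau_E\in\{0,1\}$ the excess contributed by those indices, a direct count gives
\[
S_e = \sigma_0 + 4H_E + \bigl(\tfrac{p-1}{2}-H_E\bigr) + \tau_E = \sigma_0 + 3H_E + \tfrac{p-1}{2} + \tau_E.
\]
Equating the two expressions for $S_e$ and using $a = \tfrac{p-1}{2}+3q+2$ yields
\[
3H_E = 3q + 2 + w_{p-1} - \sigma_0 - \tau_E,
\]
so reducing modulo $3$ demands $\sigma_0 + \tau_E - w_{p-1}\equiv 2\pmod 3$. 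However, the constraints $\sigma_0+\tau=1$ (with $\tau_E\le\tau$) and $w_{p-1}\le\sigma_0$ restrict $(\sigma_0,\tau_E,w_{p-1})$ to the four triples $(0,0,0)$, $(0,1,0)$, $(1,0,0)$, $(1,0,1)$, on which $\sigma_0+\tau_E-w_{p-1}\in\{0,1\}$ --- never congruent to $2$ modulo $3$. This contradiction yields $br(ap)\le M(2a)-1$.

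The main obstacle, I expect, will be the structural claim in the first paragraph: verifying that no $\sigma_i$ can equal $0$ or exceed $8$ in a max-achieving configuration when $\delta=2$. This requires a refined form of the optimization of Lemma~\ref{lem:M}: bounding $\sum\lfloor\sqrt{b_i}\rfloor$ over $p-2$ parts of total weight at most $2a$ (to rule out $\sigma_i=0$), and separately over $p-2$ parts of total weight at most $2a-9$ (to rule out $\sigma_i\ge 9$); in the arithmetic of $\delta=2$ both bounds come out to $M(2a)-1$, which pins $\sigma_i\in\{1,\dotsc,8\}$ and permits the clean case reduction above.
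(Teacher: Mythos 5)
Your argument is correct, and it reaches the contradiction by a genuinely different mechanism than the paper. Both proofs share the same set-up: reduce to a configuration achieving $M(2a)=p+2q$, and show the weights must consist of $2q+1$ ``heavy'' pairs with $\sigma_i\in\{4,5\}$, the rest ``light'' with $\sigma_i\in\{1,2\}$, total slack $\sigma_0+\tau=1$ (the paper's cases (i)--(iii) are exactly your three ways of distributing that single unit of slack). Where the paper then runs a \emph{local propagation} argument --- property $(\star)$ forces each heavy index to drag an adjacent index into being heavy, so the heavy indices pair off and $t=2q+1$ must be even --- you instead perform a \emph{global double count}: summing $\sigma_i=w_{i-1}+w_i$ over the even indices gives $a+w_{p-1}$ on one side and $\sigma_0+3H_E+\tfrac{p-1}{2}+\tau_E$ on the other, and reducing mod $3$ rules out every admissible triple $(\sigma_0,\tau_E,w_{p-1})$. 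Your route has two concrete advantages: it treats the paper's cases (i), (ii), (iii) uniformly (the paper only details case (i) and declares the others ``similar''), and it avoids the delicate sub-case $j=2$ of the propagation argument, which requires iterating the implication all the way around the cycle. Interestingly, the invariant you exploit ($3\mid 3H_E$, driven by $x\equiv 2\pmod 3$) is not the same as the paper's (parity of $t$), though both ultimately trace back to the arithmetic of $\delta=2$. One small caveat on the part you flagged as the ``main obstacle'': ruling out $\lfloor\sqrt{\sigma_i}\rfloor\ge 3$ by bounding the remaining $p-2$ parts against budget $2a-9$ handles $\sigma_i\in\{9,\dotsc,15\}$ cleanly, but for $\sigma_i\ge 16$ the big part contributes more than $3$ and you must redo the computation with budget $2a-K^2$ for $K=\lfloor\sqrt{\sigma_i}\rfloor$; this always comes out strictly below $M(2a)$ since $K^2$ outpaces $K$ (equivalently, use $k\le\tfrac{k^2+2}{3}$ with equality only for $k\in\{1,2\}$, which gives $\sum k_i\le\tfrac{2a+2(p-1)}{3}<p+2q+1$ and pins down the equality case in one stroke). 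With that patch the structural claim is airtight and the proof is complete.
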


\begin{proof}
From Remark~\ref{rem:doubled1} it is enough to show that if
$\underline{U}\in\mathcal{D}(2a)$ and $\rho(\underline{U})=M(2a)$, then $\underline{U}\notin\mathcal{D}(\mathcal{Z}(a)).$ To do this we will show that if $\rho(\underline{U})=M(2a)$ then there exists $i\in\{0,1,\ldots, p-1\}$, such that $U^{(i)}_0\neq U^{(i-1)}_1$.

For $i\in\{0,1,\ldots, p-1\}$, let $b_i=w(U^{(i)})$. Arguing as in the proof of Lemma~\ref{lem:M} we see that $\rho(\underline{U})=\sum_{i=1}^{p-1}\lfloor\sqrt{b_i}\rfloor$.
Moreover, given any composition $\underline{w}=(w_1,\ldots, w_{p-1})$ such that $w_1+\cdots + w_{p-1}\leq 2a$ there exists $\underline{V}\in\mathcal{D}(2a)$ such that $w(V^{i})=w_i$ for all $i\in\{1,\ldots, p-1\}$, $w(V^{0})=2a-(w_1+\cdots+w_{p-1})$ and such that $\rho(\underline{V})=\sum_{i=1}^{p-1}\lfloor\sqrt{w_i}\rfloor$. 

Let $\underline{b}=(b_1,\ldots, b_{p-1})$ and suppose that $b_i\ge 9$, for some $i\in\{1,\ldots p-1\}.$
\begin{itemize}
\item[-] If there exists $j$ such that $b_j=0$, then replacing $(b_i,b_j)$ by $(b'_i,b'_j):=(b_i-4,4)$
in $\underline{b}$ we obtain a new composition $\underline{b'}$ such that $\sum_{i=1}^{p-1}\lfloor\sqrt{w_i}\rfloor >\rho(\underline{U})$. This clearly contradicts the maximality of $\rho(\underline{U})$. 
\item[-] If $b_i \ge 10$, then there exists $j\ne l$ such that $b_j=b_l=1$ since $a<p$. 
But then we may replace $(b_i,b_j,b_l)$ by $(b_i-6,4,4)$ in $\underline{b}$ to obtain a similar contradiction as before. 
\item[-] If there exists $i'\ne i$ such that $b_{i'}\ge 9$, then as above we deduce that $b_{i'}=9$. In particular, $2a\ge 18$ so $p>3$. Since $a<p$, there exist distinct $j,j',j''$ such that $b_j=b_{j'}=b_{j''}=1$. 
But then we may replace $(9,9,1,1,1)$ by $(5,4,4,4,4)$ in $\underline{b}$ to obtain a contradiction.
\end{itemize}
The above observations show that if $b_i=9$ for some $i\in\{1,\ldots, p-1\}$ then there exists $t\in\{0,1,\ldots, p-2\}$ such that $\underline{b}$ has $t$ parts satisfying $\lfloor\sqrt{b_j}\rfloor=2$ and $p-2-t$ parts satisfying $\lfloor\sqrt{b_j}\rfloor=1$. Hence $M(2a)=3+2t+(p-2-t)=p-1+\lfloor\tfrac{2x}{3}\rfloor=p-1+2q+1,$ so $t=2q-1$. But this implies that $$2a \ge \sum_{m=1}^{p-1} b_m \ge 9 + 4t + (p-2-t) = p-1+6q+5.$$ Therefore we have $6q+5\le 2a-(p-1) = 2x = 6q+4$, a contradiction. Thus, $\lfloor\sqrt{b_i}\rfloor\in\{0,1,2\}$ for each $i\in\{1,\dotsc,p-1\}$.

So suppose there are $t$ values of $i$ for which $\lfloor\sqrt{b_i}\rfloor=2$, $s$ values for which it is 1, and $p-1-s-t$ values for which it is 0. Then $$p+2q = M(2a) = 2t+s \le p-1+t,$$
so $t\ge 2q+1$. In particular $t\ge 1$, so there exists $i$ with $\lfloor\sqrt{b_i}\rfloor=2$. If there exists $j\ne l$ such that $b_j=b_l=0$, then we may replace $(b_i,b_j,b_l)$ by $(b_i-2,1,1)$ in $\underline{b}$ to obtain a contradiction to the maximality of $\rho(\underline{U})$. So there is at most one $b_j=0$ and thus $s+t\in\{p-2,p-1\}$. 

If $s+t=p-2$, then $p+2q=M(2a)=2t+s$ implies $t=2q+2$, and so 
$$6q+4-b_0 = 2x-b_0= \sum_{m=1}^{p-1}b_m-(p-1)\ge 4t+s-(p-1)=6q+5,$$
which is a contradiction. Thus $s+t=p-1$ and $t=2q+1$. Since
$$6q+4-b_0=\sum_{m=1}^{p-1}b_m-(p-1) \ge 4t+s-(p-1)=6q+3,$$
one of the following must hold:
\begin{itemize}
\item[(i)] $|\{i:b_i=4\}|=t$, $|\{i:b_i=1\}|=s$ and $b_0=1$; or
\item[(ii)] $|\{i:b_i=4\}|=t-1$, $|\{i:b_i=5\}|=1$, $|\{i:b_i=1\}|=s$ and $b_0=0$; or
\item[(iii)] $|\{i:b_i=4\}|=t$, $|\{i:b_i=2\}|=1$, $|\{i:b_i=1\}|=s-1$ and $b_0=0$.
\end{itemize}

Now, suppose for a contradiction that $\underline{U}\in\mathcal{D}(\mathcal{Z}(a))$. Then we have that the bead configurations on $U^{(i-1)}_1$ and $U^{(i)}_0$ are equal for all $j$; call this property $(\star)$. The key in the following will be that $t= |\{i:b_i\ge 4\}| = 2q+1$ is odd.

In case (i), let $i\in\{1,\ldots, p-1\}$ be such that $b_i=4$. Then $(w(U^{(i)}_0), w(U^{(i)}_1))=(j,4-j)$ for some $0\le j\le 4$. If $j=2$ then $(\star)$ would imply $b_{i+1}\ge 2$, and hence $b_{i+1}=4$. Moreover it also forces $w(U^{(i+1)}_0)=w(U^{(i+1)}_1)=2$. We can iterate this argument to deduce that $w(U^{(y)}_0)=w(U^{(y)}_1)=2$ for all $y\in\{0,1,\ldots, p-1\}$, which gives a contradiction. Thus $j\in\{0,1,3,4\}$.

If $j=0$, then $w(U^{(i)}_1)=4$, so $(\star)$ implies that $w(U^{(i+1)}_0)=4$ and hence $b_{i+1}=4$ also. 
Similarly if $j=1$, then $w(U^{(i+1)}_0)=3$ and hence $b_{i+1}=4$. 
On the other hand, if $j=3$ or $j=4$ then similarly we deduce that $b_{i-1}=4$. These observations imply that $t$ is an even natural number (because if $j\in\{0,1\}$ then we may pair off $i$ and $i+1$ where $b_i=b_{i+1}=4$, and if $j\in\{3,4\}$ then we may pair off $i$ and $i-1$ where $b_i=b_{i-1}=4$). This gives a contradiction, and so $\underline{U}\notin\mathcal{D}(\mathcal{Z}(a))$, as desired. The analyses of cases (ii) and (iii) are similar.
%
\end{proof}

Thus when $\delta=2$ we also have that $br(ap)=p-1+2\lfloor\tfrac{x}{3}\rfloor$, by Lemmas~\ref{lem:beta}, Lemma~\ref{lem:M} and~\ref{lem:delta=2}. This proves Proposition~\ref{prop:brapk, floor case}.

\begin{proof}[Proof of Theorem B]
This follows directly from Propositions~\ref{prop:k=0},~\ref{prop:brapk, 2a case} and~\ref{prop:brapk, floor case}.
\end{proof}

We devote the final part of this section to the description of the set $\mathcal{E}_{ap}$ for any $\tfrac{p}{2}<a<p$.

\begin{prop}\label{prop:consec, floor case}
Let $a\in\mathbb{N}$ be such that $\tfrac{p}{2}<a<p$. Then $\mathcal{E}_{ap}=\{1,2,\dotsc,br(ap)\}$.
\end{prop}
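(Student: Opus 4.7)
The strategy mirrors that of Proposition~\ref{prop:consec, 2a case}: having already exhibited in Lemma~\ref{lem:beta} a $p'$-partition $\beta$ of $ap$ with $|\beta^-_{p'}| = br(ap) = p-1+2q$, the task reduces to constructing, for every target value $m \in \{1,2,\ldots,p-2+2q\}$, a partition $\lambda \vdash_{p'} ap$ with $|\lambda^-_{p'}| = m$. Throughout, I will work directly with $p$-abaci in $\mathcal{Z}(a)$, using Lemma~\ref{lem:Za} to compute $|\lambda^-_{p'}| = \sum_{i=1}^{p-1}\operatorname{Rem}(B_i)$, and verifying via Theorem~\ref{thm:oddcriterion} that each construction is in $\mathcal{Z}(a)$ by checking $C_p(\lambda)=\emptyset$ and $w_p(\lambda)=a$.

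I plan to start from the abacus $B_\beta$ of Lemma~\ref{lem:beta} and perform a family of local modifications. Recall that in $B_\beta$, the ``loaded'' odd runners (beads at $(0,j)$, $(1+\delta,j)$ and $y \le -3$) each contribute $4$ to the sum $\sum_{i=1}^{p-1}\operatorname{Rem}(B_i)$: two via the two removable beads of the loaded runner itself, and two more via the now-unblocked beads $(-1,j+1)$ and $(-2,j+1)$ of the adjacent default even runner. Similarly, each ``shifted'' odd runner contributes $2$. My first family of modifications converts loaded runners into shifted ones (or shifted ones into default), transferring the freed weight onto either runner $0$ or onto existing odd runners by pushing their beads further down — each such symmetric conversion changes the total Rem by an even amount, realising every value in $\{p-1+2q, p-3+2q, \ldots\}$ down to $p-1$ (or $p-2$ parity-permitting).

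To interpolate between consecutive even values, I will introduce asymmetric single-bead surgeries on an odd runner: for instance, moving only one of the two ``loaded'' beads down while leaving the other in place, so that the corresponding even runner's upper removable bead becomes blocked but its lower one is preserved (or vice versa). Each such modification alters the total Rem by exactly one, which when combined with the symmetric modifications above produces every integer value in $\{1,\ldots,p-1+2q\}$. For small $m$, one uses fewer loaded/shifted odd runners and absorbs the excess weight onto runner $0$, whose removable beads do not contribute to $|\lambda^-_{p'}|$ (since removal from runner $0$ yields $w_p(\mu) = a-2$ by Lemma~\ref{lem:weightgeneral}, so $\mu$ is not a $p'$-partition of $ap-1$).

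The main obstacle is bookkeeping. Each surgery must preserve $|B_j|$ for every $j$ (so that $C_p(\lambda)=\emptyset$) and keep the total weight equal to $a$ (so that $w_p(\lambda)=a$), which forces most individual bead-moves to be paired with a compensating move elsewhere on the abacus. Moreover, each of the $m$ removable beads being counted must be verified to yield a partition $\mu \vdash ap-1$ with $w_p(\mu)=a-1$, for which Lemma~\ref{lem:weightgeneral} reduces to checking the local bead-count differences between consecutive runners. The case $\delta=2$ needs extra attention: the naive upper bound $M(2a)=p+2q$ is one above $br(ap)$, so the extremal configurations ruled out by Lemma~\ref{lem:delta=2} must be avoided, but since every target $m$ we wish to realise satisfies $m \le p-1+2q$, the forbidden configurations never arise in the construction.
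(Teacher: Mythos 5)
Your overall strategy --- start from the extremal abacus $B_\beta$ of Lemma~\ref{lem:beta} and degrade it by local surgeries, computing $|\lambda^-_{p'}|=\sum_{i=1}^{p-1}\operatorname{Rem}(B_i)$ via Lemma~\ref{lem:Za} --- is the paper's, and your ``symmetric conversions'' realising the even values are in substance the paper's configurations $B(i)$ and $A(i)$ (return loaded/shifted odd runners to default position two units of $\operatorname{Rem}$ at a time, absorbing the freed weight by sliding the bead $b$ on runner $1$ further down, a move which never alters the count). The genuine gap is the odd values. Your proposed mechanism --- an ``asymmetric single-bead surgery'' on an odd runner, arranged so that exactly one removable bead of the adjacent even runner is blocked --- cannot change the total by an odd amount. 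Indeed, every runner of an abacus in $\mathcal{Z}(a)$ carries the same number $N$ of beads, so $\operatorname{Rem}(B_j)=N-|S_j\cap S_{j-1}|$, where $S_j$ denotes the set of rows occupied on runner $j$; since $p-1$ is even,
$$\sum_{j=1}^{p-1}\operatorname{Rem}(B_j)\ \equiv\ \sum_{j=1}^{p-1}|S_j\cap S_{j-1}|\ (\operatorname{mod}\ 2),$$
and a modification confined to a single odd runner $j$ whose two neighbours satisfy $S_{j-1}=S_{j+1}$ changes $|S_j\cap S_{j-1}|+|S_{j+1}\cap S_j|=2|S_j\cap S_{j-1}|$, hence changes the total by an even amount. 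In $B_\beta$ and in all of your symmetric degradations of it, every even runner (including runner $0$) is in default position, so every odd runner has identical neighbours and no such surgery can reach $br(ap)-1$.

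Breaking the parity requires touching a runner whose two relevant neighbours differ, or exploiting the asymmetry at runner $0$, whose own removable beads are excluded from the sum. The paper does the latter: it forms $C$ from $B_\beta$ by sliding the bead at $(-1,0)$ down to $(0,0)$ and the bead at $(0,1)$ up to $(-1,1)$ (and, when $q=0$, instead swaps runners $0$ and $1$); this keeps $\operatorname{Rem}(B_1)=2$ but blocks exactly one removable bead on runner $2$, yielding $br(ap)-1$, after which the even-step machinery is rerun and the value $1$ comes from the partition $(ap)$ itself. Your plan needs this (or an equivalent) additional idea; the weight-compensation bookkeeping you flag is real but comparatively routine, as the paper resolves it by pairing every up-move with a further down-move of the bead $b$.
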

\begin{proof}
Let $\beta\vdash_{p'}ap$ with $p$-abacus configuration $B:=B_\beta$ be as defined in Lemma~\ref{lem:beta}. 
In particular we proved that $|\beta^-_{p'}|=br(ap)=p-1+2q$, with notation as in Notation~\ref{not:x/3}.

Denote by $b$ the bead in position $(1+\delta,1)$ of $B$. 
For $i\in\{1,2,\ldots, \frac{p-1}{2}\}$ let $c_i$ be the bead in position $(0, p-2i)$ of $B$ and let $B(i)$ be the $p$-abacus configuration obtained from $B$ by sliding $b$ down to position $(1+\delta+i, 1)$ and by sliding $c_j$ up to position $(-1, p-2j)$ for all $j\in\{1,\ldots, i\}$. Let $\mu(i)\vdash ap$ be the partition corresponding to the $p$-abacus configuration $B(i)$.
From Theorem~\ref{thm:oddcriterion} we have that $\mu(i)\vdash_{p'}ap$ and $|\mu(i)_{p'}^-|=|\beta_{p'}^-|-2i$. It follows that $$ \{2q, 2q+2, \cdots,br(ap)-2,br(ap)\}\subseteq \mathcal{E}_{ap}.$$

Now let $A:=B(\frac{p-1}{2})$. For $i\in\{1,2,\ldots, q-1\}$ let $A(i)$ be the $p$-abacus configuration obtained from $A$ by sliding down bead $b$ from position $(1+\delta+\frac{p-1}{2}, 1)$ to position  
$(1+\delta+\frac{p-1}{2}+3i, 1)$ and by replacing runner $A_{2j+1}$ with $A_{2j+1}^{\uparrow}$ for all $j\in\{1,\ldots, i\}$. (For convenience, this step is depicted below.)

{\footnotesize
\[  \begin{array}{lcccc}
&& 2j\ \ & 2j+1 & 2j+2\\
-2 && \times & \circ & \times\\
-1 && \times & \times & \times\\
0 && \circ & \circ & \circ\\
1 && \circ & \times & \circ\\
\end{array}
\qquad\qquad\longrightarrow\qquad\qquad
\begin{array}{lcccc}
&& 2j\ \ & 2j+1 & 2j+2\\
-2 && \times & \times & \times\\
-1 && \times & \times & \times\\
0 && \circ & \circ & \circ\\
1 && \circ & \circ & \circ\\
\end{array}
\]
}

Let $\nu(i)\vdash ap$ be the partition corresponding to the $p$-abacus configuration $A(i)$. 
Since $w(A_{2i+1})=3$ for all $i\in\{1,2,\ldots, q-1\}$, it follows from Theorem~\ref{thm:oddcriterion} that $\nu(i)\vdash_{p'}ap$ and 
$|\nu(i)^-_{p'}|=|\mu(\frac{p-1}{2})^-_{p'}|-2i$. Thus $\{2, 4, 6, \cdots, 2q-2 \}\subseteq \mathcal{E}_{ap}$, and so it remains to show $\{1,3,\dotsc,br(ap)-1\}\subseteq \mathcal{E}_{ap}$. 

\medskip

First suppose $q\ge 1$. Consider the $p$-abacus configuration $C$ obtained from $B$ by sliding down the bead in position $(-1,0)$ to position $(0,0)$ and by sliding up the bead in position $(0,1)$ to position $(-1,1)$. 


Let $\gamma$ be the partition corresponding to $C$.  It is easy to see that $\gamma\vdash_{p'}ap$ and that $|\gamma^-_{p'}|=br(ap)-1$. We can now repeat the strategy used above to see that $\{3,5,\dotsc,br(ap)-1\}\subseteq \mathcal{E}_{ap}$. Of course, $1\in \mathcal{E}_{ap}$ by considering the trivial partition $(ap)\vdash_{p'}ap$.

If $q=0$ we begin with the $p$-abacus configuration $C'$ obtained from $B$ by swapping runners 0 and 1, instead of $C$.
The same argument then shows $\{1,3,\dotsc,br(ap)-1\}\subseteq \mathcal{E}_{ap}$.
\end{proof}

We conclude by observing that Propositions~\ref{prop:k=0},~\ref{prop:consec, 2a case} and~\ref{prop:consec, floor case} together prove Theorem~\ref{thm:T-apk}.

\medskip

\subsection*{Acknowledgments}
We thank Jason Long for suggesting Definition \ref{def:doubled} and for many useful conversations on this topic.



\end{document}